\newtheorem{theorem}{Theorem}[section]
\newtheorem{lemma}[theorem]{Lemma}
\newtheorem{corollary}[theorem]{Corollary}
\newtheorem{conjecture}[theorem]{Conjecture}
\newtheorem{definition}[theorem]{Definition}
\newtheorem*{definition*}{Definition}
\newtheorem{problem}[theorem]{Problem}
\newtheorem{question}[theorem]{Question}
\def\e{\epsilon}
\def\E{\mathbb{E}}
\def\R{\mathbb{R}}
\def\N{\mathbb{N}}
\def\P{\mathbb{P}}
\def\F{\mathbb{F}}
\def\a{\alpha}
\def\be{\beta}
\def\g{\gamma}
\def\d{\delta}
\def\b1{\mathbbm{1}}
\def\AA{\b1_A}
\def\BB{\b1_B}
\def \hcA {{\widehat {\b1_{\mathcal A}}}}
\def \hC {{\hat C}}
\def \cA{\mathcal A}
\def \cB{\mathcal B}
\def \cC{\mathcal C}
\def \cD{\mathcal D}
\def \cG{\mathcal G}
\def \cW{\mathcal W}
\def \codim{{\rm codim}}
\def \mn{\mathrm{M}_{m,n}(\F_2)}
\def \s{\subset}
\begin{document}

\title{Subsets of Cayley graphs that induce many edges}

\author{W.T. Gowers}
\address{Department of Pure Mathematics and Mathematical Statistics, Wilberforce Road, Cambridge CB3~0WB, UK.} 
\email[W. T. Gowers]{w.t.gowers@dpmms.cam.ac.uk}
\thanks{The first author is supported by a Royal Society 2010 Anniversary Research Professorship. He also held a Research Chair of the Fondation Sciences Math\'ematiques de Paris while this research was carried out.}

%\author{W. T. Gowers}
%\address{W. T. Gowers: Department of Pure Mathematics and Mathematical Statistics, 
%Wilberforce Road, Cambridge CB3 0WB, UK.} 
%\email{W. T. Gowers: wtg10@dpmms.cam.ac.uk}
%\date{}

\author{O. Janzer}
%\address[O. Janzer]{Department of Pure Mathematics and Mathematical Statistics, University of Cambridge.}
\email[O. Janzer]{oj224@cam.ac.uk}

\maketitle

\begin{abstract}
%Let $G$ be a regular graph of degree $d$ and let $A\subset V(G)$. Say that $A$ is \emph{$\eta$-closed} if the average degree of the subgraph induced by $A$ is at least $\eta d$. This says that if we choose a random vertex $x\in A$ and a random neighbour $y$ of $x$, then the probability that $y\in A$ is at least $\eta$. The work of this paper was motivated by an attempt to complete a solution of the 2-to-2 games conjecture, which had been reduced to proving a statement about  closed subsets of the Cayley graph $\Gamma$ whose vertices are $n\times n$ matrices over $\F_2$ with two vertices joined by an edge if their difference has rank 1. We identify a statement about  $\eta$-closed sets in Cayley graphs on arbitrary finite Abelian groups that implies the result needed for $\Gamma$ and can be considered as a ``highly asymmetric Balog-Szemer\'edi-Gowers theorem". Unfortunately, we also prove that this more general statement is false, but the proof of the implication is based on a lemma of independent interest. While we were working on the problem, the result for the specific graph $\Gamma$ was proved by Khot, Minzer and Safra, a breakthrough that completed the proof of the 2-to-2 conjecture.
Let $G$ be a regular graph of degree $d$ and let $A\subset V(G)$. Say that $A$ is \emph{$\eta$-closed} if the average degree of the subgraph induced by $A$ is at least $\eta d$. This says that if we choose a random vertex $x\in A$ and a random neighbour $y$ of $x$, then the probability that $y\in A$ is at least $\eta$. The work of this paper was motivated by an attempt to obtain a qualitative description of closed subsets of the Cayley graph $\Gamma$ whose vertex set is $\F_2^{n_1}\otimes \dots \otimes \F_2^{n_d}$ with two vertices joined by an edge if their difference is of the form $u_1\otimes \cdots \otimes u_d$. For the matrix case (that is, when $d=2$), such a description was obtained by Khot, Minzer and Safra, a breakthrough that completed the proof of the 2-to-2 conjecture. In this paper, we formulate a conjecture for higher dimensions, and prove it in an important special case. Also, we identify a statement about $\eta$-closed sets in Cayley graphs on arbitrary finite Abelian groups that implies the conjecture and can be considered as a ``highly asymmetric Balog-Szemer\'edi-Gowers theorem" when it holds. We conclude the paper by showing that this statement is not true for an arbitrary Cayley graph. It remains to decide whether the statement can be proved for the Cayley graph $\Gamma$.
\end{abstract}

\section{Introduction}

The Unique Games Conjecture, formulated by Khot \cite{khot} in 2002, is a central conjecture in theoretical computer science. If true, it implies that for a wide class of natural problems it is NP-hard to find even a very crude approximate solution in polynomial time. Recently, a weakening of the conjecture known as the 2-to-2 Games Conjecture, where the approximation is required to be less crude (so it is easier to prove hardness) was proved by Khot, Minzer and Safra \cite{KMS}, a result that is considered as a major step towards the Unique Games Conjecture itself. More precisely, after work by various authors, the problem had been reduced to a statement about a certain Cayley graph, and Khot, Minzer and Safra proved that statement.

The Cayley graph $\Gamma$ in question has as its vertex set the set of all $m\times n$ matrices over $\F_2$, with two vertices joined by an edge if their difference has rank 1. Let us say that a subset $\mathcal A\subset\mn$ is \emph{$\eta$-closed} if the probability that $A+B\in\cA$, when $A$ is chosen uniformly from $\cA$ and $B$ is chosen uniformly from all rank-1 matrices, is at least $\eta$. In graph terms, this is the probability that a random neighbour of a random point in $\cA$ is itself in $\cA$. 

A simple example of an $\eta$-closed set is the set $\{A\in\mn:Ax=y\}$, for some pair of vectors $x\in \F_2^n,y\in\F_2^m$. Indeed, if $Ax=y$ and $B$ is a random matrix of rank 1, then $x\in\ker B$ with probability roughly 1/2. %(We can make the probability exactly 1/2 if we slightly modify the definition of the graph to allow loops, or equivalently if we add the zero matrix to our generating set.) 
But if $x\in\ker B$, then $(A+B)x=y$, so $A+B\in\cA$ as well. A very similar, but distinct, example is the set $\{A\in\mn:A^Tx=y\}$. Let us call sets of one of these two kinds \emph{basic sets}.

We can form further examples by taking intersections of a small number of basic sets. For example, if $x_1,\dots,x_k$ are linearly independent and we take a set of the form 
\[\{A\in\mn:Ax_1=y_1,\dots,Ax_k=y_k\},\] 
then with probability approximately $2^{-k}$ each $x_i$ belongs to $\ker B$, so for any $A$ in the set, $A+B$ belongs to the set with probability approximately $2^{-k}$. The result of Khot, Minzer and Safra is the following.

\begin{theorem} \label{KMS}
For every $\eta>0$ there exist $\delta>0$ and a positive integer $k$ such that if $\cA$ is any  $\eta$-closed subset of $\mn$, then there exists an intersection $\cC$ of at most $k$ basic sets such that $|\cA\cap \cC|\geq\d|\cC|$ and $\cC\neq \emptyset$.
\end{theorem}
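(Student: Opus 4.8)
The plan is to transfer the problem to the Grassmann graph, where the basic sets turn into the familiar ``zoom-in'' and ``zoom-out'' sets, and then to derive the statement from a near-perfect expansion property of that graph. Identify each matrix $A\in\mn$ with the $n$-dimensional subspace $V_A=\{(x,Ax):x\in\F_2^n\}$ of $\F_2^{n+m}$. Since $\dim(V_A\cap V_{A'})=n-\rank(A-A')$, two matrices differ by a rank-$1$ matrix exactly when the corresponding subspaces meet in codimension $1$, so $\Gamma$ is, up to restricting to the ``transverse'' part (a bounded fraction of everything), the induced subgraph of the Grassmann graph $\mathrm{Gr}(\F_2^{n+m},n)$. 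Under this dictionary $\{A:Ax=y\}$ becomes the zoom-in $\{V:(x,y)\in V\}$ and $\{A:A^Tx=y\}$ becomes the zoom-out $\{V:V\s W\}$ with $W$ a fixed hyperplane, so an intersection of at most $k$ basic sets becomes the intersection of a zoom-in onto an at most $k$-dimensional subspace with a zoom-out to a subspace of codimension at most $k$. It thus suffices to show: every subset $\cA$ of the Grassmann graph in which almost every vertex sends an $\eta$-fraction of its edges back into $\cA$ occupies a $\d(\eta)$-fraction of some zoom-in/zoom-out of complexity at most $k(\eta)$.

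I would first convert closedness into a spectral statement. Writing $f=\b1_\cA$, with $\a$ its density and $M$ the normalized adjacency operator, $\eta$-closedness says $\langle f,Mf\rangle\geq\eta\a$. The normalized eigenvalues of $M$ are, up to lower-order corrections, $2^{-j}$ on the $j$-th level of the Grassmann scheme ($j=0,1,2,\dots$; level $j$ matches rank-$j$ characters in the matrix picture), so with $P_j$ the projection onto level $j$ we get $\langle f,Mf\rangle=\sum_j(2^{-j}+o(1))\|P_jf\|_2^2$, in which the level-$0$ term is exactly $\a^2$ and, since $\sum_j\|P_jf\|_2^2=\a$ by Parseval, the tail $\sum_{j>k}$ contributes $O(2^{-k})\a$. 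Hence either $\a\geq\eta/2$, in which case the whole space (an intersection of no basic sets) serves as $\cC$, or, taking $k=k(\eta)$ large enough, $\sum_{1\leq j\leq k}\|P_jf\|_2^2\gtrsim_{\eta}\a$: the indicator of $\cA$ carries non-negligible $\ell^2$-mass on the first few non-trivial levels.

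The heart of the matter — and where essentially all the difficulty lies — is to deduce from this that $\cA$ is actually dense on a bounded-complexity zoom-in/zoom-out. One cannot simply extract a single large Fourier coefficient, because the level spaces are enormous and the mass may be spread over exponentially many coefficients that are individually tiny; overcoming exactly this is what makes the theorem hard, and I would do it via a \emph{global hypercontractive inequality} for the Grassmann scheme. Call $\cA$ \emph{global} if it meets every zoom-in/zoom-out of complexity at most $t$ in at most a $\d_1$-fraction. For global $f$ one shows, using global hypercontractivity (schematically, $\|P_jf\|_2^2=\langle f,P_jf\rangle\leq\|f\|_{4/3}\|P_jf\|_4\lesssim_{j,t,\d_1}\a^{3/4}\|P_jf\|_2$, whence $\|P_jf\|_2^2=O_{j,t,\d_1}(\a^{3/2})$, negligible beside $\a$ for each fixed $j\geq1$ when $\a$ is small), that $\langle f,Mf\rangle\leq\a^2+\e(t,\d_1)\a$ with $\e\to0$ as $t\to\infty$ and $\d_1\to0$ — that is, global sets expand almost perfectly. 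Combined with the previous paragraph this is a contradiction once $\a<\eta/2$ and $\e<\eta/2$, so $\cA$ cannot be global: it is $\d_1$-dense on some zoom-in/zoom-out of complexity at most $t$, which translates back to an intersection of at most $k(\eta)$ basic sets of density at least $\d(\eta)$.

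The main obstacle is proving the global hypercontractive inequality. Ordinary hypercontractivity fails badly on the Grassmann scheme — the zoom-in and zoom-out functions are precisely the obstructions — so one has to quarantine those functions, for instance by an induction on the level $k$ in which, at each stage, one either locates a zoom-in or zoom-out on which the current function is dense (and stops) or uses globality to restrict to a product-like substructure where a Bonami--Beckner-type estimate applies; controlling the loss in the constants through this induction so that they depend on $\eta$ alone is the delicate point, and constitutes the bulk of the argument of Khot, Minzer and Safra together with the earlier work it builds on.
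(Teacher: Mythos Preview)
This theorem is not proved in the present paper at all: it is \emph{stated} as the result of Khot, Minzer and Safra and attributed to \cite{KMS}, and the paper then builds on it (and on the analogous higher-dimensional question) rather than re-deriving it. So there is no ``paper's own proof'' to compare your proposal against; the paper's technical work goes into Theorem~\ref{mainresult}, which concerns $(\cB',1-\e)$-closed sets for $\e$ small, a genuinely different hypothesis.

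As for your sketch on its own merits: the translation to the Grassmann graph, the identification of basic sets with zoom-ins and zoom-outs, the spectral decomposition showing that $\eta$-closedness forces mass on the low levels, and the dichotomy ``global sets expand almost perfectly, hence a non-expanding set must be dense on some low-complexity zoom'' are all correct and are indeed the architecture of the Khot--Minzer--Safra argument (the global-hypercontractivity phrasing you use is closer to later reformulations, but the content is the same). Two small points of care: the map $A\mapsto V_A$ lands only on subspaces transverse to $\{0\}\times\F_2^m$, so one has to check that restricting to this positive-density slice of the Grassmann graph does not distort the expansion or the zoom structure; and the translation of $\{A:A^Tx=y\}$ is a zoom-out only after a short computation (it is the condition that $V_A$ lie in the hyperplane $\{(u,v):\langle u,y\rangle=\langle v,x\rangle\}$). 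The substantive gap is the one you yourself flag: you do not prove the global hypercontractive inequality, and that step is essentially the entire theorem. So what you have written is an accurate road map of \cite{KMS}, not an independent proof, and in the context of \emph{this} paper the theorem is simply quoted.
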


\noindent In other words, every closed set is dense inside some intersection of a small number of basic sets.

It is well known and not hard to see that this in fact leads to a characterization (at least qualitatively) of  closed sets. Indeed, observe first that if $\cA$ is  $\eta$-closed, then the subgraph induced by $\cA$ has average degree at least $\eta|\cB|$, where $\cB$ is the set of rank-1 matrices, and maximal degree at most $|\cB|$.
Therefore, any subset of $\cA$ of size at least $(1-\eta/4)|\cA|$ has average degree at least $\eta|\cB|/2$. It follows from this observation and Theorem \ref{KMS} that we can find disjoint subsets $\cA_1,\dots,\cA_r$ of~$\cA$, subsets $\cC_1,\dots,\cC_r$ of $\mn$, a positive real number $\d=\d(\eta)$ and a positive integer $k=k(\eta)$ with the following properties.
\begin{enumerate}
\item The sets $\cA_i$ are disjoint.
\item Each $\cC_i$ is an intersection of at most $k$ basic sets.
\item For each $i$, $|\cA_i\cap\cC_i|\geq\d|\cC_i|$.
\item $|\bigcup_i\cA_i|\geq\eta|\cA|/4$.
\end{enumerate}

Conversely, if such sets exist, then the probability that a random matrix $A\in\cA$ belongs to some $\cA_i$ is at least $\eta/4$. If it belongs to $\cA_i$, then we can use the following lemma. We write $u\otimes v$ for the rank-1 matrix $M$ with $M_{ij}=u_iv_j$, which sends a vector $x$ to the vector $\langle x,v\rangle u$. Note also that $(u\otimes v)^T$ sends $x$ to $\langle x,u\rangle v$.
%\textbf{+++ I haven't checked carefully that the proof below is correct, so there could be mistakes and false statements. +++}

\begin{lemma}
Let $\cC$ be an intersection of at most $k$ basic sets and let $\cA\subset\cC$ be a subset of relative density at least $\d$. Then $\cA$ is $2^{-k}(\d-2^{-(m-k)})$-closed.
\end{lemma}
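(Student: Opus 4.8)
The plan is to reduce the statement to a second‑moment estimate inside a linear subspace. Since $\cA$ is nonempty, also $\cC\neq\emptyset$; after discarding redundant constraints, write $\cC$ as the intersection of $p$ basic sets of the form $\{A:Ax_i=y_i\}$ with $x_1,\dots,x_p$ spanning a $p$‑dimensional subspace $V\leq\F_2^n$, together with $q$ basic sets of the form $\{A:A^Tx_j=z_j\}$ with $x_1,\dots,x_q$ spanning a $q$‑dimensional subspace $U\leq\F_2^m$, where $p+q\leq k$. Then $\cC=A_0+H$ for the subspace $H=\{C\in\mn:Cx_i=0\text{ and }C^Tx_j=0\text{ for all }i,j\}$. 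The crucial observation is that for $A\in\cC$ and a rank‑$1$ matrix $B=u\otimes v$ one has $A+B\in\cC$ if and only if $B\in H$, and, since $Bx_i=\langle v,x_i\rangle u$ and $B^Tx_j=\langle u,x_j\rangle v$, this happens precisely when $v\in V^\perp$ and $u\in U^\perp$. Writing $W$ for the set of such rank‑$1$ matrices (equivalently, the rank‑$1$ matrices lying in $H$) and $\delta$ for the \emph{actual} relative density $|\cA|/|\cC|$ — it suffices to prove the bound for this value, as it is increasing in $\delta$ — we obtain
\[
\P_{A\in\cA,\,B}[A+B\in\cA]=\P_B[B\in W]\cdot\P_{A\in\cA,\,B}[A+B\in\cA\mid B\in W],\qquad \P_B[B\in W]=\frac{(2^{m-q}-1)(2^{n-p}-1)}{(2^m-1)(2^n-1)} .
\]

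To estimate the conditional probability, parametrise $B=u\otimes v$ with $u$ uniform in $U^\perp\setminus\{0\}$ and $v$ uniform in $V^\perp\setminus\{0\}$, and condition on $v$. For each fixed nonzero $v\in V^\perp$ the matrices $\{u\otimes v:u\in U^\perp\}$ form a subgroup $L_v\leq H$ of order $2^{m-q}$; writing $\cA=A_0+\cA'$ with $\cA'\subseteq H$ of density $\delta$ and applying Cauchy–Schwarz over the $|H|/|L_v|$ cosets of $L_v$ (on which $\cA'$ has average relative density $\delta$) gives $\P_{A\in\cA,\,u\in U^\perp}[A+u\otimes v\in\cA]\geq\delta$, where now $u$ ranges over all of $U^\perp$. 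Discarding the term $u=0$ (for which this probability is $1$) and averaging over $v$ — the resulting bound being independent of $v$ — yields
\[
\P_{A\in\cA,\,B}[A+B\in\cA\mid B\in W]\ \geq\ \frac{2^{m-q}\delta-1}{2^{m-q}-1}\ \geq\ \delta-2^{-(m-q)}\ \geq\ \delta-2^{-(m-k)} ,
\]
where the last two inequalities use, respectively, $\delta\geq 2^{-(m-k)}$ (the case $\delta<2^{-(m-k)}$ being vacuous) and $q\leq k$.

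Multiplying the two estimates gives
\[
\P_{A\in\cA,\,B}[A+B\in\cA]\ \geq\ \frac{(2^{n-p}-1)\bigl(2^{m-q}\delta-1\bigr)}{(2^m-1)(2^n-1)} ,
\]
and it remains to verify that the right‑hand side is at least $2^{-k}\bigl(\delta-2^{-(m-k)}\bigr)$. This final numerical inequality — in $m,n,p,q,k,\delta$ subject to $p+q\leq k$, $q\leq k$ and $2^{-(m-k)}\leq\delta\leq 1$ — is where I expect essentially all of the remaining work to lie, since everything above is structural. The difficulty is that the bound of the lemma is near‑optimal: for instance, when $\cA=\cC$ is a single basic set the left‑hand side equals $\P_B[B\in W]$ exactly, so no constant factor may be wasted. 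The correction term $2^{-(m-k)}$ is present precisely to absorb the two lower‑order losses in the argument — the gap between $2^{-(m-q)}$ and $2^{-(m-k)}$ coming from the second‑moment step, and the fact that $\P_B[B\in W]$ falls short of $2^{-(p+q)}$ (hence possibly of $2^{-k}$) — so in checking the inequality one has to track these terms rather than estimate crudely. Finally, it is important to take the fibres $L_v$ to vary in the $\F_2^m$‑coordinate (that is, to fix $v$ and average over $u$): this is exactly what makes the loss in the second‑moment step $2^{-(m-q)}\leq 2^{-(m-k)}$, and hence what produces $m$, rather than $n$, in the statement.
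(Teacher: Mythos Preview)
Your argument is essentially the paper's: identify that $A+u\otimes v\in\cC$ forces $(u,v)\in U^\perp\times V^\perp$, fix $v$, run a second-moment (Cauchy--Schwarz) bound over the cosets of $L_v=U^\perp\otimes v$ inside $\cC$ while temporarily allowing $u=0$ (yielding conditional probability $\ge\delta$), and then discard $u=0$. The only divergence is at the very end. The paper does not track the exact factor $\frac{(2^{m-q}-1)(2^{n-p}-1)}{(2^m-1)(2^n-1)}$ at all: it simply asserts that the event $(u,v)\in U^\perp\times V^\perp$ ``occurs with probability $2^{-k}$'' and multiplies by the conditional bound $\delta-2^{-(m-k)}$, finishing in one line with no residual numerical inequality.

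Your caution is not misplaced --- as literally stated the lemma can fail for degenerate parameters (e.g.\ $n=1$, $p=k=1$, $q=0$, $\cA=\cC$: the closedness is $0$, while the claimed bound is $\tfrac12(1-2^{-(m-1)})>0$) --- so the paper is tacitly working in the regime $m,n$ large relative to $k$, where your exact ratio is $2^{-(p+q)}(1+o(1))\ge 2^{-k}(1+o(1))$ and the distinction disappears. In that regime your final displayed inequality holds comfortably, so you should not expect ``essentially all of the remaining work'' to lie there; the paper treats it as not worth comment. Either adopt the same looseness, or record the standing assumption that $m,n$ are large and replace the constant $2^{-k}$ in the conclusion by, say, $2^{-k-1}$, which your exact computation then delivers immediately.
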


\begin{proof}
Let us set $\cC(x,y)=\{A\in\mn:Ax=y\}$, and $\cC'(x,y)=\{A\in\mn:A^Tx=y\}$. Let $x_1,\dots,x_k,y_1,\dots,y_k$ be non-zero vectors such that $\cC=\bigcap_{i=1}^r\cC(x_i,y_i)\cap\bigcap_{i=r+1}^k\cC'(x_i,y_i)$.

Let $u\otimes v$ be a rank-1 matrix. If there exists $i\leq r$ such that $\langle x_i,v\rangle\ne 0$, then $(A+u\otimes v)(x_i)=y_i+u$, so $A+u\otimes v\notin\cC_i$ and hence $A+u\otimes v\notin\cC$. Similarly, if there exists $i>r$ such that $\langle x_i,u\rangle\ne 0$, then $(A+u\otimes v)^T(x_i)=y_i+v$ and again $A+u\otimes v\notin\cC$. 

We shall now bound from below the probability that $A+u\otimes v\in\cA$ given that $A\in\cA$ and that $\langle x_i,v\rangle=0$ for every $i\leq r$ and $\langle x_i,u\rangle=0$ for every $r<i\leq k$, noting that the condition on $u\otimes v$ states that $(u,v)\in U\times V$ for a pair of subspaces $U$ and $V$ with codimensions that add up to at most $k$, a condition that occurs with probability $2^{-k}$.

Let us now condition further on the choice of $v\in V$. That means that we fix $v$, choose a random $u\in U$, and add $u\otimes v$ to $A$. If we allow $u$ to take the value 0, then the resulting matrix is uniformly distributed in the affine subspace $A+U\otimes v$, so the probability that it is in $\cA$ is equal to the relative density of $\cA$ inside this affine subspace.

The translates of $U\otimes v$ by matrices in $\cC$ partition $\cC$. Let us write them as $\cW_1,\dots,\cW_s$, and let the relative density of $\cA$ inside $\cW_i$ be $\d_i$. Then, still fixing $v$, we have that
\[\P[A+u\otimes v\in\cA]=\sum_i\,\P[A\in\cW_i]\,\P[A+u\otimes v\in\cA|A\in\cW_i]=\sum_i \frac{\d_i^2}{s\delta}\geq\d.\]
This statement is true regardless of $v$, so we deduce that the probability that $A+u\otimes v\in\cA$ given that $A\in\cA$ and $(u,v)\in U\times(V\setminus\{0\})$ is at least $\d$. If we now insist that $u\ne 0$, we reduce this probability by at most $2^{-(m-k)}$, so the result is proved.
\end{proof}

Let $B\in\cB$ be chosen uniformly at random. Given the lemma above, applied to the sets $\cA_i$ and $\cC_i$, we deduce that the conditional probability that $A+B\in\cA_i$ given that $A\in\cA_i$ is at least $c(\d,k)$, and from that it follows that $\cA$ is  $c(\d,k)\eta/4$-closed.

Thus, a set $\cA$ is $\eta$-closed for some not too small $\eta$ if and only if an appreciable fraction of $\cA$ is efficiently covered by disjoint intersections of few basic sets.

\medskip

Barak, Kothari and Steurer suggest in \cite{barak} that establishing a higher dimensional analogue of Theorem \ref{KMS} may be a useful step in obtaining a proof of the full Unique Games Conjecture, though they do not actually provide a formal reduction. The main purpose of this paper is to formulate a suitable conjecture and prove some partial results towards it. We say that $\cA\s \F_2^{n_1}\otimes \dots \otimes \F_2^{n_d}$ is $\eta$-closed if with probability at least $\eta$, we have $A+u_1\otimes \dots \otimes u_d\in \cA$, when $A\in \cA$ and vectors $u_i\in \F_2^{n_i}\setminus \{0\}$ are chosen independently and uniformly at random. 

\begin{problem} \label{tensorproblem}
	Give a qualitative description of $\eta$-closed sets $\cA\s \F_2^{n_1}\otimes \dots \otimes \F_2^{n_d}$.
\end{problem}

To see that this is indeed a generalization of the problem about matrices considered above, we identify $\mn$ with $\F_2^m \otimes \F_2^n$ in the usual way, which leads to a slight reformulation of Theorem \ref{KMS} in terms of tensor products. Note first that under this identification, the set $\{M\in \mn:Mx_1=\dots=Mx_a=M^Ty_1=\dots=M^Ty_b=0\}$ becomes the set $H\otimes K$, where $H=\langle y_1,\dots,y_b\rangle^{\perp}$ and $K=\langle x_1,\dots,x_a\rangle^{\perp}$. It follows that an intersection of at most $k$ basic sets is either empty or a translate of $H\otimes K$ for some pair of subspaces $H\s \F_2^m,K\s \F_2^n$ with $\codim(H)+\codim(K)\leq k$.

In the higher-dimensional case, there is a richer class of sets $\cA\s \F_2^{n_1}\otimes \dots \otimes \F_2^{n_d}$ that are $\eta$-closed. To describe them, we introduce the following piece of notation, which we shall use repeatedly in the rest of the paper. Given a non-empty subset $I\s \lbrack d\rbrack$, write $\F_2^{I}$ for $\bigotimes_{i\in I} \F_2^{n_i}$, so that we naturally have $\F_2^{n_1}\otimes \dots \otimes \F_2^{n_d}=\F_2^{I}\otimes \F_2^{I^c}$. Here and elsewhere in the paper, $I^c$ stands for $\lbrack d\rbrack \setminus I$.

Now we say that $\cC$ is $k$-\emph{simple} if there exists a collection of subspaces $H_I\s\F_2^I$ of codimension at most $k$, one for each non-empty subset  $I\s \lbrack d\rbrack$, such that $\cC$ is a translate of the set $\bigcap_{I\s \lbrack d\rbrack,I\neq \emptyset}(H_I\otimes \F_2^{I^c})$ (where $H_{\lbrack d \rbrack}\otimes \F_2^{\emptyset}$ is just $H_{\lbrack d\rbrack}$), which is a subspace of $\F_2^{n_1}\otimes \dots \otimes \F_2^{n_d}$. It is not hard to see that this subspace contains at least a proportion $c(d,k)>0$ of all rank-1 tensors $u_1\otimes \dots \otimes u_d$ (provided that $n_1,\dots,n_d$ are sufficiently large), so it is $c(d,k)$-closed. It follows that any translate of it is $c(d,k)$-closed too. 

We now make the following conjecture.

\begin{conjecture} \label{mainconjecture}
	For any $\eta>0$ and any positive integer $d$, there exist $k=k(d,\eta)$ and $\rho=\rho(d,\eta)>0$ with the following property. Let $\cA\s \F_2^{n_1}\otimes \dots \otimes \F_2^{n_d}$ be $\eta$-closed. Then there is a $k$-simple set $\cC$ such that $|\cA\cap \cC|\geq \rho|\cC|$.
\end{conjecture}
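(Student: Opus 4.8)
The plan is to prove Conjecture~\ref{mainconjecture} by induction on $d$, with the Khot--Minzer--Safra theorem as the main input. The case $d=1$ is immediate: a $k$-simple set is then just a translate of a subspace of $\F_2^{n_1}$ of codimension at most $k$, while an $\eta$-closed $\cA\s\F_2^{n_1}$ satisfies $|\cA|\ge\eta 2^{n_1}$, since adding a uniformly random non-zero vector to a fixed $A\in\cA$ lands in $\cA$ with probability $(|\cA|-1)/(2^{n_1}-1)$; thus $\cC=\F_2^{n_1}$ works with $\rho=\eta$. The case $d=2$ is Theorem~\ref{KMS} in the reformulation recorded above: an intersection of boundedly many basic sets is a translate of $H\otimes K$ with $\codim H+\codim K$ bounded, and this is a $2$-simple set, with $H_{[2]}$ taken to be the whole space. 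So suppose $d\ge 3$ and that the conjecture holds for $d-1$. Writing $\F_2^{n_1}\otimes\dots\otimes\F_2^{n_d}=W\otimes\F_2^{n_d}$ with $W=\F_2^{[d-1]}$, split the generic rank-$1$ tensor as $v\otimes u_d$, where $v=u_1\otimes\dots\otimes u_{d-1}$ is a rank-$1$ tensor in $W$ and $u_d\in\F_2^{n_d}\setminus\{0\}$; the hypothesis then says that $A+v\otimes u_d\in\cA$ with probability at least $\eta$ over a uniform $A\in\cA$, a uniform rank-$1$ tensor $v\in W$, and a uniform non-zero $u_d$.

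There are three things to carry out. \emph{Step 1 (behaviour in the last coordinate).} If $\cA$ were closed under adding \emph{all} rank-$1$ matrices $w\otimes u_d$ with $w\in W$ arbitrary, then Theorem~\ref{KMS}, applied to the ambient matrix space $W\otimes\F_2^{n_d}$, would place a dense subset of $\cA$ inside a translate of $H\otimes K$ with $\codim H+\codim K$ bounded in terms of $\eta$; here $H\s W$ and $K\s\F_2^{n_d}$ would feed the subspaces $H_I$ with $d\notin I$ and $H_{\{d\}}$ respectively. We only have closure under the much sparser tensors $v\otimes u_d$ with $v$ rank-$1$, so the task is to upgrade this weak closure into a matrix-closure statement that (the proof of) Theorem~\ref{KMS} can use. \emph{Step 2 (fibre and induct).} For $\xi\in(\F_2^{n_d})^{*}$ let $c_\xi\colon W\otimes\F_2^{n_d}\to W$ be the contraction $A\mapsto(\mathrm{id}_W\otimes\xi)(A)$, so that $c_\xi(A+v\otimes u_d)=c_\xi(A)+\xi(u_d)\,v$; conditioning on $\xi(u_d)=1$, an event of probability close to $1/2$, one sees the closure operation add a uniform rank-$1$ tensor in $W$ to the slice $c_\xi(A)$. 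A short weight-regularization argument should show that for most $\xi$ the slices $c_\xi(A)$ of a typical $A\in\cA$ concentrate inside an $\eta'$-closed subset of $W=\F_2^{[d-1]}$, and the inductive hypothesis then supplies, for each such $\xi$, a $(d-1)$-dimensional $k_1$-simple set, given by subspaces $H^\xi_J\s\F_2^{J}$ for $\emptyset\ne J\s[d-1]$, in which that slice concentrates. \emph{Step 3 (glue).} Finally one must assemble the data into a single $k$-simple set in $\F_2^{[d]}$: the subspaces $H^\xi_J$ for varying $\xi$ must be shown to be essentially $\xi$-independent and combined with the $H$ of Step~1 to produce one family $H_I\s\F_2^{I}$, $\emptyset\ne I\s[d]$ (those with $d\in I$ coming from the $H^\xi_{I\setminus\{d\}}$, those with $d\notin I$ from $H$ and, recursively, from the slices). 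After passing to a positive-density subset of $\cA$, a correlation or pigeonhole argument should force the required agreement, whereupon $\cC=\bigcap_{I}(H_I\otimes\F_2^{I^c})$, suitably translated, is the $k$-simple set sought, with $k$ and $\rho$ depending only on $d$ and $\eta$.

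The main obstacle is Step~1: turning closure with respect to the vanishingly sparse set of rank-$1$ tensors into honest matrix-closure that Theorem~\ref{KMS} can digest. This is exactly the content of the ``highly asymmetric Balog--Szemer\'edi--Gowers'' statement about Cayley graphs that we formulate below --- one wants to deduce coset structure from closure with respect to a ``doubling'' set that is a vanishingly small fraction of the ambient group, a regime far beyond the classical Balog--Szemer\'edi--Gowers theorem, where the connection set has constant density. Since that abstract statement fails for a general Cayley graph (as we show at the end of the paper), any proof of Step~1 must exploit features specific to the tensor-power graph $\Gamma$ --- presumably the hypercontractivity and ``global'' structure of functions on the spaces $\F_2^{I}$ that underlie the Khot--Minzer--Safra argument --- and until such input is available the strategy closes only in the special case treated below.
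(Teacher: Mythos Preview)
This statement is Conjecture~\ref{mainconjecture}, which the paper does \emph{not} prove: it remains open. What the paper establishes is the special case in which $\cA$ is $(\cB',1-\e)$-closed for some $\cB'\s\cB$ of density at least~$\d$ (Theorem~\ref{mainresult}), and it shows that the full conjecture would follow from a positive answer to Question~\ref{mainq}; but it then exhibits a counterexample to Question~\ref{mainq} for general Cayley graphs. So there is no ``paper's own proof'' to compare against, and your proposal is, appropriately, not a proof either --- you identify Step~1 as the obstacle and concede the strategy closes only in the special case.

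Your route --- induct on $d$ with Theorem~\ref{KMS} as the $d=2$ base, slice via contractions $c_\xi$, and glue --- is genuinely different from the paper's. The paper does not slice and induct; it proves Theorem~\ref{mainresult} for all $d$ at once via Fourier analysis, reducing to Lemma~\ref{mainlemma}, which builds a single ``forcing'' multiset $Q\s f_1(d)\cB'$ whose near-annihilator lies in a set $\sum_I V_I\otimes\F_2^{I^c}$ with $\dim V_I$ bounded, and then separately observes that Question~\ref{mainq} (applied once, to the tensor graph) would bridge from $\eta$-closed to $(1-\e)$-closed. Your scheme would need an asymmetric-BSG-type input at \emph{each} level of the induction (Step~1 for every $d$), and moreover Step~1 as you state it --- upgrading closure under $\{v\otimes u_d:v\text{ rank }1\}$ to closure under $\{w\otimes u_d:w\in W\}$ --- is strictly harder than what Question~\ref{mainq} asks, since the latter only seeks a $(B',1-\e)$-closed $C$ with $B'\s B$, not closure under all of $W\otimes\F_2^{n_d}$.

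Even granting Step~1, Steps~2 and~3 carry real gaps. In Step~2, $c_\xi$ pushes $\cA$ forward to a \emph{weighted} object (fibres over $\cA$ can be wildly non-uniform), and the assertion that ``for most $\xi$ the slices concentrate in an $\eta'$-closed subset of $W$'' does not follow from $\eta$-closedness of $\cA$ without a nontrivial regularization. In Step~3, the inductive hypothesis yields for each $\xi$ only the \emph{existence} of some $k_1$-simple $\cC^\xi$, with no canonical choice; a pigeonhole over the possible $(d-1)$-simple structures is hopeless (there are super-polynomially many), so ``essentially $\xi$-independent'' needs an actual mechanism. The paper's Lemma~\ref{mainlemma} sidesteps this coherence problem by producing all the $V_I$, $I\s[d]$, simultaneously from a single forcing multiset, rather than one slice at a time.
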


\noindent Note that in the $d=2$ case, we allow translates of sets $(H_{\{1\}}\otimes H_{\{2\}})\cap H_{\{1,2\}}$ rather than just translates of $H_{\{1\}}\otimes H_{\{2\}}$, so Conjecture \ref{mainconjecture} might seem to be weaker than Theorem \ref{KMS}. However, this actually makes no difference, since when intersecting with $H_{\{1,2\}}$, the cardinality of the set drops by a factor at most $2^k$.

The main result of this paper, stated later in this section, is a proof of Conjecture \ref{mainconjecture} in an important special case.

%\noindent Let $B$ be the set of all tensors of the form $u_1\otimes\dots\otimes u_d$. We have been focusing on sets which are $(1-\e)$-closed with respect to a dense subset $B_0\subset B$. We say that $\cA\subset \F_2^{n_1}\otimes \dots \otimes \F_2^{n_d}$ is $(B_0,\eta)$-closed if adding a random element of $B_0$ to a random element of $\cA$, the sum is in $\cA$ with probability at least $\eta$. 

\subsection{What can be said about more general Cayley graphs?}

It is tempting to try to prove Conjecture \ref{mainconjecture} by identifying and proving a statement that applies to a much wider class of Cayley graphs, of which Conjecture \ref{mainconjecture} would be a special case. We would begin with an Abelian (or even non-Abelian) group $G$ and a pair of subsets $A,B\subset G$, where we think of $B$ as the set of generators, satisfying the hypothesis that $|\{(a,b)\in A\times B: a+b\in A\}|\geq\eta|A||B|$. We shall say in this situation that $A$ is $(B,\eta)$-\emph{closed} (in $G$). %In what follows, we shall allow $B$ to be a multiset.

Another way of writing the condition is
\[\langle \AA*\mu_B,\AA\rangle\geq\eta\a,\]
where $\a$ is the density of $A$, $\mu_B$ is the characteristic measure of $B$ (that is, the function that takes the value $|G|/|B|$ on $B$ and 0 elsewhere) and we define $f*g(x)$ to be $\E_{y+z=x}f(y)g(z)$. By the Cauchy-Schwarz inequality the left-hand side is at most $\|\AA*\mu_B\|_2\|\AA\|_2=\|\AA*\mu_B\|_2\a^{1/2}$, where inner products and $L_p$ norms are defined using expectations, so our hypothesis implies that $\|\AA*\mu_B\|_2^2\geq\eta^2\a$. It is easy to see that this ``mixed energy" $\|\AA*\mu_B\|_2^2$ can be at most $\a$, with equality if and only if $a+b-b'\in A$ for every $a\in A, b,b'\in B$. 

At this point let us recall the so-called asymmetric Balog-Szemer\'edi-Gowers theorem, which can be found in \cite{taovu} as Theorem 2.35. (For a useful alternative presentation of the theorem, see also \cite{green}.) The main assumption of the theorem is that $A,B$ are two finite subsets of an Abelian group, with densities $\a$ and $\be$, such that $\|\AA*\BB\|_2^2\geq\eta\a\be^2$ (which is equivalent to saying that $\|\AA*\mu_B\|_2^2\geq\eta\a$), but there is also an assumption that $A$ is not too much bigger than $B$. The precise statement is as follows.

\begin{theorem} \label{asym}
For every $\e>0$ there exists a constant $C=C(\e)$ with the following property. Let $G$ be a finite Abelian group, let $L\geq 1$, let $0<\eta\leq 1$ and let $A$ and $B$ be finite subsets of $G$ with densities $\a$ and $\be$, such that $\a\leq L\be$ and $\|\AA*\BB\|_2^2\geq 2\eta\a\be^2$. Then there exist a subset $H\subset G$ such that $|H+H|\leq C\eta^{-C}L^\e|H|$, a subset $X\subset G$ of size at most $C\eta^{-C}L^\e|A|/|H|$ such that $|A\cap(X+H)|\geq C^{-1}\eta^CL^{-\e}|A|$, and some $x\in G$ such that $|B\cap(x+H)|\geq C^{-1}\eta^CL^{-\e}|B|$.
\end{theorem}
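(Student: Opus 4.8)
The plan is to reduce the statement to the Balog--Szemer\'edi--Gowers graph lemma and then to extract the three required objects from its output by means of Ruzsa's covering lemma and the Pl\"unnecke--Ruzsa inequalities. First I would unwind the hypothesis: writing $r(x)=|\{(a,b)\in A\times B:a+b=x\}|$, one has $\sum_x r(x)=|A||B|$ and $\|\AA*\BB\|_2^2=|G|^{-3}\sum_x r(x)^2$, so $\|\AA*\BB\|_2^2\ge 2\eta\a\be^2$ says exactly that the number of additive quadruples $a+b=a'+b'$ with $a,a'\in A$ and $b,b'\in B$ is at least $2\eta|A||B|^2$. Since the $x$ with $r(x)<\eta|B|$ contribute at most $\eta|A||B|^2$ to $\sum_x r(x)^2$, the ``popular sumset'' $P=\{x:r(x)\ge\eta|B|\}$ satisfies $\sum_{x\in P}r(x)^2\ge\eta|A||B|^2$ and hence $\sum_{x\in P}r(x)\ge\eta|A||B|$; moreover $|P|\le\eta^{-1}|A|$ because each $r(x)\le|B|$. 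Thus the bipartite graph $\Gamma$ on vertex classes $A$ and $B$ with $a\sim b$ precisely when $a+b\in P$ has at least $\eta|A||B|$ edges.

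Next I would apply a suitable form of the Balog--Szemer\'edi--Gowers graph lemma to $\Gamma$, obtaining $A_1\subseteq A$ and $B_1\subseteq B$, each of relative density at least $\eta^{O(1)}$, such that every pair $(a,b)\in A_1\times B_1$ is joined by at least $\eta^{O(1)}|A||B|$ paths of length three in $\Gamma$, and every two elements of $B_1$ have at least $\eta^{O(1)}|A|$ common neighbours. A path $a\sim b'\sim a'\sim b$ yields the representation $a+b=(a+b')-(a'+b')+(a'+b)$ with all three summands in $P$, and distinct paths yield distinct representations, so every element of $A_1+B_1$ has at least $\eta^{O(1)}|A||B|$ representations in $P-P+P$; likewise a common neighbour $a$ of $b,b'$ gives $b-b'=(a+b)-(a+b')$, so every element of $B_1-B_1$ has at least $\eta^{O(1)}|A|$ representations in $P-P$. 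Counting these representations against $|P-P+P|\le|P|^3$ and $|P-P|\le|P|^2$ and using $|P|\le\eta^{-1}|A|$ gives $|B_1-B_1|\le\eta^{-O(1)}|A|$ and $|A_1+B_1|\le\eta^{-O(1)}|A|^2/|B|$; since $|A|\le L|B|$ while $A_1,B_1$ have density $\eta^{O(1)}$ in $A,B$, these read $|B_1-B_1|\le\eta^{-O(1)}L\,|B_1|$ and $|A_1+B_1|\le\eta^{-O(1)}L\,|A_1|$.

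Finally I would assemble the conclusion with $H=B_1-B_1$. Since $B_1$ has small doubling, the Pl\"unnecke--Ruzsa inequalities give $|H+H|=|2B_1-2B_1|\le\eta^{-O(1)}L^{O(1)}|B_1|\le\eta^{-O(1)}L^{O(1)}|H|$, while $|H|\ge|B_1|\ge\eta^{O(1)}|B|$; taking $x=b_0$ for any $b_0\in B_1$ we have $B_1\subseteq x+H$, so $|B\cap(x+H)|\ge\eta^{O(1)}|B|$. Applying Ruzsa's covering lemma to $A_1$ with $B_1$, and using the bound on $|A_1+B_1|$, one covers $A_1$ by at most $\eta^{-O(1)}L^{O(1)}$ translates of $B_1-B_1=H$; taking $X$ to be the resulting set of translates gives $|X|\le\eta^{-O(1)}L^{O(1)}|A|/|H|$ and $|A\cap(X+H)|\ge|A_1|\ge\eta^{O(1)}|A|$. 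This proves the theorem with some fixed power of $L$ in place of $L^\e$.

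The genuinely delicate point, and the one I expect to be the main obstacle, is to improve the fixed power of $L$ to $L^\e$ for an arbitrary $\e>0$. The difficulty is that the bound $|P|\le\eta^{-1}|A|$ is essentially best possible (it is saturated when almost every popular sum has about $\eta|B|$ representations), so any argument that controls the sumsets of $A_1$ and $B_1$ through those of $P$ must lose a fixed power of $L=|A|/|B|$, and the crude application of Ruzsa's covering lemma above costs a further such power. Obtaining the sharper dependence requires a more careful, iterative version of the argument, in which the crude powers of $L$ are traded against controlled losses over $O(1/\e)$ steps; carrying this out so that the accumulated $\eta^{-O(1)}$ factors remain under control is where the real work lies.
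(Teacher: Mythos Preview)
First, note that the paper does not itself prove this theorem: it is quoted as the asymmetric Balog--Szemer\'edi--Gowers theorem and attributed to Theorem~2.35 of Tao--Vu (with Green's notes also cited), so there is no in-paper argument to compare against. On the substance of your proposal, the first two paragraphs give a correct and standard reduction, via popular sums and the BSG graph lemma, to dense subsets $A_1\subseteq A$ and $B_1\subseteq B$ with $|B_1-B_1|$ and $|A_1+B_1|$ bounded by $\eta^{-O(1)}L$ times the appropriate size, and your assembly of $H=B_1-B_1$, $X$, and $x$ via Pl\"unnecke--Ruzsa and Ruzsa covering is fine. But, as you explicitly concede in your final paragraph, this yields only a fixed power of $L$ in every bound, not $L^{\e}$; what you have written is therefore a proof of the \emph{symmetric} BSG conclusion transplanted into this setting, not of the theorem as stated.

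That is a genuine gap, because the entire content of the asymmetric theorem---and the reason the paper invokes it rather than ordinary BSG---is precisely the $L^{\e}$ dependence for arbitrary $\e>0$. You diagnose the obstruction correctly (any argument that routes through $|P|\le\eta^{-1}|A|$ and a single application of covering must lose a fixed power of $L$) and you gesture at an iterative remedy, but no iteration is actually set up or carried out. The proofs in the cited references do iterate, but not by simply repeating your argument: one applies a weak asymmetric step to pass from the pair $(A,B)$ to a new pair in which the effective size ratio has dropped by a constant factor at only polynomial cost in $\eta$, and runs this for $O(\e^{-1})$ rounds so that the $L$-losses telescope to $L^{\e}$ while the $\eta$-losses remain $\eta^{-C(\e)}$. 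Until that mechanism is specified and the bookkeeping done, the proposal does not establish Theorem~\ref{asym}.
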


\noindent More qualitatively speaking, if $A$ is not too much larger than $B$ and $\|\AA*\BB\|_2^2$ is within a constant of its largest possible value, then there is a set $H$ of small doubling such that a small number of translates of $H$ cover a substantial proportion of $A$, and some translate of $H$ covers a substantial proportion of $B$. It is not hard to see that the converse holds as well.

This theorem cannot be used to prove Conjecture \ref{mainconjecture} because of the condition that $\a\leq L\be$, which does not apply here since the set $\cA$ in Conjecture \ref{mainconjecture} can be much bigger than the set $\cB$. That raises the following question, which generalizes Problem \ref{tensorproblem}.%, which is the main focus of this paper.

\begin{question} \label{generalq}
Let $G$ be a finite Abelian group, let $\eta>0$, and let $A,B\subset G$ be subsets such that $A$ is $(B,\eta)$-closed in $G$. What can be said about $A$, $B$ and the relationship between them?
\end{question}

\noindent A similar question can of course be asked with the slightly weaker hypothesis that $\|\AA*\BB\|_2^2\geq\eta^2\a \be^2$, but we shall concentrate on the question as stated, since it is more closely related to Conjecture \ref{mainconjecture}.

An immediate observation is that we cannot hope to say anything interesting about the structure of $B$, even if $\eta=1$. For example, $\eta=1$ if $A=G$ and $B$ is an arbitrary subset of $G$. For a more general example, one can let $A$ be an arbitrary union of cosets of some subgroup $H$ and let $B$ be an arbitrary subset of $H$. For a slightly different example, let $G=\F_2^n$, let $B$ be the set $\{e_1,\dots,e_n\}$ of standard basis vectors, and let $A$ be a union of $n/3$-dimensional affine subspaces $V_i$, such that each $V_i$ is a random translate of the subspace generated by $n/3$ randomly chosen $e_j$. Then if $x\in V_i$ and $b\in B$, the probability that $x+b\in V_i$ is 1/3, so $A$ is 1/3-closed.

%\textbf{*** I'm slightly confused by the next paragraph. In the Balog-Szemer\'edi-Gowers theorem the condition $|B\cap(x+H)|\geq C^{-1}\eta^CL^{-\e}|B|$ forces $|H|$ to be not too small, but here I don't see why we can't take $H$ to be a small subspace to cover most of $A$ efficiently with translates.***}

%\textbf{+++I hope what I've now written is more convincing.+++}

Any general statement will have to be weak enough to allow for examples like these. The last example shows that we cannot hope to find a single set $H$ of small doubling and cover a large portion of $A$ efficiently with translates of $H$, unless $H$ is of constant size, in which case the conclusion becomes trivial. To sketch briefly why not, observe first that by Freiman's theorem we can assume that $H$ is a subspace. Next, note that for each vector $x$, the probability that it belongs to the span of a random $n/3$ standard basis vectors is exponentially small in the size of the support of $x$. We can also use the following simple lemma.

\begin{lemma}
Let $H$ be a subspace of dimension $d$ and let $k\leq d$. Then the number of vectors in $H$ of support size at most $k$ is at most the number of vectors of support size at most $k$ in the subspace generated by the first $d$ standard basis vectors $e_1,\dots,e_d$, namely $\sum_{i=0}^k\binom di$.
\end{lemma}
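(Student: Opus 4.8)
The plan is to use the existence of an \emph{information set} for $H$, that is, a set $S\s\{1,\dots,n\}$ of $d$ coordinates such that the restriction to $H$ of the coordinate projection $\pi_S\colon\F_2^n\to\F_2^S$ is a bijection onto $\F_2^S$. Granting this, the lemma follows almost immediately: if $v\in H$ has support of size at most $k$, then $\pi_S(v)$ is obtained from $v$ by deleting coordinates and so also has support of size at most $k$; hence $\pi_S$ maps the set of weight-$\le k$ vectors of $H$ injectively into the set of weight-$\le k$ vectors of $\F_2^S$. Since there are exactly $\sum_{i=0}^k\binom di$ vectors of support size at most $k$ in $\F_2^S\cong\F_2^d$ (they are specified by the choice of their nonzero coordinates), we conclude that $H$ has at most $\sum_{i=0}^k\binom di$ vectors of support size at most $k$. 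This number is attained by $\langle e_1,\dots,e_d\rangle$ itself, which also gives the equality claimed in the final sentence of the statement.

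It remains to produce the information set. Fix a basis $v_1,\dots,v_d$ of $H$ and let $M$ be the $d\times n$ matrix with rows $v_1,\dots,v_d$. Then $M$ has rank $d$, so its column space is all of $\F_2^d$, and in particular $d$ of the columns of $M$ are linearly independent; let $S$ index such a set of columns. The $d\times d$ submatrix $M_S$ of $M$ is then invertible over $\F_2$, so $x\mapsto xM_S$ is a bijection $\F_2^d\to\F_2^S$. Since every element of $H$ is $xM$ for a unique $x\in\F_2^d$, and $\pi_S(xM)=xM_S$, the map $\pi_S$ restricted to $H$ is indeed a bijection onto $\F_2^S$, as required.

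I do not expect any real obstacle here: the only ingredient beyond routine bookkeeping is the standard fact that a rank-$d$ matrix has $d$ linearly independent columns, and the rest is an injection-plus-counting argument. (One could alternatively phrase the same argument by permuting the coordinates so that $M$ row-reduces to the form $[\,I_d\mid *\,]$, which exhibits $S$ explicitly.)
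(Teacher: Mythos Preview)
Your proof is correct and is essentially the same argument as the paper's: the paper uses Gaussian elimination to produce a basis $v_1,\dots,v_d$ of $H$ and coordinates $t_1,\dots,t_d$ with $v_i(t_j)=\delta_{ij}$, and then observes that the support size of $\sum_i\lambda_iv_i$ is at least the number of nonzero $\lambda_i$---which is exactly your projection-to-an-information-set argument with $S=\{t_1,\dots,t_d\}$. Indeed, your parenthetical remark about row-reducing to $[\,I_d\mid *\,]$ is precisely the paper's approach.
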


\begin{proof}
Let $u_1,\dots,u_d$ be a basis for $d$. By Gaussian elimination, we can convert $u_1,\dots,u_d$ into a basis $v_1,\dots,v_d$ and find coordinates $t_1,\dots,t_d$ such that $v_i(t_j)=\d_{ij}$. Then the support size of $\sum_i\lambda_iv_i$ is at least the number of non-zero $\lambda_i$, which proves the result.
\end{proof}

When $d$ is large, it follows that the proportion of vectors in $H$ of small support is very small. Combining these observations, one can show that for every $\eta$ there exists $d$ such that if $H$ is a $d$-dimensional subspace, then the probability that a random subspace $V$ of dimension $n/3$ is $(H,\eta/2)$-closed is at most $\eta/2$. This in turn can be used to prove that with high probability the set $A$ described above (for a suitable number of $V_i$) is not $(H,\eta)$-closed for any $H$ of dimension $d$ or above.

However, these examples do not rule out a weakening along the following lines.

%\textbf{*** I'm a bit confused by what follows. I believe that finding just one subset of $A$ which is  closed and has small doubling is sufficient for us, as shown in} \href{https://cayleygraphproblem.wordpress.com/2017/11/30/first-blog-post/#comment-15}{this} \textbf{ or } \href{https://cayleygraphproblem.wordpress.com/2017/11/30/first-blog-post/#comment-25}{this}\textbf{ comment. Also, in the second paragraph after the question the bit "some $A_i$ contains at least $C^{-2}|A_i|$ points of $A$" should be deleted since $A_i$ is a subset of $A$. ***}

%\textbf{+++ As I see it, there are two closely related objectives to the paper. One is to obtain a generalization of Theorem \ref{KMS}, which is more naturally stated as finding just one set, and the other is to obtain a ``very asymmetric BSG theorem", which is more naturally stated as finding a characterization of pairs $(A,B)$ such that $A$ is $(B,\eta)$-closed, which requires $A$ to be covered by lots of sets. Given your remark, I have changed the emphasis to the single-set version, but mentioned the many-sets version just afterwards. I hope that now comes across less confusingly. +++}

\begin{question}\label{wrongq}
Let $G$ be a finite Abelian group, let $\eta>0$, and let $A,B$ be subsets of $G$ such that $A$ is $(B,\eta)$-closed in $G$. Does it follow that $A$ has a non-empty subset $A'$ such that $A'$ is $(B,\eta')$-closed in $G$, and $|A'+A'|\leq C|A'|$, where $\eta'>0$ and $C$ are constants that depend on $\eta$ only?
\end{question}

\noindent An argument similar to the one we mentioned just after the statement of Theorem \ref{KMS} shows that if the answer is yes, then we can find a collection of disjoint subsets $A_1,\dots,A_m$ that cover a substantial proportion of $A$, each one with small doubling and each one $(B,\eta')$-closed (with a slightly smaller $\eta'$). Thus, we would be able to obtain a conclusion similar to that of Theorem \ref{asym} but without the requirement that the structured sets are all translates of one another.

A positive answer would also imply Conjecture \ref{mainconjecture}. Indeed, by Freiman's theorem $A_i$ is contained in a subspace $V_i$ not much larger than $A_i$. This reduces the conjecture to the case where $\cA$ is a subspace. In that case, a very simple corollary of our main result, Corollary \ref{spacecorollary} (stated later) proves the conjecture.

%(We give a proof in the appendix.) \textbf{*** This proof is not in the paper currently, would you like it to be added? ***} \textbf{+++I think it would be good to include it, but to put it right here would interrupt the flow of the introduction too much, and it isn't all that relevant later in the paper. That's why I think that perhaps putting it in an appendix would be good. I haven't checked the Khot-Minzer-Safra paper to see whether they prove this case separately. If they do, then it doesn't make much sense to repeat the proof and we should just give a reference to the appropriate part of their paper.+++}

However, the answer to Question \ref{wrongq} is easily seen to be negative (which implies that it is also negative if we assume the weaker mixed-energy hypothesis instead). The example we are about to give was communicated to us privately by Boaz Barak as a counterexample to a related but slightly different statement. 

For convenience let $n$ be odd, let $A\subset\F_2^n$ be the set of all vectors with $(n\pm 1)/2$ coordinates equal to 1, and let $B$ be the set of standard basis vectors. Then it is easy to see that $A$ is $\eta$-closed for $\eta=(n+1)/2n\approx 1/2$. Suppose now that we could find a subset $A'\subset A$ such that $|A'+A'|\leq C|A'|$, and $A'$ is $(B,\eta')$-closed. By Freiman's theorem, $A'$ is contained in a subspace $V$ that is not much bigger than $A'$, which implies that $V$ is $c$-closed for some positive constant $c=c(\eta)$. That implies that at least $cn$ of the standard basis vectors belong to $V$. Let $W$ be the subspace spanned by these basis vectors. The maximum number of elements of $A$ that can belong to a translate $x+W$ of $W$ is $2(cn)^{-1/2}|W|$, and therefore $|A'|\leq 2(cn)^{-1/2}|V|$. This contradicts the fact that $V$ is not much bigger than $A'$.

In this paper we formulate a yet weaker conjecture and prove that it still implies Conjecture \ref{mainconjecture}. Unfortunately, we also give a counterexample to the weaker conjecture. The counterexample does not make the implication %completely
vacuous, however, because the implication depends on a non-trivial theorem that is true and of some interest: it is just that for a general Cayley graph (on a finite Abelian group) one cannot deduce the hypotheses of the theorem from the assumption that a set is $\eta$-closed. It is conceivable that one might be able to prove Conjecture \ref{mainconjecture} (and thereby also give a different proof of the theorem of Khot, Minzer and Safra) by using additional properties of the particular Cayley graph that that conjecture is about.

How, then, might one try to find a conjecture that would not be contradicted by the ``two-layers" example just discussed? One observation that suggests a possible way forward is the following. Suppose that we extend the set by adding a few more layers. If, say, we take not just the middle two layers but the middle $\e^{-1}$ layers (or thereabouts), then we obtain a new set inside which the first set has relative density approximately $2\e$, and this new set is  $(1-2\e)$-closed, since a random element of the set will be in one of the interior layers with probability approximately (and in fact slightly bigger than) $1-2\e$, and adding an arbitrary basis vector to such an element will give another element of the set.

So perhaps we could hope that if $A$ is $(B,\eta)$-closed, then there is a set $C$ that is  $(B,1-\e)$-closed such that $|A\cap C|\geq\d|C|$ for some $\d$ that depends on $\eta$ and $\e$ only.

However, simple modifications of the example show that this is too much to ask. For instance, we can take as our set $A$ the set of all $x\in\F_2^n$ such that $m$ or $m+1$ coordinates are equal to 1 and all but the first $2m$ coordinates are zero. If $m$ is around $n/4$, say, then the resulting set is  $(B,1/4)$-closed, but there is no prospect of $A$ living densely in a set that is almost perfectly closed, because of the need to add basis vectors corresponding to coordinates beyond $2m$. 

A further example to consider is the set of all $x\in\F_2^n$ such that at most $n/3$ coordinates are equal to 1. This set is  $(B,1/3)$-closed (at least -- in fact it is more like $(B,2/3)$-closed because the probability that a random element of the set has exactly $\lfloor n/3\rfloor$ coordinates equal to 1 is approximately 1/2), but for similar reasons to the previous example, one cannot find an almost perfectly closed set with a significant proportion of its elements in the set.

However, the picture changes if we ask for slightly less. Let us informally call a set $C$ \emph{good} if there is a proportional-sized subset $B'\subset B$ such that $C$ is $(B',1-\e)$-good for some small constant $\e$. Thus, now we ask only that $C$ should be almost closed for a \emph{large subset} of $B$ rather than for the whole set. 

It is not immediately clear how to use this definition, because the statement that $|A\cap C|\geq\d|C|$ for a good set $C$ can be true for uninteresting reasons. For example, we could take $C$ to be the union of a subspace $V$ generated by $n/5$ basis vectors together with an arbitrary subset of $A$ of cardinality $2\d|V|$. To remedy this, we insist that $C$ is ``related to $A$" in the graph in a different sense from that of $A$ being dense in $C$.

%To remedy this, we introduce a further definition.

%\begin{definition*}
%Let $G$ be a finite Abelian group and let $A,B$ be subsets of $G$. Say that $A$ is \emph{strongly} $(B,\theta)$-{closed} if for every $a\in A$ the probability, for a random element $b\in B$, that $a+b\in A$ is at least $\theta$.
%\end{definition*}

%For each $a\in A$ let $p(a)$ be the probability for a random $b\in B$ that $a+b\in A$. If $A$ is  $(B,\eta)$-closed, then $p(a)$ averages at least $\eta$. This tells us that if we restrict the Cayley graph with generating set $B$ to $A$, then we obtain a graph with average degree at least $p(a)|A|$. By a standard result, we can then find in $A$ a subset $A'$ where the minimum degree is at least $p(a)|A|/2$ and the average degree is at least $p(a)|A|$. Then $A'$ is strongly $(B,\eta/2)$-closed, and it clearly has size at least $p(a)|A|$. So every  closed set has a large strongly closed subset.

%If $A$ is  $(B,1-c)$-closed for a small constant $c$, then a slightly different argument gives a stronger result. This time $p(a)$ averages at least $1-c$ and has maximum at most 1, so there must be at least $(1-2c)|A|$ points $a\in A$ with $p(a)\geq 1-2c$. If we let $A'$ be the set of all such points, then $A'$ is strongly $(B,1-4c)$-closed.

Here, then, is a question that replaces Question \ref{wrongq}. %\textbf{***It differs from your formulation in} \href{https://cayleygraphproblem.wordpress.com/2017/12/11/brief-thoughts-about-the-optimistic-conjecture/#comment-112}{this blog comment of yours}. \textbf{I don't claim that my formulation is better -- just that I still feel slightly unsure about what a good formulation should be, and I couldn't immediately see why you did without condition (3) below.***}

%\begin{question} \label{mainq}
%Is it true that for every $\eta,\e>0$ there exist $\d>0$ and $k$ with the following property? Let $G$ be a finite Abelian group and let $A,B\subset G$ be subsets with $\|\AA*\BB\|_2^2\geq\eta|A||B|^2$. Then there is a subset $B'\subset B$ and a subset $C\subset G$ with the following properties.
%\begin{enumerate}
%\item $|B'|\geq\d|B|$.
%\item $C$ is  $(B',1-\e)$-closed.
%\item $|A\cap C|\geq\d|C|$.
%\item $C\subset A+kB-kB$.
%\end{enumerate}
%\end{question}

\begin{question} \label{mainq}
Is it true that for every $\eta,\e>0$ there exist $c>0,\d>0$ and positive integer $l$ with the following property? Let $G$ be a finite Abelian group and let $A,B\subset G$ be subsets such that $A$ is $(B,\eta)$-closed. Then there is a subset $B'\subset B$ and a non-empty subset $C\subset G$ with the following properties.
\begin{enumerate}
\item $|B'|\geq\d|B|$.
\item $C$ is  $(B',1-\e)$-closed.
\item $C\subset \bigl\{x\in G: \AA*\mu_B*\dots*\mu_B*\mu_{-B}*\dots*\mu_{-B}(x)\geq c\bigr\}$ where the number of $\mu_B$s and $\mu_{-B}$s in the convolution is $l$.
\end{enumerate}
\end{question}

\noindent Condition (3) is saying that for any $x\in C$, the probability that $x-b_1-\dots-b_l+b_{l+1}+\dots+b_{2l}\in A$, when the $b_i$ are chosen uniformly and independently at random from $B$, is at least $c$. When the group $G$ is $\F_2^n$ for some $n$, we can and will simplify it, since $B=-B$.

To see that this question improves on Question \ref{wrongq}, let us consider the two problematic examples for that question. If $m$ is odd and $A\subset\F_2^n$ consists of all sequences with $(m\pm 1)/2$ 1s and with no 1s after the $m$th coordinate, then let $C$ be the set of all sequences with no 1s after the $m$th coordinate that have between $(m-1)/2-\e^{-1}$ and $(m+1)/2+\e^{-1}$ 1s. If $l=\e^{-1}$, then for any $x\in C$, the probability that $x-b_1-\dots-b_l\in A$ is at least $(\frac{m}{2n})^l$. (This is because conditional on $b_i\in \{e_1,\dots,e_m\}$ this probability is at least $\frac{1}{2^l}$.) Moreover, if $B'=\{e_1,\dots,e_m\}$, then for every $b\in B'$ and every $c\in C$ that is not on the boundary (in the obvious sense), we have that $b+c\in C$, so $C$ is  $(B',1-\e)$-closed.

Now let us look at the example where $A$ is the set of all sequences with at most $n/3$ 1s. This time let $C$ be the set of all sequences that are 0 after the first $2n/3$ coordinates and have at most $n/3+\e^{-1}$ 1s, and let $B'=\{e_1,\dots,e_{2n/3}\}$. Then for any $x\in C$, the probability that $x-b_1-\dots-b_l\in A$ is at least $(\frac{1}{3})^l$, where $l=\e^{-1}$. Moreover, $C$ is  $(1-\e)$-closed, again because adding an element of $B'$ to a non-boundary element of $C$ gives an element of $C$.

%Notice that in these examples the set $B'$ does not depend on $\e$. Thus, it is natural to ask the following question, which is a strengthening of the previous one where we require in addition that $\delta$ does not depend on $\e$.

%\begin{question} \label{strongq}
%	Is it true that for every $\eta>0$ there exists $\delta>0$ such that for every $\e>0$ there exist $c>0$ and a positive integer $l$ with the following property? Let $G$ be a finite Abelian group and let $A,B\subset G$ be subsets such that $A$ is  $(B,\eta)$-closed. Then there is a subset $B'\subset B$ and a non-empty subset $C\subset G$ such that
%	\begin{enumerate}
%		\item $|B'|\geq\d|B|$;
%		\item $C$ is  $(B',1-\e)$-closed;
%		\item $C\subset \bigl\{x\in G: \AA*\mu_B*\dots*\mu_B*\mu_{-B}*\dots*\mu_{-B}(x)\geq c\bigr\}$ where the number of $\mu_B$s and $\mu_{-B}$s in the convolution is $l$.
%	\end{enumerate}
%\end{question} 

\subsection{Our main result}

Let us now see why a positive answer to Question \ref{mainq} would imply Conjecture \ref{mainconjecture}. The deduction will be easy once we have established the following theorem, which is the main result of this paper. In the statement of the theorem, and in the rest of this paper, $\cG$ denotes $\F_2^{n_1}\otimes \dots \otimes \F_2^{n_d}$ and $\cB$ denotes the multiset $\{u_1\otimes \dots \otimes u_d: u_i\in \F_2^{n_i} \text{ for all } i\}$ (which is a multiset only because some of the $u_i$ can be zero). Note that the notion of $(B,\eta)$-closedness can be generalized in an obvious way to multisets.

\begin{theorem} \label{mainresult}
	For any $\theta>0$, there exists $\e=\e(d,\theta)>0$ with the following property. Let $\delta>0$. Then there exists a positive integer $k=k(d,\d)$ with the following property. For any $\cB'\subset \cB$ with $|\cB'|\geq \d |\cB|$ and any $\cA\subset \cG$ which is  $(\cB',1-\e)$-closed, there exists a $k$-simple set $\cD\subset \cG$ such that $|\cD\cap \cA|\geq (1-\theta)|\cD|$.
\end{theorem}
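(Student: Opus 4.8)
The plan is to move to the Fourier side, observe that the closedness hypothesis forces $\widehat{\b1_{\cA}}$ to live almost entirely on the large spectrum of the characteristic measure $\mu_{\cB'}$, establish a structural lemma identifying that spectrum with the annihilator of a $k$-simple subspace, and then read off the conclusion by a standard averaging argument. Throughout write $f=\b1_{\cA}$ and let $\alpha=\|f\|_2^2$ denote the density of $\cA$.

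\textbf{Step 1 (Fourier localisation).} The hypothesis that $\cA$ is $(\cB',1-\e)$-closed says precisely that $\langle f*\mu_{\cB'},f\rangle\ge(1-\e)\alpha$, so Cauchy--Schwarz gives $\|f*\mu_{\cB'}\|_2^2\ge(1-\e)^2\alpha$. Passing to the Fourier transform and using $\sum_{\xi}|\widehat f(\xi)|^2=\alpha$ together with $|\widehat{\mu_{\cB'}}|\le 1$ gives $\sum_{\xi}|\widehat f(\xi)|^2\bigl(1-|\widehat{\mu_{\cB'}}(\xi)|^2\bigr)\le 2\e\alpha$, hence, writing $S:=\{\xi:|\widehat{\mu_{\cB'}}(\xi)|^2\ge 1-\sqrt\e\}$,
\[\sum_{\xi\notin S}|\widehat f(\xi)|^2\le 2\sqrt\e\,\alpha .\]
Identifying $\widehat{\cG}$ with the space of $\F_2$-multilinear forms in the natural way, $\xi\in S$ means exactly that the form $\xi$ is constant (equal to $0$, or equal to $1$) on all but a $\sqrt\e$-fraction of the tuples $(u_1,\dots,u_d)$ representing elements of $\cB'$; discarding the ``mostly $1$'' alternative at the cost of a bounded loss, we think of $S$ as the set of multilinear forms that \emph{vanish on almost all of $\cB'$}. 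We also note the reduction that, for any $\cB''\s\cB'$ with $|\cB''|\ge c|\cB'|$, the set $\cA$ is $(\cB'',1-\e/c)$-closed, so at several points we may freely pass to a constant fraction of $\cB'$.

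\textbf{Step 2 (the structural lemma — the heart).} The claim is that there is a $k$-simple subspace $\cD_0\s\cG$ with $k=k(d,\delta)$ (also depending on the $\e$ fixed at the outset) such that every $\xi\in S$ annihilates $\cD_0$; equivalently, $\langle S\rangle$ is contained in the annihilator of a $k$-simple subspace, i.e.\ in a sum $\sum_{\emptyset\ne I\s[d]}\bigl(G_I\otimes\F_2^{I^c\,*}\bigr)$ with $\dim G_I\le k$. I would prove this by induction on $d$. For $d=1$ the forms in $S$ are functionals along which $\cB'$ is almost perfectly biased; since $\sum_{\xi\ne 0}|\widehat{\mu_{\cB'}}(\xi)|^2=\|\mu_{\cB'}\|_2^2-1\le \delta^{-1}$ while each $\xi\in S$ contributes at least $1-\sqrt\e$, the set $S$ spans a subspace of dimension $O(\delta^{-1})$, so $S^{\perp}$ has codimension $O(\delta^{-1})$ in $\F_2^{n_1}$ and is $O(\delta^{-1})$-simple. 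For the inductive step, factor $\cG=\F_2^{[d-1]}\otimes\F_2^{n_d}$ and for $w\in\F_2^{n_d}$ let $\cB'_w\s\cB^{[d-1]}$ be the fibre consisting of those $(u_1,\dots,u_{d-1})$ with $(u_1,\dots,u_{d-1},w)$ representing an element of $\cB'$. Fibres of density below $\delta/2$ carry at most half of $\cB'$, so after discarding them (worsening $\delta$ and the error parameter by a factor $2$ via the reduction above) every fibre is dense. Then for almost every $w$ the slice $\xi_w$, obtained by plugging $w$ into the last coordinate of $\xi$, vanishes on almost all of $\cB'_w$, so the inductive hypothesis applied to $\cB'_w$ provides a bounded-complexity subspace $\cE(\cB'_w)\s\F_2^{[d-1]\,*}$ — a sum $\sum_{\emptyset\ne J\s[d-1]}\bigl(G_J^{(w)}\otimes\F_2^{([d-1]\setminus J)\,*}\bigr)$ with $\dim G_J^{(w)}\le k_{d-1}$ — containing $\xi_w$. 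The essential difficulty, which I expect to be the main obstacle, is to \emph{synchronise} these fibrewise structures: one must produce a single family $(G_J)_J$ of bounded-dimensional subspaces, depending only on $\cB'$, with $\cE(\cB'_w)\s\cE:=\sum_{J}\bigl(G_J\otimes\F_2^{([d-1]\setminus J)\,*}\bigr)$ for all but a $\sqrt\e$-fraction of $w$. Heuristically this should hold because a dense subset of $\cB$ can carry only a bounded amount of ``multilinear structure'' (in the examples, the fibrewise data assemble into a bounded-complexity description of $\cB'$ itself), but turning this into a proof — for instance by bounding $\dim\sum_{\xi\in S}\mathrm{Slice}_J(\xi)$ uniformly in $J$, or by restricting the last coordinate to a large subspace on which the fibrewise structures stabilise — is where the real work lies. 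Granting the synchronisation, linearity of the map $w\mapsto\xi_w$ makes $\{w:\xi_w\in\cE\}$ a subspace of $\F_2^{n_d}$ of density exceeding $1/2$, hence equal to all of $\F_2^{n_d}$, so $\xi\in\cE\otimes\F_2^{n_d\,*}$, which is again a sum of pieces of the required shape; this closes the induction.

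\textbf{Step 3 (conclusion).} Let $\cD_0\s S^{\perp}$ be the $k$-simple subspace from Step 2, let $g$ be the function constant on each coset $C$ of $\cD_0$ with value $\alpha_C$, the density of $\cA$ in $C$, and recall that $\|f-g\|_2^2$ is exactly the Fourier mass of $f$ lying off $\cD_0^{\perp}$. Since every $\xi\in S$ annihilates $\cD_0$ we have $S\s\cD_0^{\perp}$, so that mass is at most the mass of $f$ lying off $S$, which by Step 1 is at most $2\sqrt\e\,\alpha$; a direct expansion of $\|f-g\|_2^2$ then yields
\[\frac{1}{|\cG|}\sum_{C}|C|\,\alpha_C(1-\alpha_C)\le 2\sqrt\e\,\alpha .\]
Comparing this with $\frac{1}{|\cG|}\sum_C|C|\alpha_C=\alpha>0$ produces a coset $C$ with $\alpha_C>0$ and $1-\alpha_C\le 2\sqrt\e$. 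Choosing $\e=\e(d,\theta)$ small enough that $2\sqrt\e<\theta$ (and small enough for the needs of Steps 1--2), this coset $C$ is a translate of the $k$-simple subspace $\cD_0$, hence itself $k$-simple, and $|\cA\cap C|\ge(1-\theta)|C|$, as required.
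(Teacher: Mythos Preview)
Your overall architecture---Fourier localisation, a structural lemma placing the large spectrum inside the annihilator of a $k$-simple subspace, then averaging over cosets---is exactly the paper's strategy, and your Steps~1 and~3 are essentially correct and match the paper's wrapping argument. The problem is Step~2: you correctly identify the synchronisation of the fibrewise structures $G_J^{(w)}$ as ``the essential difficulty'' and ``where the real work lies,'' and then simply grant it. That step is the entire content of the theorem; everything else is soft.

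The paper does \emph{not} attempt a direct induction on the raw set $\cB'$ as you propose. Instead it first replaces $\cB'$ by an auxiliary multiset $Q$ built from bounded sums of elements of $\cB'$ (the closedness hypothesis transfers to $Q$ via a triangle-inequality lemma). The gain is that $Q$ can be forced to have a rigid algebraic shape: repeated use of Bogolyubov's lemma produces what the paper calls an $l$-system, roughly $Q=\bigcup_{s\in Q'}(s\otimes Q_s)$ with $Q'$ and each $Q_s$ themselves structured and of the same type one dimension down. The synchronisation is then carried out not fibre-by-fibre over $\F_2^{n_d}$ but over the structured index set $Q'$, and this extra rigidity is what makes it work. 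Concretely, a pigeonhole argument shows that among boundedly many candidate $r$'s, two must differ by a tensor of bounded partition rank (this is where the $l$-system structure is used), reducing to the ``degenerate'' case $r\in\sum_{J}\F_2^{J}\otimes H_{J^c}(r)$ with $\dim H_{J^c}(r)$ bounded; then a separate lemma shows that for such $r$, one $r$-dependent piece $H_{I^c}(r)$ at a time can be replaced by an $r$-independent subspace, and iterating over all $I$ in a suitable order removes all dependence on $r$. Your bare fibrewise picture over an unstructured $\cB'$ gives no handle on either of these steps, which is why the paper takes the detour through~$Q$.
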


\noindent It is convenient to state the following corollary separately, which follows from Theorem \ref{mainresult} by taking $\theta=1/2$.

\begin{corollary} \label{maincorollary}
	There exists $\e=\e(d)>0$ such that for any $\delta>0$, there exists a positive integer $k=k(d,\d)$ with the following property. For any $\cB'\subset \cB$ with $|\cB'|\geq \d |\cB|$ and any $\cA\subset \cG$ which is  $(\cB',1-\e)$-closed, there exists a $k$-simple set $\cD\subset \cG$ which has $|\cD\cap \cA|\geq \frac{1}{2}|\cD|$.
\end{corollary}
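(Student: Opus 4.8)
The plan is to derive this directly from Theorem~\ref{mainresult}, which is already at our disposal. Concretely, I would simply fix the parameter $\theta=1/2$ in that theorem. This yields a constant $\e=\e(d,1/2)>0$; since the only data entering this constant are $d$ and $\theta$, and $\theta$ is now a fixed absolute number, $\e(d,1/2)$ is a function of $d$ alone, which is exactly the quantity the corollary names $\e(d)$. Then, for any $\delta>0$, Theorem~\ref{mainresult} hands us a positive integer $k=k(d,\delta)$ with the required property, and the conclusion $|\cD\cap\cA|\ge(1-\theta)|\cD|$ of that theorem reads $|\cD\cap\cA|\ge\tfrac12|\cD|$, which is precisely what the corollary asserts.

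The one thing worth verifying is that the order of quantifiers is respected. In Theorem~\ref{mainresult} the constant $\e$ is produced first and depends only on $d$ and $\theta$, while $k$ is produced afterwards and depends on $d$ and $\delta$; in the corollary $\e$ likewise depends only on $d$ and is fixed before $\delta$ is introduced, and $k=k(d,\delta)$ comes after. Substituting the fixed value $\theta=1/2$ alters none of this, so the deduction goes through verbatim, and the rest of the hypothesis ($\cB'\subset\cB$ with $|\cB'|\ge\delta|\cB|$, and $\cA$ being $(\cB',1-\e)$-closed) and the $k$-simplicity of $\cD$ are carried over unchanged.

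So the corollary itself carries no independent difficulty: all the mathematical content lives in Theorem~\ref{mainresult}. The genuine obstacle is the proof of that theorem — turning the hypothesis that $\cA$ is $(\cB',1-\e)$-closed for a proportional-sized sub-multiset $\cB'\subset\cB$ into a $k$-simple set $\cD$ inside which $\cA$ has density $1-\theta$ — where the tensor structure of $\cG=\F_2^{n_1}\otimes\cdots\otimes\F_2^{n_d}$ and of the generating multiset $\cB$ must be exploited. None of that is needed for the present statement, however: taking $\theta=1/2$ completes the proof of the corollary.
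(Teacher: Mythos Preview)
Your proposal is correct and matches the paper's own justification exactly: the corollary is stated as following from Theorem~\ref{mainresult} by taking $\theta=1/2$, which is precisely what you do, and your check of the quantifier order is sound.
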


\noindent Let us see why Conjecture \ref{mainconjecture} follows from Corollary \ref{maincorollary} and a positive answer to Question \ref{mainq} in the case of the group $\cG$ and the subset $B\s\cG$ of rank-1 tensors. Let $\eta>0$. Pick $\e=\e(d)$ so that the conclusion of Corollary \ref{maincorollary} holds. If the answer to Question \ref{mainq} is positive for $\cG$ and $B$, then we can choose $c>0,\d>0$, and a positive integer $l$ such that the conclusion of the question is true. Now let $\cA\s \cG$ be $\eta$-closed. This is saying that $\cA$ is $(B,\eta)$-closed. By the conclusion of Question \ref{mainq}, there exist a set $B'\s B$ with $|B'|\geq \d|B|$ , and a non-empty subset $\cC\s \cG$ such that $\cC$ is $(B',1-\e)$-closed and $\cC\subset \bigl\{x\in \cG: \b1_{\cA}*\mu_B*\dots*\mu_B(x)\geq c\bigr\}$, where the number of $\mu_B$s in the convolution is $l$. Define $\cB'$ to be the multiset that consists of the set $B'$ together with the multiset of all $u_1\otimes \dots \otimes u_d$ with $u_i\in \F_2^{n_i}$ for each $i$ and with at least one $u_i$ equal to $0$. Note that $|\cB'|\geq \d|\cB|$ and $\cC$ is $(\cB',1-\e)$-closed. By Corollary \ref{maincorollary}, there exists a $k$-simple set $\cD\s \cG$, for some $k=k(d,\d)$, which has $|\cD\cap \cC|\geq \frac{1}{2}|\cD|$. Now pick $x\in \cD$ and $b_1,\dots,b_l \in B$ uniformly and independently at random. The probability that $x-b_1-\dots-b_l\in \cA$ is at least $c/2$. Therefore, there exists some $y\in \cG$ such that when $x\in \cD$ is randomly chosen, the probability that $x-y\in \cA$ is at least $c/2$. That is, $|(\cD-y)\cap \cA|\geq \frac{1}{2}c|\cD|=\frac{1}{2}c|\cD-y|$. But $\cD-y$ is a $k$-simple set, which finishes the proof of Conjecture \ref{mainconjecture}.

Another simple corollary of Theorem \ref{mainresult} is the following result, which is Conjecture \ref{mainconjecture} in the case where $\cA$ is a subspace.

\begin{corollary} \label{spacecorollary}
	For any $\eta>0$ and any positive integer $d$, there exists some $k=k(d,\eta)$ with the following property. If $V\s \cG$ is a subspace which is $\eta$-closed, then $V$ contains a $k$-simple subspace. That is, $V\supset \bigcap_{I\s \lbrack d\rbrack,I\neq \emptyset}(H_I\otimes \F_2^{I^c})$ for some collection of subspaces $H_I\s \F_2^{I}$ of codimension at most $k$.
\end{corollary}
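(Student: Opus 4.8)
The plan is to reduce the statement directly to Theorem~\ref{mainresult}, exploiting the fact that for a \emph{subspace} the notion of closedness collapses to a pure density condition.

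First I would observe that, since $V$ is a subspace, for $A\in V$ the membership $A+u_1\otimes\dots\otimes u_d\in V$ depends only on $u_1\otimes\dots\otimes u_d$: it holds if and only if $u_1\otimes\dots\otimes u_d\in V$. Hence the statement that $V$ is $\eta$-closed is just the statement that $u_1\otimes\dots\otimes u_d\in V$ for at least a proportion $\eta$ of the tuples $(u_1,\dots,u_d)$ of nonzero vectors; equivalently, at least $\eta\prod_i(2^{n_i}-1)$ of the rank-$1$ tensors lie in $V$. I would then let $\cB'$ be the sub-multiset of $\cB$ consisting of all tuples $(u_1,\dots,u_d)$ whose tensor lies in $V$; this includes every degenerate tuple, since the zero tensor lies in $V$, so, counting the nonzero tuples alone,
\[|\cB'|\ \ge\ \eta\prod_i(2^{n_i}-1)\ \ge\ 2^{-d}\eta\prod_i 2^{n_i}\ =\ 2^{-d}\eta\,|\cB|.\]
Moreover every element of $\cB'$ lies in the subspace $V$, so for every $A\in V$ and every $b\in\cB'$ we have $A+b\in V$; that is, $V$ is $(\cB',1)$-closed, hence $(\cB',1-\e)$-closed for every $\e>0$.

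Next I would apply Theorem~\ref{mainresult} with $\theta=1/4$ to obtain $\e=\e(d,1/4)>0$, and then apply it with $\d=2^{-d}\eta$ (so that $k=k(d,\d)=k(d,\eta)$), with $\cB'$ as above and $\cA=V$: this yields a $k$-simple set $\cD\subset\cG$ with $|\cD\cap V|\ge\tfrac34|\cD|$. Write $\cD=z+\cD_0$, where $\cD_0=\bigcap_{\emptyset\ne I\subset[d]}(H_I\otimes\F_2^{I^c})$ is a $k$-simple subspace. Since $|\cD\cap V|>0$, the set $\cD\cap V$ is nonempty, and being the intersection of the coset $z+\cD_0$ with the subspace $V$ it is a coset of the subspace $\cD_0\cap V$; hence $|\cD_0\cap V|=|\cD\cap V|\ge\tfrac34|\cD_0|$. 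But $\cD_0\cap V$ is an $\F_2$-subspace of $\cD_0$, so $[\cD_0:\cD_0\cap V]$ is a power of $2$, and being at most $4/3$ it equals $1$. Therefore $\cD_0\subset V$, which exhibits the required $k$-simple subspace inside $V$.

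I do not expect any genuine obstacle here; the entire content is carried by Theorem~\ref{mainresult}. The only points that need a little care are verifying that throwing in the degenerate tuples does not spoil the lower bound on $|\cB'|$, and the translate/coset bookkeeping at the end — and the choice $\theta<1/2$ (rather than invoking Corollary~\ref{maincorollary}, which has $\theta=1/2$) is precisely what forces $\cD_0\cap V=\cD_0$ instead of merely an index-$2$ subspace. If one preferred to work from Corollary~\ref{maincorollary}, the index-$2$ case can still be dealt with: the subspace $\cD_0\cap V$ is cut out of $\cD_0$ by a single linear functional, which can be absorbed into the top-level subspace $H_{[d]}$, at the cost of replacing $k$ by $k+1$.
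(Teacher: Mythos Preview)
Your proposal is correct and follows essentially the same approach as the paper: both reduce to Theorem~\ref{mainresult} by observing that for a subspace $V$ the closedness hypothesis becomes a density statement about rank-$1$ tensors lying in $V$, yielding a $\cB'$ for which $V$ is $(\cB',1)$-closed, and then use the subspace structure to pass from the $k$-simple coset $\cD$ to the underlying $k$-simple subspace $\cD_0\subset V$. The paper phrases the last step as ``take $\theta$ sufficiently close to $0$ so that $V\supset\cD$, hence $V\supset\cD_0$'', while you make the index/power-of-$2$ argument explicit with $\theta=1/4$; and your density bound $|\cB'|\ge 2^{-d}\eta|\cB|$ is slightly weaker than the paper's $|\cB'|\ge\eta|\cB|$ (which also holds once the degenerate tuples are included), but this is immaterial.
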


\begin{proof}
	Since $V$ is a vector space, the condition that $V$ is $\eta$-closed says that $u_1\otimes \dots \otimes u_d\in V$ for at least a proportion of $\eta$ of all rank-1 tensors $u_1\otimes \dots \otimes u_d$. Thus, there exists some $\cB'\s \cB$ with $|\cB'|\geq \eta |\cB|$ such that $V$ is $(\cB',1)$-closed. Taking $\theta$ sufficiently close to $0$ in Theorem \ref{mainresult}, it follows that $V\supset \cD$ for a $k$-simple set $\cD$, where $k$ depends only on $d$ and $\eta$. Then $\cD$ is a translate of $\bigcap_{I\s \lbrack d\rbrack,I\neq \emptyset}(H_I\otimes \F_2^{I^c})$ for some $H_I\s \F_2^{I}$ of codimension at most $k$. Since $V$ is a vector space, it follows that $V\supset \bigcap_{I\s \lbrack d\rbrack,I\neq \emptyset}(H_I\otimes \F_2^{I^c})$.
\end{proof}

\noindent In the next section, we shall prove Theorem \ref{mainresult}. In the last section, we show that the answer to Question \ref{mainq} is negative.

%\noindent In the next section we shall prove that a positive answer to Question \ref{strongq} would imply Theorem \ref{KMS}, and in the section after that we shall show that the answer to Question \ref{mainq}, and consequently to Question \ref{strongq}, is in fact negative. In the final section, we consider whether it might be possible to weaken the question even further in order to obtain a true generalization of Theorem \ref{KMS} to all Cayley graphs on finite Abelian groups. \textbf{*** This final section remains to be added. ***}

\section{The proof of Theorem \ref{mainresult}}

%In this section, unless specified otherwise, $\cG$ will be the group $\F_2^{n_1}\otimes\dots\otimes\F_2^{n_d}$ and $\cB$ will be the multiset $\{u_1\otimes\dots\otimes u_d:u_i\in \F_2^{n_i} \text{ for all } i\}$.

Note that $\cG=\F_2^{n_1}\otimes\dots\otimes\F_2^{n_d}$ can be viewed as the set of $d$-dimensional $(n_1,\dots,n_d)$-arrays over $\F_2$ which in turn can be viewed as $\F_2^{n_1n_2\dots n_d}$, equipped with the entry-wise dot product.

%\begin{theorem} \label{mainresult}
%	For any $\theta>0$, there exists $\e=\e(d,\theta)>0$ with the following property. Let $\delta>0$. Then there exists a positive integer $k=k(d,\d)$ with the following property. For any $\cB'\subset \cB$ with $|\cB'|\geq \d |\cB|$ and any $\cA\subset \cG$ which is  $(\cB',1-\e)$-closed, there exists a $k$-simple set $\cD\subset \cG$ which has $|\cD\cap \cA|\geq (1-\theta)|\cD|$.
%\end{theorem}

The proof of Theorem \ref{mainresult} will be reasonably simple once we have established the following result. In the statement of this lemma, and in the rest of this section, we write $kB$ to mean the set of elements of $\cG$ that can be written as a sum of \textit{at most} $k$ elements of $B$, where $B$ is some fixed (multi)subset of $\cG$.

%\textbf{***It's slightly strange, but I've interchanged the indices 2 and 3 in the names of the functions $f_i$ for consistency with the definition of forcing later on.***}

\begin{lemma} \label{mainlemma}
	For all $\d>0$ and $d\in\N$ there exist $f_1(d),f_3(d)>0$ and $f_2(d,\d)\in\N$ with the following property. Let $\cB'\subset \cB$ be a multiset such that $|\cB'|\geq \d |\cB|$. Then there exists a multiset $Q$ whose elements are chosen from $f_1(d)\cB'$ (but with arbitrary multiplicity) with the following property. The set of arrays $r\in \cG$ with $r.q=0$ for at least $(1-f_3(d))|Q|$ choices $q\in Q$ is contained in $\sum_{I\subset \lbrack d\rbrack, I\neq \emptyset} V_I\otimes \F_2^{I^c}$ for a collection of subspaces $V_I\subset \F_2^{I}$ that each have dimension at most $f_2(d,\d)$.
\end{lemma}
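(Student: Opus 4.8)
The plan is to argue by induction on $d$. Throughout, identify an array $r\in\cG$ with the $d$-linear form $\phi_r$ on $\F_2^{n_1}\times\dots\times\F_2^{n_d}$ given by $r\cdot(u_1\otimes\dots\otimes u_d)=\phi_r(u_1,\dots,u_d)$, so that $r\cdot q=\sum_j\phi_r(u_1^{(j)},\dots,u_d^{(j)})$ when $q=\sum_j u_1^{(j)}\otimes\dots\otimes u_d^{(j)}$. It helps to note the dual form of the conclusion: since $\bigl(\sum_{\emptyset\neq I\subseteq[d]}V_I\otimes\F_2^{I^c}\bigr)^{\perp}=\bigcap_{\emptyset\neq I\subseteq[d]}V_I^{\perp}\otimes\F_2^{I^c}$ is $k$-simple whenever every $\dim V_I\le k$, the conclusion says precisely that the near-annihilators of $Q$ all lie in the orthogonal complement of a single $f_2(d,\d)$-simple subspace of $\cG$.

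For $d=1$, $\cB'\subseteq\F_2^{n_1}$ has density at least $\d$, so by Bogolyubov's lemma $4\cB'$ contains a subspace $\cS$ of codimension $O(\d^{-2})$; take $Q=\cS$ with multiplicity one, $f_1(1)=4$, $f_3(1)=1/4$. A non-zero linear functional on $\cS$ vanishes on exactly half of $\cS$, so any $r$ vanishing on at least $\tfrac34|\cS|$ of $\cS$ vanishes on all of it, i.e. $r\in\cS^{\perp}=:V_{\{1\}}$, and $\dim V_{\{1\}}\le f_2(1,\d):=O(\d^{-2})$.

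For the inductive step I would slice in the last coordinate. For $w\in\F_2^{n_d}$ set $\cB'_w=\{u_1\otimes\dots\otimes u_{d-1}:u_1\otimes\dots\otimes u_{d-1}\otimes w\in\cB'\}$, a multisubset of the rank-$1$ tensors of $\cG'=\F_2^{n_1}\otimes\dots\otimes\F_2^{n_{d-1}}$; at least $\d/2$ of the $w$ have $\cB'_w$ of density at least $\d/2$, and by Cauchy--Schwarz at least $\d^2/2$ of the pairs $(w,w')$ have $\cB'_w\cap\cB'_{w'}$ of density at least $\d^2/2$, and similarly for tuples of size up to some $t(d)$. Apply the inductive hypothesis (with the appropriate power of $\d$) to each good $\cB'_w$ and each good $\cB'_{w_1}\cap\dots\cap\cB'_{w_t}$: this gives configurations $Q_w\subseteq f_1(d-1)\cB'_w$ and $Q_{w_1,\dots,w_t}\subseteq f_1(d-1)(\cB'_{w_1}\cap\dots\cap\cB'_{w_t})$ whose near-annihilators in $\cG'$ lie in structured subspaces $W_w$, respectively $W_{w_1,\dots,w_t}$, of the form $\sum_{\emptyset\neq J\subseteq[d-1]}V_J\otimes\F_2^{[d-1]\setminus J}$ with all $\dim V_J$ bounded in terms of $d$ and $\d$ (the $V_J$ depending on the slice or tuple). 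After rescaling multiplicities so that all these configurations have a common size, let $Q$ be the union of all the sets $Q_w\otimes w$ and $Q_{w_1,\dots,w_t}\otimes(w_1+\dots+w_t)$. A sum of $s\le f_1(d-1)$ elements of $\cB'_{w_1}\cap\dots\cap\cB'_{w_t}$ tensored with $w_1+\dots+w_t$ is a sum of $st$ elements of $\cB'$, so every element of $Q$ is a sum of at most $t(d)f_1(d-1)$ elements of $\cB'$; thus $f_1(d):=t(d)f_1(d-1)$ works. Writing $r\cdot(q'\otimes w)=(r\cdot_d w)\cdot q'$, where $r\cdot_d w\in\cG'$ is the contraction of $r$ against $w$ in the last coordinate, and taking $f_3(d)$ a small enough multiple of $f_3(d-1)$, a Markov-type averaging over the equal-sized pieces of $Q$ shows that any $r$ vanishing on at least $(1-f_3(d))|Q|$ of $Q$ has $r\cdot_d w\in W_w$ for a positive proportion (depending only on $d$ and $\d$) of $w$, and $(r\cdot_d w_1)+\dots+(r\cdot_d w_t)\in W_{w_1,\dots,w_t}$ for a positive proportion of the good tuples.

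The crux, which I expect to be the main obstacle, is to deduce from this slice information that $r$ itself lies in the orthogonal complement of a bounded-codimension simple subspace. There is an easy lifting lemma once things are clean: suppose there are a \emph{fixed} subspace $U_d\subseteq\F_2^{n_d}$ of bounded codimension and a \emph{fixed} structured subspace $\bar W=\sum_{\emptyset\neq J\subseteq[d-1]}\bar V_J\otimes\F_2^{[d-1]\setminus J}$ (with $\dim\bar V_J$ bounded) such that $r\cdot_d w\in\bar W$ for more than half of $w\in U_d$. Then $\{w\in U_d:r\cdot_d w\in\bar W\}$, being a subspace of density $>\tfrac12$ in $U_d$, equals $U_d$; writing $r=\sum_i g_i\otimes h_i$ with linearly independent $h_i\in\F_2^{n_d}$ of which an initial segment $h_1,\dots,h_s$ spans $H\cap U_d^{\perp}$ (where $H$ is the span of all the $h_i$), one gets $g_i\in\bar W$ for $i>s$, hence $r\in\cG'\otimes(H\cap U_d^{\perp})+\bar W\otimes\F_2^{n_d}\subseteq U_d^{\perp}\otimes\F_2^{[d-1]}+\sum_{\emptyset\neq I\subseteq[d-1]}\bar V_I\otimes\F_2^{I^c}$, which is of the required form (the first summand being the $I=\{d\}$ piece, of bounded dimension since $\dim U_d^{\perp}$ is bounded). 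The hard part is to \emph{reach} this clean situation: a priori the subspaces $W_w$ depend on $w$ and the slice statements hold only for positive-density sets of directions rather than whole subspaces. This is the purpose of the multi-slice batches: the relations $(r\cdot_d w_1)+\dots+(r\cdot_d w_t)\in W_{w_1,\dots,w_t}$ should force the structured subspaces occurring as the $W_w$ to be ``compatible'' along the good directions, so that, after a pigeonhole bounding the number of relevant types of structured subspace and an application of Bogolyubov's lemma in the last coordinate to the set of good directions, all of the relevant contractions $r\cdot_d w$ can be placed inside one fixed $\bar W$ over one fixed bounded-codimension $U_d$. Carrying this aggregation out rigorously --- in particular keeping $t(d)$, hence $f_1(d)$, independent of $\d$ --- is the substantive part; everything else is bookkeeping with constants depending only on $d$ and $\d$.
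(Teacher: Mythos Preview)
Your outline sets up the induction sensibly and the base case is fine, but the proposal has a genuine gap exactly where you say it does: the ``aggregation'' of the slice data is the entire content of the lemma, and the mechanism you sketch does not carry it. The obstruction is that the structured subspaces $W_w=\sum_{\emptyset\ne J\subseteq[d-1]}V_J(w)\otimes\F_2^{[d-1]\setminus J}$ are fixed by your choice of $Q_w$, not by $r$, and there is no pigeonhole available on the ``type'' of $W_w$: the number of such subspaces grows with $n_1,\dots,n_{d-1}$, so you cannot find many $w$ with a common $\bar W$. Your multi-slice constraints $(r\cdot_d w_1)+\dots+(r\cdot_d w_t)\in W_{w_1,\dots,w_t}$ relate the \emph{slices of $r$}, but they do not relate the subspaces $W_w$ themselves, so they do not force compatibility in the sense you need. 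And even if you had a common $\bar W$, your ``easy lifting lemma'' needs $r\cdot_d w\in\bar W$ for more than half of a subspace $U_d$; Bogolyubov applied to the set of good $w$'s does not help here, because nothing tells you that the good set is closed under addition once the $W_w$ vary with $w$. Finally, note that the conclusion of your lifting lemma uses only the pieces $V_I$ with $I=\{d\}$ or $I\subseteq[d-1]$; already for $d=2$ the paper shows one genuinely needs a $V_{[d]}$ term, so the target you are aiming at is too strong.

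The paper's proof resolves exactly this difficulty, but by a different route than the one you propose. Instead of trying to align the $W_w$'s, it first proves (Lemma~\ref{focusondeg}) that amongst any $2^{k+3}$ near-annihilators, two must differ by a \emph{$k$-degenerate} tensor: the pigeonhole is done not on the subspaces $W_w$ but on the values $r_i\cdot_d t\in U_t^{\perp}$, and then the fact that $(r_i-r_j)t=0$ for many rank-one $t$ is upgraded to low partition rank by an analytic-rank--type argument that itself invokes the induction hypothesis for $d-1$. This buys a decomposition $r=x+y$ with $x$ in a single bounded-dimensional $V_{[d]}$ and $y$ degenerate, i.e.\ $y\in\sum_{\emptyset\ne I\subsetneq[d]}\F_2^I\otimes H_{I^c}(r)$ with $\dim H_{I^c}(r)$ bounded but \emph{$r$-dependent}. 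Removing that $r$-dependence is then done one subset $I$ at a time (Lemma~\ref{keylemma}), and for this step the paper needs a strengthened induction hypothesis (H$'$): one must be able to choose the lower-dimensional $Q'$ inside prescribed bounded-codimension subspaces $L_J\otimes\F_2^{I\setminus J}$ so as to kill the already-controlled pieces $W_J\otimes\F_2^{J^c}$ before analysing $r_4\in\F_2^I\otimes H_{I^c}(r)$. Neither the degeneracy reduction nor the strengthened hypothesis is present in your plan, and I do not see how to complete the aggregation without something of that kind.
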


In order to deduce Theorem \ref{mainresult} from this lemma, we shall use Fourier analysis. Recall that if $A$ is a subset of $\cG$ of density $\a$, then by Parseval's identity we have $\a=\sum_r|\widehat{\AA}(r)|^2$. Also, if $B$ is a multiset in $\cG$, then by Parseval's identity and the convolution law, $\langle \AA*\mu_B,\AA\rangle=\sum_r|\widehat{\AA}(r)|^2\widehat\mu_B(r)$ (for a multiset $B$, we define $\mu_B(x)=\frac{|G|}{|B|}B(x)$ where $B(x)$ is the multiplicity of $x$ in $B$). Thus, the condition that $A$ is $(B,\eta)$-closed can be rewritten as the inequality 
\[\sum_r|\widehat{\AA}(r)|^2\widehat\mu_B(r)\geq\eta\sum_r|\widehat{\AA}(r)|^2.\]
Another fact we shall use later is that if $W$ is a subspace of $\cG$, then $\widehat{\mu_W}(r)$ equals $\E_{w\in W}(-1)^{r.w}$, which is 1 if $r$ belongs to the orthogonal complement of $W$ and 0 otherwise.

\begin{lemma}\label{triangle}
	Let $G$ be an Abelian group, let $A\subset G$ be a finite subset, let $\eta_1,\eta_2>0$, and let $b_1,b_2\in G$. Suppose that $A$ is $(\{b_1\},1-\eta_1)$-closed and $(\{b_2\},1-\eta_2)$-closed in $G$. Then $A$ is $(\{b_1+b_2\},1-\eta_1-\eta_2)$-closed in $G$.
\end{lemma}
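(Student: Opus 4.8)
The plan is to unwind the combinatorial definition of closedness and finish with a one-line union bound — this is exactly the ``triangle inequality'' that the name of the lemma suggests. For $b\in G$ write $S_b=\{a\in A: a+b\notin A\}$. Then $A$ being $(\{b\},1-\eta)$-closed in $G$ is precisely the assertion that $|S_b|\le\eta|A|$, so the hypotheses say $|S_{b_1}|\le\eta_1|A|$ and $|S_{b_2}|\le\eta_2|A|$, and the conclusion to be proved is $|S_{b_1+b_2}|\le(\eta_1+\eta_2)|A|$.

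The only real content is the pointwise containment
\[S_{b_1+b_2}\subseteq S_{b_1}\cup(S_{b_2}-b_1).\]
To see this, take $a\in A$ with $a+b_1+b_2\notin A$. If $a+b_1\notin A$ then $a\in S_{b_1}$. Otherwise $a':=a+b_1\in A$, and since $a'+b_2=a+b_1+b_2\notin A$ we get $a'\in S_{b_2}$, i.e.\ $a\in S_{b_2}-b_1$.

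Since $x\mapsto x-b_1$ is a bijection of $G$, we have $|S_{b_2}-b_1|=|S_{b_2}|$, and therefore
\[|S_{b_1+b_2}|\le|S_{b_1}|+|S_{b_2}|\le(\eta_1+\eta_2)|A|,\]
which is the claim. There is essentially no obstacle here: the statement is robust in the degenerate cases (if $\eta_1+\eta_2\ge 1$, or if $b_1+b_2=0$, it is trivially true), and it extends immediately by induction to sums $b_1+\dots+b_m$, giving that $A$ is $(\{b_1+\dots+b_m\},1-\eta_1-\dots-\eta_m)$-closed — which is presumably how it will be applied when chaining closedness along several generators drawn from $\cB'$.
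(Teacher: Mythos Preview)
Your proof is correct and is essentially identical to the paper's: the paper sets $A_{\text{bad}}=\{a\in A:a+b_2\notin A\}$ (your $S_{b_2}$) and bounds $\P[a+b_1+b_2\notin A]\le\P[a+b_1\notin A]+\P[a+b_1\in A_{\text{bad}}]\le\eta_1+\eta_2$, which is exactly your containment $S_{b_1+b_2}\subseteq S_{b_1}\cup(S_{b_2}-b_1)$ phrased probabilistically. Your remark about the inductive extension to $b_1+\dots+b_m$ is also precisely how the lemma is used in the paper.
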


\begin{proof}
	Let $A_{\text{bad}}=\{a\in A: a+b_2\not \in A\}$. Then $|A_{\text{bad}}|\leq \eta_2|A|$, by hypothesis. So when $a\in A$ is chosen randomly, we have that 
	\[\mathbb{P}\lbrack a+b_1+b_2\not \in A \rbrack \leq \mathbb{P}\lbrack a+b_1\not\in A\rbrack+\mathbb{P}\lbrack a+b_1\in A_{\text{bad}}\rbrack\leq \eta_1+\eta_2.\]
	The result follows.
\end{proof}

We are now in a position to deduce Theorem \ref{mainresult} from Lemma \ref{mainlemma}. In the proof, and in the rest of this section, whenever a new function $g_i$ appears, we mean that there exists a function $g_i$ with the claimed property.

\begin{proof}[Proof of Theorem \ref{mainresult}]
	Let $\theta,\d>0$, $\cB'\s \cB$ with $|\cB'|\geq \d|\cB|$, and suppose that $\cA\s \cG$ is $(\cB',1-\e)$-closed, where $\e$ is to be specified. Let $\cB''=\{b\in \cB': \cA \text{ is } (\{b\},1-2\e) \text{-closed}\}$. Clearly, $|\cB''|\geq \frac{1}{2}|\cB'|$. Using Lemma \ref{mainlemma}, we can find a multiset $Q$ with elements chosen from $g_1(d)\cB''$ such that the set of arrays $r\in \cG$ with $r.q=0$ for at least $(1-g_2(d))|Q|$ choices $q\in Q$ (where $g_2(d)>0$) is contained in $\sum_{I\subset \lbrack d\rbrack, I\neq \emptyset} V_I\otimes \F_2^{I^c}$ for subspaces $V_I\s \F_2^{I}$ of dimension at most $g_3(d,\d)$. Then, by Lemma \ref{triangle}, $\cA$ is $(\{b\},1-2g_1(d)\e)$-closed for every $b\in Q$, since each such $b$ is an element of $g_1(d)\cB''$. In particular, $\cA$ is $(Q,1-2g_1(d)\e)$-closed, and therefore 
	\[\sum_r|\hcA(r)|^2\widehat\mu_Q(r)\geq(1-2g_1(d)\e)\sum_r|\hcA(r)|^2.\]
By Markov's inequality, it follows that 
	\[\sum_{r:\widehat\mu_Q(r)<1-2g_2(d)}|\hcA(r)|^2\leq\frac{g_1(d)\e}{g_2(d)}\sum_r|\hcA(r)|^2.\]
Choosing $\e=\e(d,\theta)>0$ to be at most $\theta g_2(d)/g_1(d)$, we therefore have 
\[\sum_{r:\widehat\mu_Q(r)\geq 1-2g_2(d)}|\hcA(r)|^2\geq(1-\theta)\sum_r|\hcA(r)|^2.\]
	Now if $\widehat\mu_Q(r)\geq 1-2g_2(d)$ then $r.q=0$ for at least $(1-g_2(d))|Q|$ choices $q\in Q$. Thus, we have 
	\begin{equation}
		\sum_{r\in T}|\hcA(r)|^2\geq(1-\theta)\sum_{r\in \cG}|\hcA(r)|^2 \label{eq1}
	\end{equation} where $T=\sum_{I\subset \lbrack d\rbrack, I\neq \emptyset} V_I\otimes \F_2^{I^c}$. Let $R=T^{\perp}=\bigcap_{I\subset \lbrack d\rbrack, I\neq \emptyset} V_I^{\perp}\otimes \F_2^{I^c}$. Because $\widehat{\mu_R}$ is the characteristic function of $T$, (\ref{eq1}) is equivalent to the inequality
	\[\sum_{r\in \cG}|\hcA(r)|^2\widehat{\mu_R}(r)\geq (1-\theta)\sum_{r\in \cG}|\hcA(r)|^2,\]
	which in physical space is the inequality 
	\[\langle\b1_{\cA}*\b1_{\cA},\mu_R\rangle\geq (1-\theta)\|\b1_{\cA}\|_2^2=(1-\theta)\a,\]
	where $\a$ is the density of $\cA$. Equivalently,
	\[\langle\mu_{\cA}*\mu_R,\b1_{\cA}\rangle\geq 1-\theta,\]
	which tells us that if a random element of $\cA$ is added to a random element of $R$, then the sum belongs to $\cA$ with probability at least $1-\theta$. The number of triples $(a_1,a_2,r)\in \cA\times \cA\times R$ with $a_1+a_2=r$ is therefore at least $(1-\theta)|\cA||R|$, and therefore, by averaging, there exists $a\in \cA$ such that $|(R-a)\cap \cA|\geq (1-\theta)|R|=(1-\theta)|R-a|$. But $R-a$ is $g_3(d,\d)$-simple, so we can take $k=g_3(d,\d)$ and $\cD=R-a$.
\end{proof}

It remains to prove Lemma \ref{mainlemma}.

\subsection{The proof of Lemma \ref{mainlemma} in the matrix case}

For the reader's convenience, in this subsection we give the proof of Lemma \ref{mainlemma} in the matrix case: that is, the case when $d=2$. Accordingly, in this subsection, $\cG$ will be the group $\F_2^{n_1}\otimes \F_2^{n_2}$ and $\cB$ will be the multiset that consists of all rank-1 matrices $u_1\otimes u_2$ with $u_1\in \F_2^{n_1}$ and $u_2\in \F_2^{n_2}$, with multiplicity. (As already remarked, the non-trivial multiplicity comes from when $u_1$ or $u_2$ is zero.)

\begin{lemma} \label{findstructure}
	Let $\cB'\subset \cB$ be such that $|\cB'|\geq \delta |\cB|$. Then there exist $k$ depending only on $\delta$, a subspace $U\subset \F_2^{n_1}$, and a subspace $V_u\subset\F_2^{n_2}$ for each $u\in U$, such that all these subspaces have codimension at most $k$, and such that every $u\otimes v$ with $u\in U,v\in V_u$ belongs to $16\cB'$. Moreover, we can insist that all $V_u$ have the same codimension.
\end{lemma}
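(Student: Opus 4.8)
The plan is to separate the two tensor directions and apply Bogolyubov's lemma twice. Identify $\cB$ with the set of pairs $\F_2^{n_1}\times\F_2^{n_2}$, so that $\cB'$ corresponds to a subset $S\subset\F_2^{n_1}\times\F_2^{n_2}$ with $|S|\geq\delta 2^{n_1+n_2}$, and for $u\in\F_2^{n_1}$ write $S_u=\{v\in\F_2^{n_2}:(u,v)\in S\}$. Two trivial observations drive the argument: if $s_1,\dots,s_j\in S_u$ then $u\otimes(s_1+\dots+s_j)=\sum_{i=1}^j u\otimes s_i$ is a sum of $j$ elements of $\cB'$, so $u\otimes v\in j\cB'$ whenever $v$ is a sum of $j$ elements of $S_u$; and tensors are additive in the first coordinate, $(u^1+\dots+u^4)\otimes v=\sum_{i=1}^4 u^i\otimes v$.

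First I would run an averaging step. Set $U_0=\{u\in\F_2^{n_1}:|S_u|\geq(\delta/2)2^{n_2}\}$. Since $\sum_u|S_u|=|S|\geq\delta 2^{n_1+n_2}$, a Markov-type estimate gives $|U_0|\geq(\delta/2)2^{n_1}$; thus $U_0$ has density at least $\delta/2$, and every $S_u$ with $u\in U_0$ has density at least $\delta/2$. Next I would apply Bogolyubov's lemma (see e.g. \cite{taovu}) twice, in both cases to sets of density at least $\delta/2$. Applied to $U_0$, it gives a subspace $U\subset U_0+U_0+U_0+U_0$ (the set of sums of exactly four elements, which over $\F_2$ is $2U_0-2U_0$) of codimension at most $m$, where $m=m(\delta)$ is the codimension bound in Bogolyubov's lemma for density $\delta/2$. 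Applied to each $S_u$ with $u\in U_0$, it gives a subspace $W_u\subset S_u+S_u+S_u+S_u$ of codimension at most $m$; this bound is uniform over $u\in U_0$ because all these slices have density at least $\delta/2$.

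Finally, for each $u\in U$ choose arbitrarily a representation $u=u^1(u)+u^2(u)+u^3(u)+u^4(u)$ with each $u^i(u)\in U_0$, which is possible because $U\subset U_0+U_0+U_0+U_0$, and put $V_u=\bigcap_{i=1}^4 W_{u^i(u)}$, a subspace of codimension at most $4m$. If $v\in V_u$ then for each $i$ we have $v\in W_{u^i(u)}\subset S_{u^i(u)}+S_{u^i(u)}+S_{u^i(u)}+S_{u^i(u)}$, so $u^i(u)\otimes v\in 4\cB'$ by the first observation, and hence $u\otimes v=\sum_{i=1}^4 u^i(u)\otimes v\in 16\cB'$ by the second. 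Thus $U$ and the $V_u$ have codimension at most $k:=4m=k(\delta)$ and satisfy the required tensor-membership property. To arrange that all $V_u$ have the same codimension, replace each $V_u$ by a subspace of codimension exactly $\min(k,n_2)$ (possible since $\codim(V_u)\le 4m\le k$); this only shrinks $V_u$ and so preserves the conclusion. The conceptual point that makes the argument work is that one must first pass to the dense set $U_0$ and obtain $U$ inside its \emph{fourfold} sumset, so that every $u\in U$ decomposes into four elements whose slices $S_{u^i}$ are all dense; one cannot work directly with all of $\F_2^{n_1}$, where most slices could be empty. Beyond that I do not expect a serious obstacle, the remaining points being the uniformity of the Bogolyubov bound just noted, and the degenerate cases $n_1<k$ or $n_2<k$, which are handled trivially by taking $U=\{0\}$ or all $V_u=\{0\}$.
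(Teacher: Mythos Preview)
Your proof is correct and follows essentially the same route as the paper: an averaging step to find the dense set of first coordinates, Bogolyubov's lemma applied to that set and to each dense slice, and then intersection of the four slice-subspaces coming from a representation $u=u^1+u^2+u^3+u^4$. The notation differs ($U_0$ for the paper's $T$, $S_u$ for $\cB_u'$), but the argument and the resulting bound $k=4m(\delta)$ are the same.
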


\begin{proof}
	For each $u\in \F_2^{n_1}$, let $\cB_u'=\{v\in \F_2^{n_2}: u\otimes v\in \cB'\}$ and let $T=\{u\in \F_2^{n_1}: |\cB_u'|\geq\frac{\delta}{2}2^{n_2}\}$. By averaging, we have that $|T|\geq\frac{\delta}{2}2^{n_1}$.
	
	If $u\in T$, then $\cB_u'$ has density at least $\frac{\delta}{2}$ in $\F_2^{n_2}$, so by Bogolyubov's lemma (see for example Proposition 4.39 of \cite{taovu}) $4\cB_u'=2\cB_u'-2\cB_u'$ contains a subspace $W_u$ of codimension at most $k_1=k_1(\d)$. By Bogolyubov's lemma again, there is a subspace $U$ of codimension at most $k_1$ contained in $4T$. Let $u\in U$. Then pick $t_1,t_2,t_3,t_4\in T$ arbitrarily such that $u=t_1+t_2+t_3+t_4$ and set $V_u=\bigcap_{i=1}^4 W_{t_i}$. Note that $V_u$ has codimension at most $4k_1$ for every $u\in U$. If $v\in V_u$, then $t_i\otimes v\in 4\cB'$ for $i=1,\dots,4$, therefore $u\otimes v\in 16\cB'$. Thus, we can take $k=4k_1$.
	
	For the last assertion, note that we may replace $V_u$ with any subspace of it, and we still have $u\otimes V_u\s 16\cB'$.
\end{proof}

We shall now prove that we may take $Q=\bigcup_{u\in U} (u\otimes V_u)$ in Lemma \ref{mainlemma}.

%\textbf{+++I have changed the word ``codimension" to ``dimension" several times between here and the end of the section. Unless I've misunderstood things very badly, this is the right thing to do, but I thought I'd flag it up just to be on the safe side.+++}

\begin{lemma} \label{Qworks}
	There exists an absolute constant $\e>0$ with the following property. Let $U$ be a subspace of $\F_2^{n_1}$ and for each $u\in U$ let $V_u$ be a subspace of $\F_2^{n_2}$ such that all these subspaces have codimension at most $k$ and all the $V_u$ have the same codimension. Let $Q$ be the multiset $\bigcup_{u\in U}(u\otimes V_u)$. Then the set of arrays $r\in \cG$ with $r.q=0$ for at least $(1-\e)|Q|$ choices $q\in Q$ is contained in $W_{12}+W_1\otimes \F_2^{n_2}+\F_2^{n_1}\otimes W_2$ where $W_{12}\s \F_2^{n_1}\otimes \F_2^{n_2},W_1\s \F_2^{n_1}$ and $W_2\s \F_2^{n_2}$ are subspaces of dimension at most $f(k)$.
\end{lemma}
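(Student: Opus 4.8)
The plan is to analyse the Fourier transform of $\mu_Q$ directly. Since $Q=\bigcup_{u\in U}(u\otimes V_u)$ (as a multiset indexed by $u\in U$ and $v\in V_u$), for $r\in\cG$ we can compute
\[
\widehat{\mu_Q}(r)=\E_{u\in U}\,\E_{v\in V_u}(-1)^{r.(u\otimes v)}.
\]
The key observation is that $r.(u\otimes v)=(ru).v$, where $ru\in\F_2^{n_2}$ denotes the contraction of the array $r$ with the vector $u$ in the first coordinate (i.e.\ $(ru)_j=\sum_i r_{ij}u_i$). Hence the inner average over $v\in V_u$ equals $\widehat{\mu_{V_u}}(ru)$, which is $1$ if $ru\in V_u^\perp$ and $0$ otherwise. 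Therefore $\widehat{\mu_Q}(r)=\P_{u\in U}[ru\in V_u^\perp]$. Writing $r$ as a linear map $\F_2^{n_1}\to\F_2^{n_2}$, $u\mapsto ru$, the condition that $r.q=0$ for at least $(1-\e)|Q|$ choices of $q$ is exactly the condition $\widehat{\mu_Q}(r)\geq 1-\e$, i.e.\ that $ru\in V_u^\perp$ for at least a $(1-\e)$-fraction of $u\in U$. Note each $V_u^\perp$ has dimension at most $k$ (the common codimension).

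Now I would argue that such an $r$ must be ``low rank up to bounded corrections.'' First, since $r$ is linear on $U$ and $\dim V_u^\perp\le k$, and $ru\in V_u^\perp$ for most $u$, one can show the image $\{ru:u\in U\}$ is contained in a bounded-dimensional subspace of $\F_2^{n_2}$ after deleting a bounded-codimension subspace of $U$. Concretely: let $U_0=\{u\in U: ru\in V_u^\perp\}$ — this is not a subspace, but it has density $\ge 1-\e$ in $U$, so by a Bogolyubov-type / averaging argument $U_0-U_0$ contains a subspace $U'$ of codimension $O_k(1)$ in $U$ (hence in $\F_2^{n_1}$). For $u\in U'$ write $u=u_1+u_2$ with $u_1,u_2\in U_0$; then $ru=ru_1+ru_2\in V_{u_1}^\perp+V_{u_2}^\perp$. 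Taking a basis $w_1,\dots,w_t$ of $U'$ with $t=\dim U'$ — this is not bounded — this alone does not bound the image. Instead I would use a second-moment / energy argument: consider $\E_{u,u'\in U_0}[ru=ru']$ or count collisions, to show that the image $r(U_0)$ is concentrated on a bounded-dimensional subspace $W_2^*$; more precisely, one shows there is a subspace $W_2\subset\F_2^{n_2}$ of dimension $O_k(1)$ such that $ru\in W_2$ for all $u$ in a codimension-$O_k(1)$ subspace $W_1^\perp$ of $\F_2^{n_1}$. The cleanest route: the sets $V_u^\perp$ for $u$ ranging over a bounded-codimension subspace can be shown, by iterating Bogolyubov and using that finitely many steps of addition stay inside a bounded-dimensional space, to have union spanning only a bounded-dimensional space on which $r$ restricted to $W_1^\perp$ lands.

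Once we know there are subspaces $W_1\subset\F_2^{n_1}$ and $W_2\subset\F_2^{n_2}$ of dimension $O_k(1)$ (so codimension large) such that $r$, viewed as a map $\F_2^{n_1}\to\F_2^{n_2}$, sends $W_1^\perp$ into $W_2$, the decomposition follows by linear algebra: any $r$ with $r(W_1^\perp)\subseteq W_2$ lies in $\F_2^{n_1}\otimes W_2 + W_1\otimes\F_2^{n_2}$ (split $\F_2^{n_1}=W_1\oplus W_1^\perp$ as a direct sum complement; the part of $r$ supported on $W_1^\perp$ maps into $W_2$, contributing to $\F_2^{n_1}\otimes W_2$, while the part supported on $W_1$ lies in $W_1\otimes\F_2^{n_2}$). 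Absorbing any fixed bounded-dimensional slack into the term $W_{12}$, we get containment in $W_{12}+W_1\otimes\F_2^{n_2}+\F_2^{n_1}\otimes W_2$ with all three of dimension at most some $f(k)$, as required. I expect the \textbf{main obstacle} to be the middle step: controlling the image $r(U_0)$ when $U_0$ is only dense rather than a subspace and the ambient dimensions $n_1,n_2$ are unbounded — one must extract a genuine bounded-dimensional target subspace $W_2$ and a bounded-codimension domain subspace $W_1^\perp$ simultaneously, and getting the quantitative dependence $f(k)$ right (and $\e$ absolute) will require care with the Bogolyubov iterations and a collision/energy count rather than a naive union bound.
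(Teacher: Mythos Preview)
Your opening analysis is correct and matches the paper: the condition ``$r.q=0$ for at least $(1-\e)|Q|$ of $q\in Q$'' is exactly $\widehat{\mu_Q}(r)\ge 1-\e$, which unwinds to $ru\in V_u^\perp$ for at least a $(1-\e)$-fraction of $u\in U$. The gap is in your middle step. You try to show, for a single $r$ satisfying the hypothesis, that $r$ maps some bounded-codimension subspace $W_1^\perp$ into a bounded-dimensional subspace $W_2$; equivalently, that $r$ has bounded rank modulo $W_1\otimes\F_2^{n_2}$. This is false. Take $U=\F_2^{n_1}$, let $r$ be any invertible matrix, and set $V_u^\perp=\{0,ru\}$ for $u\ne 0$ (and $V_0^\perp$ any one-dimensional space). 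All $V_u$ have codimension $1$, so $k=1$, yet $ru\in V_u^\perp$ for every $u$ and the image $r(U)$ is all of $\F_2^{n_2}$. Thus no bounded-dimensional $W_2$ can absorb $r(W_1^\perp)$, and no ``bounded-dimensional slack'' can be pushed into $W_{12}$, since $r$ itself has full rank.

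The paper's proof avoids this by \emph{first} comparing different $r$'s rather than analysing a single one. A pigeonhole argument (Lemma~\ref{smallrank}) shows that among any $2^{k+3}$ arrays each satisfying the hypothesis, two must differ by a matrix of bounded rank: for most $u$, each $r_iu$ lands in the $2^k$-element set $V_u^\perp$, so two of the $r_i$ collide on a positive fraction of $u$'s, forcing $r_i-r_j$ to have small rank. This yields a bounded list $x_1,\dots,x_m$ of representatives such that every valid $r$ is within bounded rank of some $x_i$; their span is the $W_{12}$. Only then does one handle the residual low-rank part (Lemma~\ref{reduced}), where a counting argument --- defining $W_2$ as the span of those $x\in\F_2^{n_2}$ that lie in $V_u^\perp$ for a noticeable fraction of $u$ --- works precisely because the rank bound confines the image $ru$ to a bounded-dimensional space $H_2(r)$, making the union bound effective. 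Your collision/energy intuition is the right tool, but it must be applied \emph{across} different $r$'s to produce $W_{12}$, not within a single $r$ to bound its image.
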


Before we prove this lemma, we need to establish the following result. In the statement, and in the rest of this section, we use the following convention for the multiplication of arrays $r\in \F_2^{n_1}\otimes \dots \otimes \F_2^{n_{a+b}}$ and $s\in \F_2^{n_1}\otimes \dots \otimes \F_2^{n_{a}}$. Define $rs\in \F_2^{n_{a+1}}\otimes \dots \otimes \F_2^{n_{a+b}}$ to be the array with $(rs)_{i_{a+1},\dots,i_{a+b}}=\sum_{j_1,\dots,j_a}r_{j_1,\dots,j_a,i_{a+1},\dots,i_{a+b}}s_{j_1,\dots,j_a}$. Note that in the case when $r$ is a matrix and $s$ is a vector, this is not quite the same as the standard convention since we sum over the first coordinate of the matrix instead of the second. If $r$ and $s$ are arrays of the same size, then we use the notation $r.s$ for the product of $r$ and $s$, since then it coincides with the obvious notion of dot product, and it is useful to think of it that way. %\textbf{***Is that last remark correct?***}

\begin{lemma} \label{smallrank}
	Let $U$ be a subspace of $\F_2^{n_1}$ and for each $u\in U$ let $V_u$ be a subspace of $\F_2^{n_2}$ such that all these subspaces have codimension at most $k$ and all the $V_u$ have the same codimension. Let $Q=\bigcup_{u\in U}(u\otimes V_u)$ and for $m=2^{k+3}$, let $r_1,\dots,r_m\in \cG$ be such that for every $i\leq m$ there are %\textbf{[+++I seem to need strict inequality here, but perhaps I'm missing something+++]}
	at least $\frac 34|Q|$ choices of $q\in Q$ such that $r_i.q=0$. Then there exist $i\neq j$ such that $(r_i-r_j)w=0$ for at least $\rho(k)2^{n_1}$ choices $w\in \F_2^{n_1}$, for some positive function $\rho(k)$. Thus, there exist $i\neq j$ such that $r_i-r_j$ has rank at most $l(k)$.
\end{lemma}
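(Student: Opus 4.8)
\textbf{Proof proposal for Lemma~\ref{smallrank}.}

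The plan is to turn the hypothesis into a purely linear-algebraic condition, and then run two nested pigeonhole arguments: one inside each ``fibre'' over $\F_2^{n_1}$, and one over pairs of indices. The key preliminary observation is that evaluating an array against a rank-one tensor factorises: with the multiplication convention of the lemma, $r.(u\otimes v) = (ru).v$ for $r\in\cG$, $u\in\F_2^{n_1}$, $v\in\F_2^{n_2}$, where $ru\in\F_2^{n_2}$. So for fixed $i$ and fixed $u\in U$, the map $v\mapsto (r_iu).v$ is a linear functional on $V_u$, hence either identically zero --- exactly when $r_iu\in V_u^\perp$ --- or zero on precisely half of $V_u$. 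Writing $U_i=\{u\in U: r_iu\in V_u^\perp\}$ and using that all the $V_u$ have the same size (this is the only place the equal-codimension hypothesis is needed, and only for convenience of counting), the number of $q\in Q$ with $r_i.q=0$ equals $|V_u|\cdot\bigl(|U|+|U_i|\bigr)/2$. Since this is at least $\tfrac34|Q|=\tfrac34|U||V_u|$, we get $|U_i|\ge\tfrac12|U|$ for every $i\le m$.

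Next I would find many $u$ lying in many of the $U_i$. Summing over $i$ gives $\sum_{i=1}^m|U_i|\ge\tfrac{m}{2}|U|$, so by a routine averaging argument the set $U'$ of those $u\in U$ that lie in at least $m/4$ of the sets $U_i$ satisfies $|U'|\ge\tfrac13|U|$. Now fix $u\in U'$ and let $I(u)=\{i: u\in U_i\}$, so $|I(u)|\ge m/4=2^{k+1}$. For every $i\in I(u)$ the vector $r_iu$ lies in $V_u^\perp$, which has size $2^{\codim V_u}\le 2^k<2^{k+1}$; hence the assignment $i\mapsto r_iu$ cannot be injective on $I(u)$, and there exist $i\ne j$ in $I(u)$ with $r_iu=r_ju$, i.e.\ $(r_i-r_j)u=0$. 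Record one such pair for each $u\in U'$.

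Finally, there are $\binom m2<m^2/2=2^{2k+5}$ possible pairs, so some fixed pair $(i,j)$ is recorded for at least $|U'|/\binom m2\ge |U|/(3\cdot 2^{2k+5})\ge 2^{n_1}/(3\cdot 2^{3k+5})\ge\rho(k)2^{n_1}$ values of $u$, where one may take $\rho(k)=2^{-3k-7}$ after substituting $|U|\ge 2^{n_1-k}$ and $m=2^{k+3}$. For all these $u$ we have $(r_i-r_j)u=0$. To finish, note that $\{w\in\F_2^{n_1}:(r_i-r_j)w=0\}$ is a subspace of $\F_2^{n_1}$ (the left kernel of $r_i-r_j$) of size at least $\rho(k)2^{n_1}$, so its codimension is at most $\log_2\rho(k)^{-1}=3k+7$; equivalently $r_i-r_j$ has rank at most $l(k):=3k+7$.

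\textbf{Main obstacle.} The only genuine subtlety is that $V_u$ varies with $u$, so $U_i$ is not a subspace and one cannot intersect two of the $U_i$ and argue as one could if all $V_u$ were equal. The fix is to perform the collision argument fibre-by-fibre over $\F_2^{n_1}$ (the third paragraph above), where only the crude bound $|V_u^\perp|\le 2^k$ for an individual $u$ is used; this is precisely why the statement needs $m=2^{k+3}$ (so that $m/4>2^k$) rather than a small absolute constant, and why the conclusion is a statement about \emph{some} pair $r_i-r_j$ rather than about all differences.
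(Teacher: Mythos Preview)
Your proof is correct and follows essentially the same route as the paper's: factorise $r.(u\otimes v)=(ru).v$ to get $|U_i|\ge|U|/2$, average to find many $u$ lying in at least $m/4$ of the $U_i$, force a collision in $V_u^\perp$ since $m/4>2^k$, and pigeonhole over the $\binom m2$ pairs. You give slightly sharper constants and spell out the final rank bound explicitly, but the argument is the same.
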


\begin{proof}[Proof of Lemma \ref{smallrank}]
	For each $i$, there are at least $|U|/2$ choices of $u\in U$ such that $r_i.(u\otimes v)=0$ for more than $|V_u|/2$ choices of $v\in V_u$. But $r_i.(u\otimes v)=(r_iu).v$, so this implies that $r_iu\in V_u^\perp$. 
	
	It follows that for at least $|U|/4$ choices of $u\in U$ we have $r_iu\in V_u^{\perp}$ for at least $m/4$ choices of $i\leq m$. Since $m/4>|V_u^{\perp}|$, for every such $u$ there exist $i\neq j$ with $r_iu=r_ju$. Thus, for some $i\neq j$, there are at least $\frac{|U|}{4m^2}$ choices of $u\in U$ with $r_iu=r_ju$. The result follows.
\end{proof}

Lemma \ref{smallrank} reduces Lemma \ref{Qworks} to the following.

\begin{lemma} \label{reduced}
	There exists an absolute constant $\e'>0$ with the following property. Let $U$ be a subspace of $\F_2^{n_1}$ and for each $u\in U$ let $V_u$ be a subspace of $\F_2^{n_2}$ such that all these subspaces have codimension at most $k$ and all the $V_u$ have the same codimension. Let $Q$ be the multiset $\bigcup_{u\in U}(u\otimes V_u)$. Then the set of arrays $r\in \cG$ of rank at most $l$ such that $r.q=0$ for at least $(1-\e')|Q|$ choices $q\in Q$ is contained in $W_1\otimes \F_2^{n_2}+\F_2^{n_1}\otimes W_2$ where $W_1\s \F_2^{n_1}$ and $W_2\s \F_2^{n_2}$ are subspaces of codimension at most $f(k,l)$.
\end{lemma}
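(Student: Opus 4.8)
\textbf{Proof plan for Lemma \ref{reduced}.}

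The plan is to show that \emph{every} rank-$\le l$ array $r$ that is orthogonal to $(1-\e')$ of $Q$ lies in a single fixed subspace $W_1\otimes\F_2^{n_2}+\F_2^{n_1}\otimes W_2$ whose factors have codimension bounded in terms of $k$ and $l$ only. The first step is to analyse one such $r$. Writing $r$ as a matrix of rank $\le l$, orthogonality of $r$ to $u\otimes v$ means $(ru).v=0$, so for a density-$1$ (say, $\ge 1/2$ after an averaging argument over $u\in U$, using $\e'$ small) set of $u\in U$ we get $ru\in V_u^\perp$, a subspace of dimension $\le k$. Thus the linear map $u\mapsto ru$, restricted to a large subset of $U$, takes values in the union $\bigcup_{u}V_u^\perp$; since $r$ has rank $\le l$, its image is at most $l$-dimensional, and I expect to conclude that on a further positive-density (in terms of $k,l$) subset $U_0\subset U$ the values $ru$ all lie in one \emph{fixed} $\le k$-dimensional space and in fact that $ru=0$ for $u$ in a codimension-$f(k,l)$ subspace $U_1\subset U$. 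This forces the row space of $r$ (as a map out of $\F_2^{n_1}$) to be ``supported'' on $U_1^\perp$, i.e. $r\in (U^\perp+U_1^\perp)\otimes\F_2^{n_2}$ where $U^\perp$ has codimension... no: the subtlety is that $U_1^\perp$ has bounded \emph{dimension}, not codimension, and it depends on $r$.

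To get a \emph{fixed} pair $(W_1,W_2)$ I would then run a compactness/pigeonhole step across all qualifying $r$. Here is the key observation: by Lemma \ref{smallrank} applied with the roles of the two factors symmetric, the set $S$ of qualifying rank-$\le l$ arrays has the property that among any $m=2^{k+3}$ of them, two differ by an array of rank $\le l(k)$ that moreover vanishes on $\rho(k)2^{n_1}$ vectors $w$. Iterating, and using that rank-$\le l$ arrays form a low-complexity family, I expect $S$ to be covered by boundedly many cosets of a single low-rank ``model'' — more precisely, I would show $S\subseteq \{r_0\}+S'$ where $r_0$ is one fixed element and $S'$ is again a set of low-rank arrays each annihilated by a positive-density set of $w$, hence (by the one-array analysis above, now with uniform parameters) contained in a single $W_1'\otimes\F_2^{n_2}+\F_2^{n_1}\otimes W_2'$ with $W_1',W_2'$ of bounded codimension. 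Absorbing $r_0$ (which itself has rank $\le l$, hence lies in such a product) into $W_1,W_2$ completes the step, at the cost of increasing $f(k,l)$ by $O(l)$.

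The cleanest way to organise all of this, and the one I would actually write up, is to avoid the iteration and instead argue directly about $\bigcap_r$-type objects. Consider the subspace $W_2:=\sum_{u\in U}V_u^\perp$ pushed forward appropriately: for \emph{any} qualifying $r$ and \emph{any} $u\in U$ with $ru\in V_u^\perp$ (a density-$1$ event), the vector $ru$ lies in $\sum_u V_u^\perp$, which has dimension $\le |U^\perp|\cdot\text{something}$ — this is too big, so instead I fix a basis $u_1,\dots,u_t$ of a codimension-$f_1(k,l)$ subspace $U_1$ of $U$ on which \emph{every} qualifying $r$ satisfies $ru_i=0$; such $U_1$ exists because the bad set of $u$ for a given $r$ has bounded size and, crucially, $r$ ranges over rank-$\le l$ arrays so the relevant family of ``obstruction subspaces'' $\{u: ru\ne 0\}$ has bounded complexity and one can take a common refinement (this is where I will need an explicit counting bound: the number of subspaces arising as row-kernels of rank-$\le l$ maps, intersected with $U$, that can occur, is bounded). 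Then $rU_1=0$ for all qualifying $r$, so every qualifying $r$ factors through $\F_2^{n_1}/U_1\cong$ a space of dimension $\le\dim U_1^\perp$; dually there is a bounded-codimension $W_2\subset\F_2^{n_2}$ playing the symmetric role, and one checks $r\in U_1^{\perp}\otimes\F_2^{n_2}+\F_2^{n_1}\otimes W_2$ — but $U_1^\perp$ has bounded \emph{dimension}, which is even better than bounded codimension for the $W_1$ slot after intersecting with the large subspace forced by the symmetric argument, so we set $W_1$ to be a bounded-codimension subspace containing $U_1^\perp$ appropriately. I expect the main obstacle to be exactly this uniformity bookkeeping: turning the per-$r$, bounded-\emph{dimension} data of the first step into fixed bounded-\emph{codimension} subspaces $W_1,W_2$ valid for all $r$ at once, which requires either a careful pigeonhole on the (boundedly many) possible low-rank ``shapes'' of $r$ or an application of Lemma \ref{smallrank} in both coordinates to collapse the family, and getting the quantifiers in the right order so that $\e'$ is an absolute constant while $f(k,l)$ is allowed to grow.
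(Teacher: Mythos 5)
Your proposal has a genuine gap, and it is in the step you yourself flag as the crux: the uniformization over $r$. In your ``cleanest'' version you want a single subspace $U_1\s U$ of bounded codimension on which \emph{every} qualifying $r$ satisfies $ru=0$, justified by the claim that the family of kernels $\{u\in U: ru=0\}$ arising from qualifying rank-$\le l$ arrays has bounded complexity. This is false. Take any $x$ lying in $V_u^\perp$ for every $u\in U$ (which happens, e.g., if all the $V_u$ are contained in a common hyperplane $x^\perp$); then for \emph{every} $w\in\F_2^{n_1}$ the rank-one array $r=w\otimes x$ satisfies $r.(u\otimes v)=(w.u)(x.v)=0$ for all $q=u\otimes v\in Q$, so it qualifies with $\e'$ arbitrary. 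The kernels $\{u\in U: w.u=0\}$ range over essentially all codimension-$\le 1$ subspaces of $U$ as $w$ varies, and their common intersection is trivial, so no bounded-codimension $U_1$ exists. This example also shows why the target containment has the shape it does: such $r$ are caught by the term $\F_2^{n_1}\otimes W_2$ (their second tensor factor lies in a small fixed space), not by constraining the first factor, which is what your argument tries to do. Your alternative route via iterating Lemma~\ref{smallrank} inside this proof is too vague to assess (``covered by boundedly many cosets of a single low-rank model'' is not justified) and in any case faces the same obstruction.

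The missing idea is how to define a \emph{fixed} $W_2$ of bounded size. The paper sets $X=\{x\in\F_2^{n_2}: x\in V_u^\perp \text{ for at least } |U|/(10\cdot 2^l) \text{ choices of } u\in U\}$; since each $V_u^\perp$ has at most $2^k$ elements, double counting gives $|X|\le 10\cdot 2^{k+l}$, and one takes $W_2=\mathrm{span}(X)$ (note this, and $W_1=U^\perp$, have bounded \emph{dimension}; the word ``codimension'' in the statement is a slip, as the conclusion must feed into Lemma~\ref{Qworks}, which needs dimension at most $f(k)$). You correctly get $ru\in V_u^\perp$ for $\ge 3|U|/4$ of the $u$, and correctly observe that all values $ru$ lie in the $\le l$-dimensional image $H_2(r)$ of $r$. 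The point you did not exploit is the union bound these two facts enable: each of the at most $2^l$ elements of $H_2(r)\setminus W_2$ is, by definition of $X$, in $V_u^\perp$ for fewer than $|U|/(10\cdot 2^l)$ values of $u$, so $ru\in V_u^\perp\setminus W_2$ for at most $|U|/10$ values of $u$. Hence $ru\in W_2$ for more than $|U|/2$ values of $u$, and since $\{u\in U: ru\in W_2\}$ is a subspace, for all $u\in U$; this gives $r\in U^\perp\otimes\F_2^{n_2}+\F_2^{n_1}\otimes W_2$ directly, with $\e'=1/8$ and no pigeonhole over the family of qualifying $r$. You circle near this when you write down $\sum_u V_u^\perp$ and reject it as too big; the repair is to keep only the \emph{popular} elements of $\bigcup_u V_u^\perp$, which is exactly where the bound $f(k,l)$ comes from.
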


Indeed, take $\e'$ as given by Lemma \ref{reduced} and let $\e=\min(\e'/2,1/4)$. We claim that $\e$ is suitable for Lemma \ref{Qworks}. By Lemma \ref{smallrank}, we can find $x_1,\dots,x_m\in \cG$ , with $m\leq 2^{k+3}$, as follows. For every $i$, $x_i.q=0$ for at least $(1-\e)|Q|$ choices $q\in Q$, and if $r.q=0$ for at least $(1-\e)|Q|$ choices $q\in Q$, then $r-x_i$ has rank at most $g_1(k)$ for some $i\leq m$. But then $(r-x_i).q=0$ for at least $(1-\e')|Q|$ choices $q\in Q$, and by Lemma \ref{reduced}, $r-x_i$ is contained in $W_1\otimes \F_2^{n_2}+\F_2^{n_1}\otimes W_2$ where $W_1,W_2$ have codimension at most $g_2(k)$ and do not depend on $r$. So we may take $W_{12}$ to be the span of all the $x_i$.

\begin{proof}[Proof of Lemma \ref{reduced}]
	We shall prove that we can take $\e'$ to be $1/8$, $W_1$ to be $U^{\perp}$ and $W_2$ to be a subspace that we define as follows. Let $X=\{x\in \F_2^{n_2}: x\in V_u^{\perp} \text{ for at least } \frac{|U|}{10\cdot 2^{l}} \text{ choices } u\in U\}$. Since $|V_u^{\perp}|\leq 2^k$ for every $u\in U$, we have $|X|\leq \frac{|U|\cdot 2^k}{{|U|}/{10\cdot 2^l}}=10\cdot 2^{k+l}$. Let $W_2=\text{span}(X)$.
	
	Now let $r$ be a matrix of rank at most $l$ such that $r.q=0$ for at least $7|Q|/8$ choices of $q\in Q$. Then $ru\in V_u^{\perp}$ for at least $3|U|/4$ choices of $u\in U$. Since $r$ has rank at most $l$, we have $r\in \mathbb{F}_2^{n_1}\otimes H_2(r)$ for some subspace $H_2(r)\s \F_2^{n_2}$ with $\dim(H_2(r))\leq l$. By definition, for any $v\not \in W_2$, the number of $u\in U$ with $v\in V_u^{\perp}$ is at most $\frac{|U|}{10\cdot 2^l}$. Since $|H_2(r)|\leq 2^l$ and $ru\in H_2(r)$ for every $u\in U$, the number of $u\in U$ with $ru\in V_u^{\perp}\setminus W_2$ is at most $\frac{|U|}{10}$. It follows that for more than $|U|/2$ choices $u\in U$ we have $ru\in W_2$. This in fact implies that $ru\in W_2$ for all $u\in U$, which implies that $r\in U^{\perp}\otimes \F_2^{n_2}+\F_2^{n_1}\otimes W_2$.
\end{proof}

\subsection{The proof of Lemma \ref{mainlemma} in the general case}

It is convenient to introduce a few definitions.

\begin{definition}
	Let $k$ be a positive integer and let $\e>0$. Let $Q$ be a multiset with elements chosen from $\cG$ (with arbitrary multiplicity). We say that $Q$ is $(k,\a)$-\emph{forcing} if the set of all arrays $r\in \cG$ with $r.q=0$ for at least $\a|Q|$ choices $q\in Q$ is contained in a set of the from $\sum_{I\subset \lbrack d\rbrack,I\neq \emptyset} V_I\otimes \F_2^{I^c}$ for some choice of subspaces $V_I\s \F_2^{I}$ of dimension at most $k$. %\textbf{***NB I have changed this definition: you originally wrote $(k,\e)$-forcing and put $1-\e$ instead of $\a$ in the condition, but I don't think that's the definition you actually use.}
	\end{definition}
Note that the conclusion of Lemma \ref{mainlemma} is that the multiset $Q$ is $(f_2(d,\d),1-f_3(d))$-forcing.
	
\begin{definition}
	We say that the hypothesis (H) holds for $d$ if the following is true. For every $\d>0$ there exist $f_1(d),f_2(d,\d)$ and $f_3(d)>0$ such that if $\cB'\subset \cB$ is such that $|\cB'|\geq \d|\cB|$, then there exists a multiset $Q$, with elements chosen from $f_1(d)\cB'$, that is $(f_2(d,\d),1-f_3(d))$-forcing.
\end{definition}

Hypothesis (H) is the one that really interests us, since if it holds for every $d$ then Lemma \ref{mainlemma} is proved. However, in order to get an induction to work we shall need a slight strengthening that says that we can ask for the elements of $Q$ to belong to a large subspace of a suitable form.

\begin{definition}	
	We say that the hypothesis (H') holds for $d$ if the following is true. For every $\d>0$ and every positive integer $l$ there exist $f_1(d),f_2(d,\d,l)$ and $f_3(d)>0$ such that if $\cB'\subset \cB$ is such that $|\cB'|\geq \d|\cB|$, and for every non-empty $I\subset \lbrack d\rbrack$, $L_I\s \F_2^I$ is a subspace of codimension at most $l$, then there exists a multiset $Q$, with elements chosen from $(f_1(d)\cB')\cap \bigcap_{I} (L_I\otimes \F_2^{I^c})$, that is $(f_2(d,\d,l),1-f_3(d))$-forcing.
\end{definition}

In what follows, we shall prove that if (H) holds for $d$, then so does (H'), and that if (H') holds for all $d'<d$, then (H) holds for $d$. This completes the proof since (H) holds for $d=1$. Indeed, if $\cB'\s \F_2^{n_1}$ has $|\cB'|\geq \d\cdot2^{n_1}$, then by Bogolyubov's lemma, $4\cB'=2\cB'-2\cB'$ contains a subspace $U\s \F_2^{n_1}$ of codimension at most $g(\d)$. If $x.u=0$ for over half the elements of $U$, then the set of $u\in U$ with $x.u=0$ is not contained in a proper subspace, so $x\in U^\perp$, which has dimension at most $g(\d)$. This implies that $U$ is $(g(\d),3/4)$-forcing. %\textbf{***Although those last steps are easy, I decided to put them in in order to help the reader get comfortable with the notion of forcing.***}

\medskip

The next few results are needed for technical reasons. The set introduced in the next definition behaves well under certain algebraic operations, such as intersecting with a dense subspace. It is a generalization of the set $\bigcup_{u\in U}(u\otimes V_u)$ used in the previous subsection.

%\textbf{***I hope this is OK with you but I found the numerical indices in subspaces such as $U_{2,u_1}$ slightly confusing, and since they are redundant I have removed them. This is also more consistent with the notation in the matrix case. It is easy to keep in mind that $U_{u_1,\dots,u_{i-1}}$ is a subspace of $\F_2^{n_i}$, so I don't think this will be a problem for the reader.***}

\begin{definition} \label{ksystem}
Suppose that we build a collection of subspaces as follows. We begin with a subspace $U\subset\F_2^{n_1}$. Then for each $u_1\in U$ we take a subspace $U_{u_1}\subset\F_2^{n_2}$, for each $u_1\in U$ and $u_2\in U_{u_1}$ we take a subspace $U_{u_1,u_2}\subset\F_2^{n_2}$, and so on up to subspaces $U_{u_1,\dots,u_{d-1}}$. Now let $Q$ be the multiset that consists of all tensors $u_1\otimes\dots\otimes u_d$ such that $u_1\in U, u_2\in U_{u_1},\dots,u_d\in U_{u_1,\dots,u_{d-1}}$. If all the subspaces in the collection have codimension at most $l$, then we say that $Q$ is an \emph{$l$-system}.
\end{definition}

\begin{lemma} \label{intersect}
	Let $Q$ be an $l$-system and let $Q'$ be a $l'$-system. Then $Q\cap Q'$ contains an $(l+l')$-system.
\end{lemma}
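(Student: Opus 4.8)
I want to show that if $Q$ is built from data $U,\{U_{u_1}\},\dots,\{U_{u_1,\dots,u_{d-1}}\}$ (an $l$-system) and $Q'$ is built from data $U',\{U'_{u_1}\},\dots$ (an $l'$-system), then $Q\cap Q'$ contains an $(l+l')$-system. The natural candidate is to build a new system layer by layer by intersecting the corresponding subspaces: take $\widetilde U=U\cap U'$ at the first level, and then at level $i$, given a choice $u_1\in\widetilde U,\dots,u_{i-1}\in\widetilde U_{u_1,\dots,u_{i-2}}$ already made, set $\widetilde U_{u_1,\dots,u_{i-1}}=U_{u_1,\dots,u_{i-1}}\cap U'_{u_1,\dots,u_{i-1}}$. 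The point is that a tensor $u_1\otimes\dots\otimes u_d$ produced by the new system satisfies $u_1\in U$ and $u_1\in U'$, then $u_2\in U_{u_1}$ and $u_2\in U'_{u_1}$, and so on, so it lies in $Q$ and in $Q'$; conversely each $\widetilde U_{u_1,\dots,u_{i-1}}$, being the intersection of a codimension-$\le l$ subspace with a codimension-$\le l'$ subspace, has codimension at most $l+l'$, so the new system is an $(l+l')$-system.

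**The one subtlety.** The definition of an $l$-system requires that at every level a subspace is specified for \emph{every} vector in the preceding layer (that is, $U_{u_1}$ must be defined for every $u_1\in U$, not just for those in $U\cap U'$). So I first extend the new system's data to all of the relevant layers: if at some level $u_{i-1}$ lies in $\widetilde U_{u_1,\dots,u_{i-2}}$ but I am only trying to define things along the intersected layers anyway, this is automatic; but to match the definition verbatim I should note that the system $\widetilde Q$ is determined by its behaviour on the nested chain $\widetilde U\supseteq\dots$, and for indices outside those chains I may set the subspaces arbitrarily (say equal to $\F_2^{n_i}$, or to $U_{u_1,\dots,u_{i-1}}\cap U'_{u_1,\dots,u_{i-1}}$ when both are defined, or to $\{0\}$ otherwise) — these choices do not affect $\widetilde Q$, since the multiset $\widetilde Q$ only sees tensors whose initial segments lie in the intersected layers. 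With that bookkeeping dispatched, the containment $\widetilde Q\subseteq Q\cap Q'$ is immediate from unwinding Definition~\ref{ksystem}, and the codimension bound is immediate from $\codim(W\cap W')\le\codim(W)+\codim(W')$.

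**Where the work is.** There is essentially no hard step here: the lemma is a structural/bookkeeping statement, and the main thing to be careful about is precisely the indexing issue above — namely that in an $l$-system the subspace at level $i$ is indexed by the \emph{actual vectors} $u_1,\dots,u_{i-1}$ chosen at earlier levels (so that $U_{u_1}$ and $U'_{u_1}$ refer to the same $u_1$ and can be intersected), and that the definition demands these be defined on the full preceding layer. I expect the write-up to be three or four sentences: define $\widetilde U=U\cap U'$ and $\widetilde U_{u_1,\dots,u_{i-1}}=U_{u_1,\dots,u_{i-1}}\cap U'_{u_1,\dots,u_{i-1}}$ (for $u_j$ in the already-intersected layers), observe these have codimension $\le l+l'$, and check that the resulting multiset $\widetilde Q$ is contained in both $Q$ and $Q'$.
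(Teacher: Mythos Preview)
Your proposal is correct and follows exactly the paper's approach: define $\widetilde U=U\cap U'$ and inductively $\widetilde U_{u_1,\dots,u_{i-1}}=U_{u_1,\dots,u_{i-1}}\cap U'_{u_1,\dots,u_{i-1}}$, note the codimension bound, and conclude. The paper's proof is essentially your final paragraph; your discussion of the indexing subtlety is more careful than the paper, which simply asserts the construction is well-defined.
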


\begin{proof}
	Let $Q$ have spaces as in Definition \ref{ksystem} and let $Q'$ have spaces $U'_{u'_1,\dots,u'_{k-1}}$.
	We define an $(l+l')$-system $P$ contained in $Q\cap Q'$ as follows. Let $V=U\cap U'$. Suppose we have defined $V_{v_1,\dots,v_{j-1}}$ for all $j\leq k$. Let $v_1\in V,v_2\in V_{v_1},\dots,v_{k-1}\in V_{v_1,\dots,v_{k-2}}$.  We let $V_{v_1\dots,v_{k-1}}=U_{v_1\dots,v_{k-1}}\cap U'_{v_1\dots,v_{k-1}}$. This is well-defined and has codimension at most $l+l'$ in $\F_2^{n_k}$. Let $P$ be the corresponding $(l+l')$-system.
\end{proof}

\begin{lemma} \label{findsystem}
	Let $\cB'\subset \cB$ be a multiset such that $|\cB'|\geq \delta |\cB|$. Then there exists an $f_1(d,\d)$-system with elements chosen from $f_2(d)\cB'$.
\end{lemma}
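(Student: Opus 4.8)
The plan is to generalize the construction of Lemma~\ref{findstructure} from the matrix case to arbitrary $d$, producing the nested collection of subspaces required by Definition~\ref{ksystem}. The key tool throughout is Bogolyubov's lemma, which converts density into subspace structure, combined with a repeated averaging argument over the successive tensor coordinates.

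First I would proceed by induction on $d$ (or, equivalently, by iterating a coordinate-by-coordinate slicing procedure). Given $\cB'\subset\cB$ with $|\cB'|\geq\d|\cB|$, for each $u_1\in\F_2^{n_1}$ let $\cB'_{u_1}=\{(u_2,\dots,u_d): u_1\otimes u_2\otimes\dots\otimes u_d\in\cB'\}$, viewed as a subset of $\F_2^{n_2}\times\dots\times\F_2^{n_d}$ (or better, the corresponding subset of rank-1 tensors in $\F_2^{\{2,\dots,d\}}$, thought of inside $\cB$ for dimension $d-1$). By averaging, the set $T_1=\{u_1: |\cB'_{u_1}|\geq\tfrac{\d}{2}|\cB|_{d-1}\}$ has density at least $\d/2$ in $\F_2^{n_1}$, where $|\cB|_{d-1}$ denotes the size of the full rank-1 multiset in dimension $d-1$. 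By Bogolyubov's lemma, $4T_1=2T_1-2T_1$ contains a subspace $U\subset\F_2^{n_1}$ of codimension at most $k_1=k_1(\d)$. For each $u_1\in U$, write $u_1=t_1+t_2+t_3+t_4$ with $t_i\in T_1$; since the $\cB'_{t_i}$ are each dense, I would like to intersect the structures obtained from each of them and add up the four pieces of $u_1$, exactly as in Lemma~\ref{findstructure}. This is where the inductive hypothesis is applied: to each dense slice $\cB'_{t_i}$ (a dense subset of $\cB$ in dimension $d-1$) apply Lemma~\ref{findsystem} for $d-1$ to obtain an $f_1(d-1,\cdot)$-system inside $f_2(d-1)\cB'_{t_i}$; then by Lemma~\ref{intersect}, the intersection of the four systems (one for each $t_i$) contains a system whose parameters are bounded in terms of $d$ and $\d$, and whose elements, when tensored with $u_1$, lie in $4f_2(d-1)\cB'$.

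The remaining bookkeeping is to assemble these per-$u_1$ systems into a single $l$-system in the sense of Definition~\ref{ksystem}: the outer subspace is $U$, and for each $u_1\in U$ the nested data $U_{u_1},U_{u_1,u_2},\dots$ is exactly the system produced for the slice $\cB'_{u_1}$ (after intersecting the four pieces). One should check that all codimensions are bounded by a function $f_1(d,\d)$ — here $U$ contributes $k_1(\d)$, the intersection of four $(d-1)$-dimensional systems multiplies the codimension bound by $4$, and the multiplicative constant on $\cB'$ becomes $f_2(d)=4f_2(d-1)$ (or similar), giving an absolute constant depending only on $d$. The base case $d=1$ is immediate from Bogolyubov's lemma, which directly yields a codimension-$g(\d)$ subspace inside $4\cB'$.

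The main obstacle I anticipate is a subtlety already visible in Lemma~\ref{findstructure}: the inductively produced systems for different $t_i$ (and hence for different $u_1$) will have different codimensions at each level, and the ``Moreover, we can insist that all $V_u$ have the same codimension'' trick from Lemma~\ref{findstructure} — passing to a common subspace — must be carried out level by level and shown not to destroy the nested dependency structure. One must also ensure that Lemma~\ref{intersect}, which is stated for two systems, is applied to four (or iterated three times) without blowing up the codimension beyond a constant multiple. Neither of these is deep, but getting the quantifiers and the order of ``pass to a subspace / take intersections / average'' right is the technical heart of the argument; once that is handled, the claimed bounds $f_1(d,\d)$ and $f_2(d)$ follow by unwinding the recursion.
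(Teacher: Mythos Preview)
Your proposal is correct and follows essentially the same approach as the paper: induction on $d$, slice over the first coordinate, average to get a dense set $T$, apply Bogolyubov to obtain $U\subset 4T$, apply the inductive hypothesis to each slice $\cB'_{t}$ with $t\in T$, and for each $u\in U$ intersect the four resulting systems via Lemma~\ref{intersect}. Your anticipated obstacle about forcing all subspaces at a given level to have the same codimension is in fact a non-issue here: that uniformity is required in Lemma~\ref{findstructure} and Lemma~\ref{smallrank}, but it is not part of the statement of Lemma~\ref{findsystem}, and the paper's proof does not address it.
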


\begin{proof}
	The proof is by induction on $d$. The case $d=1$ is an easy application of Bogolyubov's lemma. Suppose that the lemma has been proved for all $d'<d$ and let $\cB'\subset \cB$ be a multiset such that $|\cB'|\geq \delta |\cB|$. Let $\cD$ be the multiset $\{v_2\otimes\dots\otimes v_d: v_2\in \F_2^{n_2},\dots,v_d\in \F_2^{n_d}\}$. For each $u\in \F_2^{n_1}$, let $\cB_u'=\{s\in \cD: u\otimes s\in \cB'\}$ and let $T=\{u\in \F_2^{n_1}: |\cB_u'|\geq\frac{\delta}{2}|\cD|\}$. By averaging, we have that $|T|\geq\frac{\delta}{2}2^{n_1}$. Now by the induction hypothesis, for every $t\in T$, there exists an $f_1(d-1,\d/2)$-system $P_t$ in $\F_2^{n_2}\otimes \dots \otimes \F_2^{n_d}$ (the definition is analogous to the definition of a system in $\F_2^{n_1}\otimes \dots \F_2^{n_d}$), which is contained in $f_2(d-1)\cB'_t$. By Bogolyubov's lemma, $4T=2T-2T$ contains a subspace $U\s \F_2^{n_1}$ of codimension at most $g_1(\d)$. For each $u\in U$, write $u=t_1+t_2+t_3+t_4$ arbitrarily, and let $Q_u=P_{t_1}\cap P_{t_2}\cap P_{t_3}\cap P_{t_4}$, which is a $4f_1(d-1,\d/2)$-system, by Lemma \ref{intersect}. Now $Q=\bigcup_{u\in U} (u\otimes Q_u)$ is an $f_1(d,\d)$-system provided that $f_1(d,\d)\geq \max(4f_1(d-1,\d/2),g_1(\d))$. Moreover, for any $u\in U,s\in Q_u$, we have $u\otimes s\in 4f_2(d-1)\cB'$, so $Q$ is suitable provided that $f_2(d)\geq 4f_2(d-1)$.
\end{proof}

The next lemma is the last ingredient needed to prove that (H') follows from (H).

\begin{lemma} \label{systemspace}
	Let $Q$ be a $k$-system, and for each non-empty $I\s \lbrack d\rbrack$ let $L_I\s \F_2^{I}$ be a subspace of codimension at most $l$. Let $T=\bigcap_I(L_I\otimes \F_2^{I^c})$. Then $Q\cap T$ contains an $f(d,k,l)$-system.
\end{lemma}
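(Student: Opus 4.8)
The plan is to mimic the construction in Lemma \ref{intersect}, building the desired system for $Q\cap T$ coordinate by coordinate, but now dealing with the constraints coming from \emph{all} the subspaces $L_I$ rather than from a single system $Q'$. The key observation is that $T=\bigcap_I(L_I\otimes\F_2^{I^c})$ is itself, up to a bounded change of parameters, something that can be intersected with a system in a controlled way, essentially because each $L_I$ only ``sees'' the coordinates in $I$. So first I would set up notation: let $Q$ have defining spaces $U\subset\F_2^{n_1}$ and $U_{u_1,\dots,u_{j-1}}\subset\F_2^{n_j}$ for $2\le j\le d$, each of codimension at most $k$. I want to produce spaces $V\subset\F_2^{n_1}$ and $V_{v_1,\dots,v_{j-1}}\subset\F_2^{n_j}$, each of codimension at most $f(d,k,l)$, so that $v_1\in V,\dots,v_d\in V_{v_1,\dots,v_{d-1}}$ forces $v_1\otimes\dots\otimes v_d\in T$ (and automatically in $Q$, since the new spaces will be subspaces of the old ones).

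The main step is to understand when a pure tensor $v_1\otimes\dots\otimes v_d$ lies in $L_I\otimes\F_2^{I^c}$. Writing $I=\{i_1<\dots<i_a\}$, the tensor $v_1\otimes\dots\otimes v_d$ lies in $L_I\otimes\F_2^{I^c}$ if and only if $v_{i_1}\otimes\dots\otimes v_{i_a}\in L_I$ (assuming the other $v_j$ are non-zero, and the zero case is harmless since such tensors are automatically in everything). Now $L_I$ has codimension at most $l$ in $\F_2^I=\bigotimes_{i\in I}\F_2^{n_i}$, so it is cut out by at most $l$ linear functionals on $\F_2^I$, each of which is a tensor $r_I\in\F_2^I$; the condition $v_{i_1}\otimes\dots\otimes v_{i_a}\in L_I$ becomes $r_I.(v_{i_1}\otimes\dots\otimes v_{i_a})=0$ for each such functional, i.e. $(r_I v_{i_1}\cdots v_{i_{a-1}}).v_{i_a}=0$ using the partial-contraction notation from the excerpt. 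The plan is then to process the coordinates $1,2,\dots,d$ in order, and at stage $j$, having already chosen $V,V_{v_1},\dots$ up to level $j-1$ and fixed $v_1,\dots,v_{j-1}$ in them, to intersect $U_{v_1,\dots,v_{j-1}}$ with the finitely many hyperplanes in $\F_2^{n_j}$ obtained as follows: for each non-empty $I$ with $\max I=j$, and each of the at most $l$ functionals $r_I$ defining $L_I$, the vector $r_I v_{i_1}\cdots v_{i_{a-1}}\in\F_2^{n_j}$ (a contraction of the fixed earlier coordinates against $r_I$) gives one linear constraint on $v_j$. The number of subsets $I$ with $\max I = j$ is at most $2^{d-1}$, so we add at most $2^{d-1}l$ constraints at each level, giving codimension at most $k+2^{d-1}l$ for the spaces at level $j\ge 2$, and similarly for $V$ at level $1$ (here $I=\{1\}$ is the only relevant set). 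Thus $f(d,k,l)=k+2^{d-1}l$ works.

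The one subtlety — and the step I would watch most carefully — is that the constraint imposed on $v_j$ via $r_I$ depends on the earlier choices $v_{i_1},\dots,v_{i_{a-1}}$, so a priori we cannot define a single space $V_{v_1,\dots,v_{j-1}}$ before those are chosen; but of course the definition of a system \emph{allows} the spaces $V_{v_1,\dots,v_{j-1}}$ to depend on $v_1,\dots,v_{j-1}$, so this is exactly the flexibility we need and there is no circularity: we simply set $V_{v_1,\dots,v_{j-1}}$ to be $U_{v_1,\dots,v_{j-1}}$ intersected with $\{v_j : (r_I v_{i_1}\cdots v_{i_{a-1}}).v_j = 0 \text{ for all } I \text{ with } \max I=j \text{ and all defining functionals } r_I\}$. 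One then checks by unwinding the definitions that any $v_1\otimes\dots\otimes v_d$ with $v_j\in V_{v_1,\dots,v_{j-1}}$ for all $j$ satisfies $v_{i_1}\otimes\dots\otimes v_{i_a}\in L_I$ for every non-empty $I$ (the relevant constraint having been imposed at level $\max I$), hence lies in $T$; and it lies in $Q$ since $V\subset U$ and $V_{v_1,\dots,v_{j-1}}\subset U_{v_1,\dots,v_{j-1}}$. This gives an $f(d,k,l)$-system inside $Q\cap T$, completing the proof.
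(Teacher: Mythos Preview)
Your proof is correct and follows essentially the same approach as the paper's: both process the coordinates $1,\dots,d$ in order, handling at level $j$ precisely the constraints coming from subsets $I$ with $\max I=j$ (equivalently, $I\subset[j]$ with $j\in I$), and both obtain the bound $f(d,k,l)=k+2^{d-1}l$ by observing that each such $L_I$ contributes at most $l$ linear constraints on $v_j$ once $v_1,\dots,v_{j-1}$ are fixed. The paper phrases the key step as a codimension estimate for $\big((\bigotimes_{i\in I\setminus\{j\}}u_i)\otimes U_{u_1,\dots,u_{j-1}}\big)\cap L_I$ inside $(\bigotimes_{i\in I\setminus\{j\}}u_i)\otimes U_{u_1,\dots,u_{j-1}}$, while you phrase it via the defining functionals $r_I$ and the contraction $(r_I v_{i_1}\cdots v_{i_{a-1}}).v_j$, but these are the same computation.
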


\begin{proof}
	Let the spaces of $Q$ be $U_{u_1,\dots,u_{j-1}}$. It suffices to prove that for every $1\leq j\leq d$, and every $u_1\in U,\ \dots,\ u_{j-1}\in U_{u_1,\dots,u_{j-2}}$, the codimension of $(u_1\otimes \dots\otimes u_{j-1}\otimes U_{u_1,\dots,u_{j-1}})\cap \bigcap_{I\subset \lbrack j\rbrack, j\in I}(L_I\otimes \F_2^{\lbrack j\rbrack \setminus I})$ in $u_1\otimes \dots\otimes u_{j-1}\otimes U_{u_1,\dots,u_{j-1}}$ is at most $g_1(d,l)$. Thus, it suffices to prove that for every $I\s \lbrack j\rbrack$ with $j\in I$, the codimension of $(u_1\otimes \dots\otimes u_{j-1}\otimes U_{u_1,\dots,u_{j-1}})\cap(L_I\otimes \F_2^{\lbrack j\rbrack \setminus I})$ in $u_1\otimes \dots\otimes u_{j-1}\otimes U_{u_1,\dots,u_{j-1}}$ is at most $g_2(l)$. But this is equivalent to the statement that $\big((\bigotimes_{i\in I\setminus \{j\}}u_i)\otimes U_{u_1,\dots,u_{j-1}}\big)\cap L_I$ has codimension at most $g_2(l)$ in $(\bigotimes_{i\in I\setminus \{j\}}u_i)\otimes U_{u_1,\dots,u_{j-1}}$, which clearly holds with $g_2(l)=l$.
\end{proof}

\begin{lemma} \label{htoh'}
	If (H) holds for $d$, then so does (H').
\end{lemma}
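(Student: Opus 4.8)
The plan is to combine Lemma~\ref{findsystem} and Lemma~\ref{systemspace} with hypothesis (H) for $d$. The point to notice is that a $k$-system, viewed as a sub-multiset of $\cB$, is automatically dense: with the notation of Definition~\ref{ksystem}, at each of the $d$ levels we choose a vector from a subspace of codimension at most $k$, so an $m$-system has size at least $2^{n_1-m}\cdot 2^{n_2-m}\cdots 2^{n_d-m}=2^{-dm}|\cB|$; moreover every tensor occurs in it with multiplicity at most its multiplicity in $\cB$, since the tuples $(u_1,\dots,u_d)$ arising in the system form a subset of all tuples. Consequently, once Lemma~\ref{systemspace} has produced a system lying inside the ``grid'' $T:=\bigcap_I(L_I\otimes\F_2^{I^c})$, we can feed that system back into (H) to obtain a forcing multiset $Q$; and since $T$ is a subspace, every bounded sum of elements of that system — in particular every element of $Q$ — still lies in $T$. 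This is precisely the extra containment that (H') asks for on top of (H).

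Concretely, let $\d>0$ and $l\in\N$, let $\cB'\subset\cB$ with $|\cB'|\geq\d|\cB|$, and let $L_I\subset\F_2^I$ have codimension at most $l$ for each non-empty $I\subset[d]$; put $T=\bigcap_I(L_I\otimes\F_2^{I^c})$. First apply Lemma~\ref{findsystem} to get a $k$-system $P$ whose elements lie in $m\cB'$, where $k$ depends on $d,\d$ and $m$ depends on $d$ only. Next apply Lemma~\ref{systemspace} to $P$ and the subspaces $L_I$: the multiset $P\cap T$ contains a $k'$-system $P'$, with $k'$ depending on $d,k,l$, hence on $d,\d,l$; and $P'\subset P\subset m\cB'$ while $P'\subset T$. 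By the density observation above, $P'$ is a sub-multiset of $\cB$ of density at least $2^{-dk'}$, so (H) for $d$ applies with $\cB'$ replaced by $P'$ and yields a multiset $Q$ with elements chosen from $f_1(d)P'$ that is $\bigl(f_2(d,2^{-dk'}),\,1-f_3(d)\bigr)$-forcing, where $f_1,f_2,f_3$ are the functions supplied by (H). Finally, $P'\subset m\cB'$ gives $f_1(d)P'\subset (f_1(d)m)\cB'$, while $P'\subset T$ and $T$ a subspace give $f_1(d)P'\subset T$; hence $Q$ has its elements in $\bigl(f_1(d)m\,\cB'\bigr)\cap\bigcap_I(L_I\otimes\F_2^{I^c})$. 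Taking the (H') functions to be $f_1(d)m$ (a function of $d$ alone), $f_3(d)$ (a function of $d$ alone) and $f_2\bigl(d,2^{-dk'}\bigr)$ (a function of $d,\d,l$, through $k'$) establishes (H') for $d$.

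The argument is essentially bookkeeping once Lemmas~\ref{findsystem} and~\ref{systemspace} are in hand. The two points that need a little care are: (i) checking that a $k'$-system, together with its multiplicities, really does embed in the multiset $\cB$ with density at least $2^{-dk'}$, so that (H) may legitimately be invoked with $P'$ in place of $\cB'$; and (ii) tracking the dependences so that the two ``universal'' functions of (H'), namely $f_1$ and $f_3$, come out depending only on $d$ — which works precisely because in (H) the constants $f_1$ and $f_3$ are already independent of the density parameter, only $f_2$ depending on it. I do not expect any genuine obstacle here; the real content of the induction is carried by the unconditional Lemmas~\ref{findsystem} and~\ref{systemspace} and by the base case $d=1$ of (H).
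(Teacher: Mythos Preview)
Your argument is correct and is essentially identical to the paper's own proof: both apply Lemma~\ref{findsystem} to obtain a system inside a bounded sumset of $\cB'$, then Lemma~\ref{systemspace} to push that system inside $T=\bigcap_I(L_I\otimes\F_2^{I^c})$, observe that the resulting system is dense in $\cB$, and invoke (H) with this system playing the role of $\cB'$. Your write-up is in fact slightly more explicit than the paper's in spelling out the density lower bound $2^{-dk'}$ for a $k'$-system and in justifying why bounded sums of elements of $P'$ remain in the subspace $T$.
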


\begin{proof}
	Let $\cB'$ and $L_I$ be given as in (H'). By Lemma \ref{findsystem}, we can choose a $g_1(d,\d)$-system in $g_2(d)\cB'$. Thus, by Lemma \ref{systemspace}, $(g_2(d)\cB')\cap\bigcap_I(L_I\otimes \F_2^{I^c})$ contains a $g_3(d,\d,l)$-system $\cB''$. Note that $\cB''\s \cB$ with $|\cB''|\geq g_4(d,\d,l)|\cB|$, where $g_4(d,\d,l)>0$. By hypothesis (H) applied to $\cB''$ in place of $\cB'$, it follows that there is a multiset $Q$, with elements chosen from $g_5(d)\cB''\s g_5(d)g_2(d)\cB'$, which is $(g_6(d,\d,l),1-g_7(d))$-forcing. Now $Q\subset \bigcap_I(L_I\otimes \F_2^{I^c})$, finishing the proof.
\end{proof}

The next result generalizes Lemma \ref{smallrank}. To state it, we need to find the equivalent of low-rank matrices in the higher dimensional case. The definition we use is essentially the same as that of the \emph{partition rank} of a tensor (see for example \cite{lovett} for a discussion of this notion). Indeed, if a tensor has partition rank at most $k$, then it is $k$-degenerate (in the sense of the next definition), and conversely if a tensor is $k$-degenerate, then it has partition rank is at most $2^{d-1}k$.
%in the sense that it is large if the partition rank is large and small if the partition rank is small, with a good dependence. \textbf{***Maybe make that more precise?***}
The second author has shown that the partition rank is also related to the \emph{analytic rank} of a tensor \cite{janzer}, which we do not define here (but again see \cite{lovett}), with a tower-type dependence, improving on the previously known Ackermann dependence. In this section we use a very similar argument, but since we do not care about bounds, we present it in qualitative form for ease of reading and for the sake of completeness. 

\begin{definition}
	Let $k$ be a positive integer. We say that $r\in \cG$ is $k$-\emph{degenerate} if there is a collection of subspaces $H_I\subset\F_2^I$, one for each non-empty proper subset $I\s \lbrack d\rbrack$ and each one of dimension at most $k$, such that $r\in \sum_{I\subset \lbrack d-1 \rbrack,I\neq \emptyset}H_I\otimes H_{I^c}$.
\end{definition}

If $r\in H_I\otimes \F_2^{I^c}$ with $\dim(H_I)\leq k$, then $r\in H_I\otimes H_{I^c}$ for some $H_{I^c}\s \F_2^{I^c}$ of dimension at most $k$. (This follows by writing $r$ as $\sum_{j\leq m} s_j\otimes t_j$ with $\{s_j\}$ a basis for $H_I$ and letting $H_{I^c}$ be the span of all $t_j$.) Thus, $r$ is $k$-degenerate if and only if we have the apparently weaker condition that $r\in \sum_{I\s \lbrack d-1 \rbrack, I\neq \emptyset} H_I\otimes \F_2^{I^c}$ for some collection of subspaces $H_I\s \F_2^{I}$ of dimension at most $k$, or equivalently the condition that $r\in \sum_{I\s \lbrack d-1 \rbrack, I\neq \emptyset} \F_2^{I}\otimes H_{I^c}$ for some collection of subspaces $H_{I^c}\s \F_2^{I^c}$ of dimension at most $k$.

%In the next proof, it may be helpful to bear in mind the simple observation that if $r\in\F_p^{n_1}\otimes\dots\otimes F_p^{n_{d-1}}$ and $v_i\in\F_p^{n_i}$ for each $1\leq i\leq d-1$, then $r.(v_1\otimes v_2\otimes v_{d-1})=rv_1v_2\dots v_{d-1}$, where the product on the right should be understood as calculating $rv_1$, then multiplying that by $v_2$, and so on. It will be notationally convenient in places to use the expression on the right-hand side, but it can be replaced by the expression on the left-hand side without making any difference to the meaning of the statements.

\begin{lemma} \label{focusondeg}
	Suppose that (H) holds for $d-1$. Let $\cD$ be the multiset $\{u_1\otimes \dots \otimes u_{d-1}:u_1\in \F_2^{n_1},\dots,u_{d-1}\in\F_2^{n_{d-1}}\}$.
	
	\begin{enumerate}[label=(\roman*)]
		\item Let $\cD'\s \cD$, and for each $t\in \cD'$, let $U_t$ be a subspace of $\F_2^{n_d}$ of codimension at most $k$ with the codimensions of the $U_t$ all equal. Let $Q$ be the multiset $\bigcup_{t\in \cD'}(t\otimes U_t)$. If $m=2^{k+3}$, and $r_1,\dots,r_m\in \cG$ have the property that for every $i$, $r_i.q=0$ holds for at least $\frac{3}{4}|Q|$ choices of $q\in Q$, then there exist $i\neq j$ such that $(r_i-r_j)(v_1\otimes \dots \otimes v_{d-1})=0$ for at least $f_1(k)|\cD'|$ choices of $v_1\otimes \dots \otimes v_{d-1}\in \cD'$, where $f_1(k)>0$ depends only on $k$.
		
		\item Let $c>0$ and let $r\in \cG$ be such that $r(v_1\otimes \dots \otimes v_{d-1})=0$ for at least $c|\cD|$ choices $v_1\otimes \dots\otimes v_{d-1}\in \cD$. Then $r$ is $f_2(d,c)$-degenerate.
		
		\item Let $\d>0$ and let $\cB'\s \cB$ be such that $|\cB'|\geq \d |\cB|$. Then there exists $Q\subset 4\cB'$ with the following property. If $m\geq f_3(\d)$ and $r_1,\dots,r_m\in \cG$ are such that for each $i$, $r_i.q=0$ for at least $\frac{3}{4}|Q|$ choices $q\in Q$, then there exist $i\neq j$ such that $r_i-r_j$ is $f_4(d,\d)$-degenerate.
		
		\item Let $\d>0$ and let $\cB'\s \cB$ be such that $|\cB'|\geq \d |\cB|$. Then there exist $Q\subset 4\cB'$ and a subspace $V_{\lbrack d\rbrack}\s \F_2^{\lbrack d\rbrack}$ of dimension at most $f_3(\d)$ with the following property. Any array $r$ with $r.q=0$ for at least $\frac{3}{4}|Q|$ choices $q\in Q$ can be written as $r=x+y$ where $x\in V_{\lbrack d \rbrack}$ and $y$ is $f_4(d,\d)$-degenerate.
	\end{enumerate}
\end{lemma}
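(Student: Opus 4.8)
The plan is to establish the four parts in the stated order, since each is used in proving the next; the only part with real content is (ii), which is where the inductive hypothesis (H) for $d-1$ enters. For (i) I would copy the proof of Lemma~\ref{smallrank} essentially word for word. After passing to subspaces so that the $U_t$ have a common codimension (this only shrinks $Q$), use that $r_i.(t\otimes u)=(r_it).u$. A routine averaging shows that for a constant fraction of $t\in\cD'$ one has $r_it\in U_t^\perp$, because for such $t$ the equation $(r_it).u=0$ holds for more than half the $u\in U_t$ and its solution set is a subspace of $U_t$. Summing over the $m=2^{k+3}$ arrays and pigeonholing, a constant fraction of the $t$ satisfy $r_it\in U_t^\perp$ for at least $m/4=2^{k+1}>2^k\ge|U_t^\perp|$ indices $i$, which forces a collision $r_it=r_jt$; a final pigeonhole over the fewer than $m^2$ pairs fixes the pair, and $f_1(k)>0$ drops out of the counting.

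For (ii), write $r=\sum_{l=1}^{n_d}s_l\otimes e_l$ with $e_1,\dots,e_{n_d}$ the standard basis of $\F_2^{n_d}$ and $s_l\in\F_2^{\lbrack d-1\rbrack}$ the slices of $r$ in the last coordinate. By the array-multiplication convention, $r(v_1\otimes\dots\otimes v_{d-1})=0$ holds precisely when $v_1\otimes\dots\otimes v_{d-1}$ is orthogonal to $S:=\mathrm{span}(s_1,\dots,s_{n_d})$, so the hypothesis of (ii) says exactly that the submultiset $\cB''\subset\cD$ of rank-1 tensors annihilating $S$ has size at least $c|\cD|$ --- and $\cD$ is precisely the multiset playing the role of $\cB$ in dimension $d-1$. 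Applying (H) for $d-1$ to $\cB''$ gives a multiset $Q''$, with elements in $g_1(d-1)\cB''$, that is $(g_2(d-1,c),1-g_3(d-1))$-forcing. Every element of $Q''$, being a sum of boundedly many elements of $\cB''$, annihilates $S$; hence every $s_l$ is orthogonal to every element of $Q''$, and the forcing property then places each $s_l$, and so $S$, inside $\sum_{\emptyset\neq I\subset\lbrack d-1\rbrack}W_I\otimes\F_2^{\lbrack d-1\rbrack\setminus I}$ with $\dim W_I\le g_2(d-1,c)$. Tensoring with $\F_2^{n_d}$ and invoking the remark after the definition of $k$-degeneracy, $r$ is $g_2(d-1,c)$-degenerate, so $f_2(d,c):=g_2(d-1,c)$ works.

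Parts (iii) and (iv) are then packaging. For (iii) I would first manufacture a $Q$ of the form required by (i): with $\cB'_t=\{u:t\otimes u\in\cB'\}$, averaging over $t\in\cD$ produces $T=\{t:|\cB'_t|\ge\frac{\d}{2}2^{n_d}\}$ with $|T|\ge\frac{\d}{2}|\cD|$; Bogolyubov's lemma inside each fibre gives $U_t\subset 4\cB'_t$ of bounded codimension, which after shrinking we may take of equal codimension, and then $Q:=\bigcup_{t\in T}(t\otimes U_t)\subset 4\cB'$. Applying (i) to the first $2^{k_1(\d)+3}$ of the $r_i$ (so $f_3(\d):=2^{k_1(\d)+3}$) yields a pair $i\neq j$ with $(r_i-r_j)(v_1\otimes\dots\otimes v_{d-1})=0$ on a $c(\d)$-fraction of $\cD$, and then (ii) makes $r_i-r_j$ be $f_4(d,\d)$-degenerate. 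For (iv), with $Q,f_3,f_4$ as in (iii), set $X=\{r\in\cG:r.q=0\text{ for at least }\tfrac34|Q|\text{ choices }q\in Q\}$ and greedily pick $x_1,x_2,\dots\in X$ whose pairwise differences are never $f_4(d,\d)$-degenerate, for as long as possible; by (iii) this halts after fewer than $f_3(\d)$ steps, and at that point every $r\in X$ differs from some $x_i$ by an $f_4(d,\d)$-degenerate array, so $V_{\lbrack d\rbrack}:=\mathrm{span}(x_1,\dots,x_N)$ has dimension less than $f_3(\d)$ and $r=x_i+(r-x_i)$ is the decomposition sought.

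The main obstacle is Part (ii): it is a qualitative instance of the fact that a tensor vanishing on a positive fraction of rank-1 inputs has bounded degeneracy (equivalently partition) rank, and it is the only step genuinely using the induction on $d$. The point to be careful about there is matching the array-multiplication conventions so that the vanishing hypothesis on $r$ translates cleanly into orthogonality of rank-1 tensors to the span of the slices of $r$; once that identification is in place, (H) for $d-1$ finishes the argument. Parts (i), (iii) and (iv) are, respectively, straightforward averaging-and-pigeonhole, a Bogolyubov-plus-averaging reduction, and a greedy-covering argument, and I do not anticipate any difficulty with them.
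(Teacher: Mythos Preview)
Your proposal is correct and follows essentially the same route as the paper: part (i) is the pigeonhole/averaging argument of Lemma~\ref{smallrank} verbatim, part (ii) expands $r$ along a basis of $\F_2^{n_d}$ and applies (H) for $d-1$ to the multiset of rank-1 tensors annihilating all slices, part (iii) builds $Q$ via fibrewise Bogolyubov and then combines (i) with (ii), and part (iv) is the greedy maximal-set argument. The only superfluous remark is the sentence about passing to subspaces to equalise codimensions in (i), since this is already assumed in the hypothesis.
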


\begin{proof}
	\begin{enumerate}[label=(\roman*)]
		\item By averaging, for every $i\leq m$ there are at least $|\cD'|/2$ choices of $t\in\cD'$ such that $r_i.(t\otimes u)=0$ for more than half the $u\in U_t$. But $r_i.(t\otimes u)=(r_it).u$, so for these $t$ we have that $r_it\in U_t^\perp$. By further averaging, it follows that for at least $|\cD'|/4$ choices of $t\in \cD'$ we have that $r_it\in U_t^{\perp}$ for at least $m/4$ choices of $i\leq m$. Since $m/4>|U_t^{\perp}|$, for every such $t$, there exist $i\neq j$ with $r_it=r_jt$. Thus, for some $i\neq j$, there are at least $\frac{|\cD'|}{4m^2}$ choices of $t\in \cD'$ with $r_it=r_jt$, which implies the statement we wish to prove.
		\item Write $r=\sum_{i} s_i\otimes w_i$ where $s_i\in \F_2^{n_1}\otimes \dots \otimes \F_2^{n_{d-1}}$ and $\{w_i\}$ is a basis for $\F_2^{n_d}$. Note that $r(v_1\otimes \dots \otimes v_{d-1})=0$ if and only if $s_i.(v_1\otimes \dots \otimes v_{d-1})=0$ for every $i$. Let $\cD'$ be the multiset consisting of all $v_1\otimes \dots \otimes v_{d-1}$ with $s_i.(v_1 \otimes \dots \otimes v_{d-1})=0$ for every $i$. By assumption, $|\cD'|\geq c|\cD|$. Since (H) holds for $d-1$, there exists a multiset $Q$, with elements chosen from $g_1(d)\cD'$, which is $(g_2(d,c),1-g_3(d))$-forcing. Since $s_i.q=0$ for every $q\in Q$, it follows that there exist subspaces $V_I\s \F_2^{I}$ for every $I\s \lbrack d-1\rbrack,I\neq \emptyset$, of dimension at most $g_2(d,c)$ such that $s_i\in \sum_{I}V_I\otimes \F_2^{\lbrack d-1\rbrack \setminus I}$ for all $i$. In particular, $r\in \sum_{I\s \lbrack d-1 \rbrack, I\neq \emptyset} V_I\otimes \F_2^{I^c}$. Thus, $r$ is $g_2(d,c)$-degenerate.
		\item Let $\cD'=\{t\in \cD: t\otimes u\in \cB' \text{ for at least } \frac{\d}{2}2^{n_d} \text{ choices } u\in \F_2^{n_d}\}$. Clearly, we have $|\cD'|\geq \frac{\d}{2}|\cD|$. Moreover, by Bogolyubov's lemma, for every $t\in \cD'$, there exists a subspace $U_t\s \F_2^{n_d}$ of codimension at most $g_4(\d)$ such that $t\otimes U_t\s 4\cB'$. After passing to suitable subspaces, we may assume that all $U_t$ have the same codimension. Now let $Q=\bigcup_{t\in \cD'}(t\otimes U_t)$. By (i), if $f_3(\d)\geq 2^{g_4(\d)+3}$, then there exist $i\neq j$ such that $(r_i-r_j)t=0$ for at least $g_5(\d)|\cD'|$ choices $t\in \cD'$ (where $g_5(\d)>0$) and therefore also for at least $g_6(\d)|\cD|$ choices $t\in \cD$ (where $g_6(\d)>0$). By (ii), it follows that $r_i-r_j$ is $g_7(d,\d)$-degenerate.
		\item Choose $Q$ as in (iii) and let $r_1,\dots,r_m$ be a maximal set such that for all $i$ we have $r_i.q=0$ for at least $\frac{3}{4}|Q|$ choices $q\in Q$, and for all $i\neq j$, $r_i-r_j$ is not $f_4(d,\d)$-degenerate. Then, by (iii), we have $m< f_3(\d)$. Let $V_{\lbrack d\rbrack}$ be the span of all the $r_i$. The result follows by the maximality of $\{r_1,\dots,r_m\}$.
	\end{enumerate}
\end{proof}

The next lemma is the key step to complete the proof of Lemma \ref{mainlemma}. In order to state it, we need to introduce a total ordering $\prec$ of the non-empty subsets of $\lbrack d-1\rbrack$. It does not matter exactly what the ordering is, but we require it to have the property that if $J\subsetneq I$ then $J\prec I$. %\textbf{***I'll try to check whether all we actually need is that if $J\subsetneq I$ then $J\prec I$.***}

To understand the point of the next lemma, observe that we take an array $r$ that belongs to the sum of a certain set of subspaces, some of which depend on $r$, and we show, using the hypothesis that $r.q=0$ for almost all $q\in Q$, that it is contained in a similar sum, but with the subspace corresponding to $I^c$ no longer depending on $r$. By applying this lemma repeatedly, we shall remove all dependence on $r$ from the right-hand side.

\begin{lemma} \label{keylemma}
	Suppose that (H') holds for every $d'<d$. Let $\d>0$ and let $\cB'\s \cB$ be such that $|\cB'|\geq \d|\cB|$. Let $I\s \lbrack d-1\rbrack, I\neq \emptyset$ and for each $J\prec I$ let $W_J\s \F_2^{J},W_{J^c}\s \F_2^{J^c}$ be subspaces of dimension at most $k$. In addition, let $W_{\lbrack d\rbrack}\s \F_2^{\lbrack d\rbrack}$ be a subspace of dimension at most $k$. Then there exists a multiset $Q$ with elements chosen from $f_1(d)\cB'$ with the following property. Any array 
	\[r\in W_{\lbrack d\rbrack}+ \sum_{J\prec I} (W_J\otimes \F_2^{J^c}+\F_2^{J}\otimes W_{J^c})+\sum_{J\succeq I} \F_2^{J}\otimes H_{J^c}(r)\] 
	with $\dim(H_{J^c}(r))\leq k$ and the property that $r.q=0$ for at least $(1-f_2(d))|Q|$ choices $q\in Q$ is contained in a subspace 
	\[W_{\lbrack d \rbrack}+\sum_{J\preceq I} (U_J\otimes \F_2^{J^c}+\F_2^{J}\otimes U_{J^c})+\sum_{J\succ I} \F_2^{J}\otimes K_{J^c}(r)\] 
	for some $U_J\s \F_2^{J},U_{J^c}\s \F_2^{J^c}$ not depending on $r$ and some $K_{J^c}(r)\s \F_2^{J^c}$ possibly depending on $r$, all of dimension at most $f_3(d,\d,k)$.
\end{lemma}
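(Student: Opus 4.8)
The plan is to imitate the proof of Lemma~\ref{reduced}, with the pair $(\F_2^{n_1},\F_2^{n_2})$ there replaced by $(\F_2^I,\F_2^{I^c})$, and with the subspaces and orthogonal complements used there replaced by systems and forcing multisets furnished by the hypothesis (H') in the dimensions $|I|$ and $|I^c|$; these are legitimate since $\emptyset\neq I\subseteq[d-1]$ forces $1\le|I|,|I^c|\le d-1<d$. The summands $W_{[d]}$, the $W_J\otimes\F_2^{J^c}+\F_2^J\otimes W_{J^c}$ for $J\prec I$, and the $\F_2^J\otimes H_{J^c}(r)$ for $J\succ I$ will play the role of the ``low-rank part of $r$'' in Lemma~\ref{reduced}: they are carried along throughout, the $J\succ I$ ones being allowed to grow into the slightly larger $\F_2^J\otimes K_{J^c}(r)$.

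\emph{Construction of $Q$.} Write $\cB^I,\cB^{I^c}$ for the multisets of rank-one tensors over $I$ and over $I^c$, so that $\cB$ is their product; slicing $\cB'$ over $\cB^I$ and averaging gives a set $S_0\subseteq\cB^I$ of density $\ge\delta/2$ whose members $s$ have dense slice $\cB'_s=\{v\in\cB^{I^c}:s\otimes v\in\cB'\}$. For each $s\in S_0$, Lemma~\ref{findsystem} in dimension $|I^c|$ followed by (H') in dimension $|I^c|$ produces a forcing multiset $P_s$ over $I^c$ inside a bounded multiple of $\cB'_s$; Lemma~\ref{findsystem} in dimension $|I|$ produces a bounded-codimension system $\widehat S$ over $I$ inside a bounded multiple of $S_0$. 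For $t=s_1+\dots+s_p\in\widehat S$ ($p$ bounded, $s_i\in S_0$) set $P_t$ to be a forcing refinement of $P_{s_1}\cap\dots\cap P_{s_p}$, which is still a bounded-codimension system by Lemma~\ref{intersect} and can be re-forced by (H') in dimension $|I^c|$. Finally replace $\widehat S$ by the sub-system $S$ of those $t\in\widehat S$ that annihilate $W_J\otimes\F_2^{I\setminus J}$ for every $J\subsetneq I$: since each $W_J$ has bounded dimension, $S$ is still dense in $\cB^I$ and still a bounded-codimension system. Then $Q:=\bigcup_{t\in S}(t\otimes P_t)\subseteq f_1(d)\cB'$.

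\emph{Contraction.} Suppose $r$ is as in the statement and $r.q=0$ for at least $(1-f_2(d))|Q|$ of the $q\in Q$. Averaging over $t\in S$ and then over $v\in P_t$ shows that, outside a small fraction of $t\in S$, one has $(rt).v=0$ for all but a small fraction of $v\in P_t$, whence (forcing $P_t$ over $I^c$, with $f_2(d)$ small enough) $rt$ lies in $\Phi_t:=\sum_{\emptyset\neq K\subseteq I^c}V^{(t)}_K\otimes\F_2^{I^c\setminus K}$ with all $\dim V^{(t)}_K$ bounded and with $\Phi_t$ depending on $P_t$ but \emph{not} on $r$. Separately, contracting each summand of the given expression for $r$ against $t\in S$ shows that for \emph{every} $t\in S$,
\[
rt\ \in\ \Lambda_t\ +\ \sum_{\emptyset\neq K\subsetneq I^c}M^{(t)}_K\otimes\F_2^{I^c\setminus K},
\]
where $\Lambda_t=H_{I^c}(r)+W_{[d]}t+\sum_{J\subsetneq I}(\F_2^J\otimes W_{J^c})t$ has bounded dimension and contains $H_{I^c}(r)$, and the $M^{(t)}_K$ have bounded dimension. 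Here the choice of $S$ is exactly what makes the contractions of the $J\subsetneq I$ copies of $W_J\otimes\F_2^{J^c}$ vanish (they would otherwise be all of $\F_2^{I^c}$), while for every other $J$ the contracted term is a single $M^{(t)}_K\otimes\F_2^{I^c\setminus K}$ with $\emptyset\neq K\subsetneq I^c$, because $\dim W_J,\dim W_{J^c},\dim H_{J^c}(r)\le k$.

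\emph{Pinning down $U_{I^c}$ and $U_I$.} As in Lemma~\ref{reduced}, let $U_{I^c}$ be the span of the $x\in\F_2^{I^c}$ that lie in the $K=I^c$ component $V^{(t)}_{I^c}$ of $\Phi_t$ for a positive fraction of $t\in S$; this is a bounded-dimension subspace independent of $r$. Using that $H_{I^c}(r)$ ranges over a bounded-dimension space (the analogue of ``$ru\in H_2(r)$''), one deduces that for all but a small fraction of $t\in S$ the $K=I^c$ part of $rt$ lies in $U_{I^c}$ plus the contractions of $W_{[d]}$ and of the $W_J$-terms. The set of $t\in\F_2^I$ for which $rt$ lies in this prescribed form is a subspace of $\F_2^I$ containing all but a small fraction of the bounded-codimension system $S$; since a bounded-codimension system almost contained in a subspace forces that subspace to have bounded codimension, this subspace has bounded codimension, and we take $U_I$ to be its orthogonal complement. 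Checking that $rt$ then lies in $\bigl(W_{[d]}+\sum_{J\preceq I}(U_J\otimes\F_2^{J^c}+\F_2^J\otimes U_{J^c})+\sum_{J\succ I}\F_2^J\otimes K_{J^c}(r)\bigr)t$ for all $t\in\F_2^I$ — taking $U_J=W_J,\ U_{J^c}=W_{J^c}$ for $J\prec I$, enlarging each $H_{J^c}(r)$ to a bounded-dimension $K_{J^c}(r)$ to absorb the leftover $K\subsetneq I^c$ components, and noting the inclusion is vacuous for $t\notin U_I^{\perp}$ — gives $r$ itself in that subspace, which is the conclusion. I expect the main obstacle to be the bookkeeping in the contraction step: one must check that after the harmless-looking restriction on $S$ each of the $2^{O(d)}$ contracted terms is genuinely of the form $(\text{bounded-dimension subspace})\otimes\F_2^{(\text{proper subset of }I^c)}$ — this is precisely where the ordering requirement $J\subsetneq I\Rightarrow J\prec I$ is used — and then to run the counting-and-system argument of Lemma~\ref{reduced} uniformly in $t$ despite the $t$-dependence of the ``junk'' subspaces and of $\Phi_t$; arranging the fibres $P_t$ to be simultaneously inside a bounded multiple of $\cB'$ and forcing over $I^c$ is the secondary place where (H') in dimensions $|I^c|$ and $|I|$ genuinely enters.
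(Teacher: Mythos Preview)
Your overall architecture is right and close to the paper's, but there is a genuine gap in the step ``pinning down $U_I$''. You build the $I$-side object $S$ only as a bounded-codimension \emph{system} (via Lemma~\ref{findsystem}), never as a \emph{forcing} multiset; and then you define $U_I$ as the orthogonal complement of the subspace $\{t\in\F_2^I:rt\ \text{has the prescribed form}\}$. That subspace, and hence your $U_I$, depends on $r$. The lemma requires $U_I$ (and $U_{I^c}$) to be independent of $r$, and an $r$-dependent $U_I$ cannot be absorbed into the $K_{J^c}(r)$'s, since those live only at indices $J\succ I$. Your appeal to ``a bounded-codimension system almost contained in a subspace forces bounded codimension'' bounds $\dim U_I$ uniformly but does not fix $U_I$ itself.

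This is exactly the point at which the paper invokes (H') in dimension $|I|$ rather than just Lemma~\ref{findsystem}: it takes $Q'$ to be $(h_1,1-h_2)$-\emph{forcing} over $\F_2^I$, so that the containing subspace $\sum_{J\subseteq I}L_J\otimes\F_2^{I\setminus J}$ is determined by $Q'$ alone, and then reads off $r$-independent $L_J$'s directly from the forcing property applied to the coefficients $s_j'$ that arise from the part of $r_4$ not captured by $X$. A second, smaller discrepancy is your definition of $U_{I^c}$ as the span of the $x$ lying in $V^{(t)}_{I^c}$ for many $t$: the paper's set $X$ (and the resulting $Z$) must allow an offset $t(s)\in V_{I^c}(s)+\sum_{J\subsetneq I}(\F_2^J\otimes W_{J^c})s$ and a \emph{degenerate} error, because the contraction $r_4s$ only matches $V_{I^c}(s)$ up to the contracted $W$-terms and the degenerate contribution of $r_2$. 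Without that, the counting that bounds $|X|$ and yields a finite spanning set $Y$ does not go through. Both issues are fixed by replacing your system $S$ with a forcing multiset $Q'$ over $I$ (this is what (H') for $|I|$ is for), and by defining the $U_{I^c}$-side object via the paper's degeneracy-up-to-offset condition rather than raw membership in $V^{(t)}_{I^c}$.
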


\begin{proof}
	After reordering if necessary, we may assume that $I=\lbrack a\rbrack$ for some $1\leq a \leq d-1$. The next claim gives us a multiset $Q$ with certain properties. Once we have it, we shall use those properties to show that $Q$ satisfies the conclusion of the lemma.
	\bigskip
	
	\emph{Claim.} There exist a multiset $Q'$ with elements chosen from $\bigcap_{J\subset I,J\neq I,J\neq \emptyset}(W_J^{\perp}\otimes \F_2^{I \setminus J})$ which is $(h_1(d,\d,k),1-h_2(d))$-forcing (with $h_2(d)>0$), and for each $s\in Q'$, a multiset $Q_s$ with elements chosen from $\F_2^{I^c}$ which is $(h_3(d,\d),1-h_4(d))$-forcing (with $h_4(d)>0$) such that $\max_{s\in Q'}|Q_s|\leq 2\min_{s\in Q'}|Q_s|$ and setting $Q$ to be the multiset $\{s\otimes t: s\in Q',t\in Q_s\}=\cup_{s\in Q'}(s\otimes Q_s)$, the elements of $Q$ belong to $h_5(d)\cB'$
	
	\emph{Proof of Claim.} Let $\cC$ be the multiset $\{u_1\otimes \dots \otimes u_a: u_i\in \F_2^{n_i}\}$ and let $\cD$ be the multiset $\{u_{a+1}\otimes \dots \otimes u_d: u_i\in \F_2^{n_i}\}$. For each $s\in \cC$, let $\cD_s=\{t\in \cD: s\otimes t\in \cB'\}$. Also, let $\cC'=\{s\in \cC: |\cD_s|\geq \frac{\d}{2}|\cD|\}$. Clearly, $|\cC'|\geq \frac{\d}{2}|\cC|$. By Lemma \ref{findsystem}, for every $s\in \cC'$ there exists a $g_1(d,\d)$-system $R_s$ in $g_2(d)\cD_s$. By (H') applied to $a$, there exists a multiset $Q'$ with elements chosen from $(g_3(d)\cC')\cap \bigcap_{J\subset I,J\neq I,J\neq \emptyset}(W_J^{\perp}\otimes \F_2^{I \setminus J})$ which is $(g_4(d,\d,k),1-g_5(d))$-forcing for some $g_5(d)>0$. For every $s\in Q'$, choose $s_1,\dots,s_l\in \cC'$ with $l\leq g_3(d)$ such that $s=s_1+\dots+s_l$, and let $P_s=\bigcap_{i\leq l}R_s$. By Lemma \ref{intersect}, $P_s$ contains a $g_6(d,\d)$-system, therefore $|P_s|\geq g_7(d,\d)|\cD|$ for some $g_7(d,\d)>0$. By (H) for $d-a$, for every $s\in Q'$ there exists a multiset $Q_s$ with elements chosen from $g_8(d)P_s$ which is $(g_9(d,\d),1-g_{10}(d))$-forcing for some $g_{10}(d)>0$. Notice that if we repeat every element of $Q_s$ the same number of times, then the multiset we obtain is still $(g_9(d,\d),1-g_{10}(d))$-forcing, so we may assume that $\max_{s\in Q'}|Q_s|\leq 2\min_{s\in Q'}|Q_s|$. Define $Q=\{s\otimes t:s\in Q',t\in Q_s\}=\bigcup_{s\in Q'}(s\otimes Q_s)$. Note that as $R_s\subset g_2(d)\cD_s$ for all $s\in \cC'$, we have $s\otimes R_s\subset g_2(d)\cB'$ for all $s\in \cC'$. But the elements of $Q'$ are chosen from $g_3(d)\cC'$, so $s\otimes P_s\s g_3(d)g_2(d)\cB'$ for all $s\in Q'$. Finally, the elements of $Q_s$ are chosen from $g_8(d)P_s$, so the elements of $s\otimes Q_s$ are chosen from $g_8(d)g_3(d)g_2(d)\cB'$ for every $s\in Q'$. This completes the proof of the claim.
	
	\medskip
	
	Now let us show that $Q$ does indeed satisfy the conclusion of the lemma. Since for every $s\in Q'$, $Q_s$ is $(h_3(d,\d),1-h_4(d))$-forcing, there exist subspaces $V_J(s)\s \F_2^{J}$ for every $J\s I^c,J\neq \emptyset$, with dimension at most $h_3(d,\d)$ such that the set of arrays $t\in \F_2^{I^c}$ with $t.q=0$ for at least $(1-h_4(d))|Q_s|$ choices $q\in Q_s$ is contained in $\sum_{J\s I^c,J\neq \emptyset} V_J(s)\otimes \F_2^{I^c\setminus J}$. Let $f_2(d)=\frac{1}{4}h_2(d)h_4(d)$. Now if $r\in \cG$ has $r.q=0$ for at least $(1-f_2(d))|Q|$ choices $q\in Q$, then by averaging, for at least $(1-\frac{1}{2}h_2(d))|Q'|$ choices $s\in Q'$, we have $r.(s\otimes t)=0$ for at least $(1-h_4(d))|Q_s|$ choices $t\in Q_s$. Therefore (noting that $r.(s\otimes t)=(rs).t$), $rs\in \sum_{J\s I^c,J\neq \emptyset} V_J(s)\otimes \F_2^{I^c\setminus J}$ for at least $(1-\frac{1}{2}h_2(d))|Q'|$ choices $s\in Q'$.
	
	Now let $g_{11}(d,k)=\frac{h_2(d)}{2\cdot 2^{k}}$ and $g_{12}(d,\d,k)=h_3(d,\d)+2^{d+1}k$. Let $X$ be the subset of $\F_2^{I^c}$ consisting of those arrays $x$ such that for at least $g_{11}(d,k)|Q'|$ choices $s\in Q'$, there exists some $t(s)\in V_{I^c}(s)+\sum_{J\subset I,J\neq I}((\F_2^{J}\otimes W_{J^c})s)$ for which $x-t(s)$ is $g_{12}(d,\d,k)$-degenerate. (Here and below, for a subspace $L\s \cG$ and an array $s\in \F_2^{I}$, we write $Ls$ for the subspace $\{rs: r\in L\}\s \F_2^{I^c}$.) Choose a maximal subset $\{x_1,\dots,x_m\}\s X$ such that no $x_i-x_j$ with $i\neq j$ is $2g_{12}(d,\d,k)$-degenerate. Then there do not exist $i\neq j$, $s\in Q'$ and $t\in V_{I^c}(s)+\sum_{J\subset I,J\neq I} ((\F_2^{J}\otimes W_{J^c})s)$ with $x_i-t$ and $x_j-t$ both $g_{12}(d,\d,k)$-degenerate. It follows, by the definition of $X$ and the fact that the dimension of $V_{I^c}(s)+\sum_{J\subset I,J\neq I} ((\F_2^{J}\otimes W_{J^c})s)$ is at most $h_3(d,\d)+2^dk$, that $mg_{11}(d,k)|Q'|\leq |Q'|\cdot 2^{h_3(d,\d)+2^dk}$, and therefore that $m\leq g_{13}(d,\d,k)$. Thus, there exists a set $Y\s X$ of size at most $g_{13}(d,\d,k)$ such that for any $x\in X$, there exists some $y\in Y$ such that $x-y$ is $2g_{12}(d,\d,k)$-degenerate. Let $Z=\text{span}(Y)$. Then $\dim(Z)\leq g_{13}(d,\d,k)$, and for every $x\in X$ there is some $z\in Z$ such that $x-z$ is $2g_{12}(d,\d,k)$-degenerate.
	
	Let $r\in W_{\lbrack d\rbrack}+\sum_{J\prec I} (W_J\otimes \F_2^{J^c}+\F_2^{J}\otimes W_{J^c})+\sum_{J\succeq I} \F_2^{J}\otimes H_{J^c}(r)$ with $\dim(H_{J^c}(r))\leq k$ and the property that $r.q=0$ for at least $(1-f_2(d))|Q|$ choices $q\in Q$. Let $Q'(r)$ be the submultiset of $Q'$ consisting of those $s\in Q'$ for which $rs\in \sum_{J\s I^c,J\neq \emptyset} V_J(s)\otimes \F_2^{I^c\setminus J}$. As we have seen two paragraphs above, $|Q'(r)|\geq (1-\frac{1}{2}h_2(d))|Q'|$.
	
	Note that we can write $r=r_1+r_2+r_3+r_4$ where $r_1\in \sum_{J\subset I,J\neq I,J\neq \emptyset} W_J\otimes \F_2^{J^c}$, $r_2\in \sum_{J\prec I, J\not \subset I} (W_J\otimes \F_2^{J^c}+\F_2^{J}\otimes W_{J^c})+\sum_{J\succ I} \F_2^{J}\otimes H_{J^c}(r)$, $r_3\in W_{\lbrack d\rbrack}+\sum_{J\subset I,J\neq I,J\neq \emptyset} \F_2^{J}\otimes W_{J^c}$ and $r_4\in \F_2^{I}\otimes H_{I^c}(r)$. Since the elements of $Q'$ belong to $\bigcap_{J\subset I,J\neq I,J\neq \emptyset}(W_J^{\perp}\otimes \F_2^{I \setminus J})$, we have $r_1s=0$ for every $s\in Q'$. Since $\dim(W_J),\dim(W_{J^c}),\dim(H_{J^c}(r))\leq k$, $r_2s$ is $2^{d+1}k$-degenerate. Also, $r_3s\in \sum_{J\subset I,J\neq I} ((\F_2^{J}\otimes W_{J^c})s)$. It follows that for every $s\in Q'(r)$, there exists some $t(s)\in V_{I^c}(s)+\sum_{J\subset I,J\neq I} ((\F_2^{J}\otimes W_{J^c})s)$ such that $r_4s-t(s)$ is $g_{12}(d,\d,k)=(h_3(d,\d)+2^{d+1}k)$-degenerate (we have used that $\dim (V_{J}(s))\leq h_3(d,\d)$). In particular, if $t\not \in X$, then the number of choices $s\in Q'(r)$ for which $r_4s=t$ is at most $g_{11}(d,\d,k)|Q'|$. On the other hand, notice that $r_4s\in H_{I^c}(r)$ for every $s\in Q'$. Since $|H_{I^c}(r)|\leq 2^{k}$, it follows that $r_4s\in X$ for at least $|Q'(r)|-2^kg_{11}(d,\d,k)|Q'|=|Q'(r)|-\frac{1}{2}h_2(d)|Q'|\geq (1-h_2(d))|Q'|$ choices $s\in Q'$.
	
	Let $X(r)=X\cap H_{I^c}(r)$. Let $t_1,\dots,t_{\a}$ be a maximal linearly independent subset of $X(r)$ and extend it to a basis $t_1,\dots,t_{\a},t'_1,\dots,t'_{\beta}$ for $H_{I^c}(r)$. Now if a linear combination of $t_1,\dots,t_{\a},t'_1,\dots,t'_{\beta}$ is in $X$, then the coefficients of $t'_1,\dots,t'_{\beta}$ are all zero. Write $r_4=\sum_{i\leq \a}s_i\otimes t_i+\sum_{j\leq \beta}s'_j\otimes t'_j$ for some $s_i,s'_j\in \F_2^{I}$. Since $r_4q\in X$ for at least $(1-h_2(d))|Q'|$ choices $q\in Q'$, we have, for all $j$, that $s'_j.q=0$ for at least $(1-h_2(d))|Q'|$ choices $q\in Q'$. Thus, as $Q'$ is $(h_1(d,\d,k),1-h_2(d))$-forcing, there exist subspaces $L_J\s \F_2^{J}$ ($J\s I,J\neq \emptyset$), not depending on $r$ and of dimension at most $h_1(d,\d,k)$, such that $s'_j\in \sum_{J\s I,J\neq \emptyset} L_J\otimes \F_2^{I\setminus J}$ for all $j$. Thus, $r_4\in \sum_{i\leq \a}s_i\otimes t_i+\sum_{J\s I,J\neq \emptyset} L_J\otimes \F_2^{J^c}$. Moreover, for every $i\leq \a$, we have $t_i\in X$, so there exist $z_i\in Z$ such that $t_i-z_i$ is $2g_{12}(d,\d,k)$-degenerate. It follows that $r_4\in \F_2^{I}\otimes Z+\sum_{J\supset I,J\neq I,J\subset \lbrack d-1\rbrack} \F_2^{J}\otimes K'_{J^c}(r)+\sum_{J\s I,J\neq \emptyset} L_J\otimes \F_2^{J^c}$ for some $K'_{J^c}(r)\s \F_2^{J^c}$ of dimension at most $\a\cdot 2g_{12}(d,\d,k)\leq 2^k\cdot 2g_{12}(d,\d,k)$. This finishes the proof of the lemma.
\end{proof}

We are now in a position to complete the proof of Lemma \ref{mainlemma}.

\begin{proof}[Proof of Lemma \ref{mainlemma}]
	As remarked earlier, it suffices to prove that (H) holds for all $d$. But (H) implies (H'), and (H) holds for $d=1$, therefore it suffices to prove, assuming that (H') holds for all $d'<d$, that (H) holds for $d$.
	
	Let $\cB'$ given as in the definition of (H). By Lemma \ref{focusondeg} (iv), there exist $Q_{\emptyset}\s 4\cB'$ and $V_{\lbrack d\rbrack}\s \F_2^{\lbrack d\rbrack}$ of dimension at most $g_1(\d)$ such that if $r.q=0$ for at least $\frac{3}{4}|Q_{\emptyset}|$ choices $q\in Q_{\emptyset}$, then $r$ can be written as $r=x+y$, where $x\in V_{\lbrack d\rbrack}$ and $y$ is $g_2(d,\d)$-degenerate. By repeated application of Lemma \ref{keylemma}, for all $I\s \lbrack d-1\rbrack$ with $I\neq \emptyset$, in the order given by $\prec$, we obtain for each such $I$ a multiset $Q_{I}$ with elements chosen from $g_3(d)\cB'$, subspaces $V_I\s \F_2^{I}$, $V_{I^c}\s \F_2^{I^c}$ of dimension at most $g_4(d,\d)$, and we also obtain a constant $g_5(d)>0$ such that the following statement holds. If $r\in \cG$ is such that for each $I\subset[d-1]$ we have that $r.q=0$ for at least $(1-g_5(d))|Q_I|$ choices of $q\in Q_I$, then $r\in V_{\lbrack d\rbrack}+\sum_{I\s \lbrack d-1\rbrack,I\neq \emptyset} (V_I\otimes \F_2^{I^c}+\F_2^{I}\otimes V_{I^c})$. Moreover, after taking several copies of each $Q_I$, we may assume that $\max_I |Q_I|\leq 2\min_{I} |Q_I|$. Now let $g_6(d)=\frac{g_5(d)}{2\cdot 2^{d-1}}$, let $Q=\bigcup_{I}Q_I$, and suppose that $r.q=0$ for at least $(1-g_6(d))|Q|$ choices $q\in Q$. Then, by averaging, for every $I\s \lbrack d-1\rbrack$ we have $r.q=0$ for at least $(1-g_5(d))|Q_I|$ choices of $q\in Q_I$. Thus, $Q$ is $(g_7(d,\d),1-g_6(d))$-forcing, where $g_7(d,\d)=\max(g_1(\d),g_4(d,\d))$.
\end{proof}

\section{The counterexample to Question \ref{mainq}}

We shall now present an example that gives a negative answer to Question \ref{mainq}. The example is easy to define, but it takes a little work to prove that it has the properties we require. In what follows, let $G=\F_2^n$. For a vector $v\in G$ write $|v|$ for the number of entries equal to 1 in $v$. Then our set $A$ will be $\{v\in \F_2^n:|v|\leq n/2-10^{20}n^{3/4}\}$, and our set $B$ will be $\{v\in \F_2^n:|v|=n^{1/2}\}$. 

%Before we give the proof, here is a rough reason that one might expect an example like this to be suitable. For each $x\in A$, let $B(x)=\{b\in B:x+b\in A\}=B\cap(A-x)$. A typical $x\in A$ has size within around $n^{1/4}$ of the maximum $n/2-10^{20}n^{3/4}$. To see this, note that (if $n$ is even)
%\[\binom n{n/2-k-1}\binom n{n/2-k}^{-1}=\frac{n/2-k}{n/2+k+1}\approx 1-4k/n,\]
%so one has to drop down around $n/k$ layers before the layers start to become substantially smaller. Let us pick such an $x$, and for convenience suppose that it is in fact the vector $(1,1,\dots,1,0,0,\dots,0)$, where there are $m$ 1s, and $m$ is between $n/2-10^{20}n^{3/4}-Cn^{1/4}$ and $n/2-10^{20}n^{3/4}$. If we add a random vector $y\in B$, then $x+y$ will belong to $A$ as long as the number of $i>m$ such that $y_i=1$ is no more than $Cn^{1/4}$ plus the number of $i\leq m$ such that $y_i=1$. Since the standard deviation of this difference is of order $n^{1/4}$ and the distribution is roughly normal, this happens with probability bounded away from zero.

%To put this loosely, $B(x)$ consists of those $y$ that are slightly biased in the sense that their supports correlate positively with that of $x$.

Note first that $A$ is $\eta$-closed with respect to $B$ where $\eta>0$ is some absolute constant. Indeed, by the central limit theorem, when $n$ sufficiently large, the probability that a random element $u\in A$ has $|u|\leq n/2-10^{20}n^{3/4}-n^{1/4}$ is at least some absolute constant $\eta_1$, and conditional on this, the probability that $|u+v|\in A$ for a random element $v\in B$ is at least some other absolute constant $\eta_2$, so we may take $\eta=\eta_1 \eta_2$. What we shall prove is that for this $\eta$, with $\e=0.99$, say, there do not exist $c,\d$ and $l$ with the properties described in Question \ref{mainq}. In fact, we shall prove the slightly stronger statement that for any $\delta>0$ and positive integer $l$, if $n$ is sufficiently large then there do not exist $C\subset A+lB$ and $B'\subset B$ with $|B'|\geq \d |B|$ such that $C$ is $(B',0.99)$-closed. Since for sufficiently large $n$, we have $A+lB\subset A'=\{v\in \F_2^n:|v|\leq n/2-10^{15}n^{3/4}\}$, it suffices to prove the same statement but with $A+lB$ replaced by $A'$. From now on, we always assume that $n$ is sufficiently large.

The proof relies on two lemmas and a definition.

\begin{lemma} \label{fourier}
	If $B'\subset B$ has $|B'|\geq \delta |B|$, then $\widehat{\mu_{B'}}(u)\geq 0.98$ for at most $\exp(n^{2/3})$ vectors $u\in \F_2^n$.
\end{lemma}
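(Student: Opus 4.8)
The plan is to set $S=\{u\in\F_2^n:\widehat{\mu_{B'}}(u)\geq 0.98\}$ and bound $|S|$. Since $\widehat{\mu_{B'}}(u)=\E_{v\in B'}(-1)^{u\cdot v}=1-2\,\P_{v\in B'}[u\cdot v=1]$, being in $S$ is the same as $\P_{v\in B'}[u\cdot v=1]\leq 0.01$. Fix a threshold $\tau=\lceil 2\sqrt n\log n\rceil$ and call $u$ \emph{light} if $\min(|u|,n-|u|)<\tau$ and \emph{heavy} otherwise. The number of light vectors is at most $2\tau\binom{n}{\tau}\leq 2^{O(\sqrt n(\log n)^2)}$, which is far below $\exp(n^{2/3})$, so it suffices to bound the number of heavy vectors in $S$. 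Note already that a bare Parseval bound is useless here: $\sum_u\widehat{\mu_{B'}}(u)^2=|G|/|B'|\leq |G|/(\delta|B|)$ only gives $|S|\leq 2^{\,n-\widetilde\Theta(\sqrt n)}$, which vastly exceeds $\exp(n^{2/3})$. One must exploit the sphere structure.

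The first ingredient is a standard estimate for the Fourier transform of the sphere: for every heavy $u$ one has $|\widehat{\mu_B}(u)|=O(n^{-3/4})$, in particular $|\widehat{\mu_B}(u)|\leq\delta/100$ once $n$ is large in terms of $\delta$. I would prove this by writing $\mu_B$ as the law of the product measure $\mathrm{Ber}(1/\sqrt n)^{\otimes n}$ conditioned on having weight exactly $\sqrt n$, using $\b1(\text{weight}=\sqrt n)=\tfrac{1}{2\pi}\int_0^{2\pi}e^{i\theta(\text{weight}-\sqrt n)}\,d\theta$ to unfold the conditioning, so that the relevant expectation becomes $\tfrac{1}{2\pi}\int_0^{2\pi}e^{-i\theta\sqrt n}(1-p-pe^{i\theta})^{|u|}(1-p+pe^{i\theta})^{n-|u|}\,d\theta$ with $p=1/\sqrt n$; both factors have modulus $\leq 1$, the first is $\leq(1-2p)^{|u|}$ near $\theta=0$ and the second $\leq(1-2p)^{n-|u|}$ near $\theta=\pi$, and away from $\{0,\pi\}$ one picks up an extra exponentially small factor, so the integral is $O(n^{-1})$ while the normalising constant $\P[\text{weight}=\sqrt n]$ is $\Theta(n^{-1/4})$. (Any standard Krawtchouk estimate giving $|\widehat{\mu_B}(u)|$ small in the range $\sqrt n\,\mathrm{polylog}(n)\leq|u|\leq n-\sqrt n\,\mathrm{polylog}(n)$ would serve equally well.)

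The heart of the argument is a Bessel-type inequality. Put $h=\tfrac{|B|}{|B'|}\b1_{B'}$, regarded as a function on $B$; then $\E_{v\in B}h=1$ and $\|h\|_{L^2(B)}^2=|B|/|B'|\leq 1/\delta$, so its mean-zero part $h^0=h-1$ satisfies $\|h^0\|_{L^2(B)}^2\leq 1/\delta$. Writing $\chi_u(v)=(-1)^{u\cdot v}$, a direct computation gives $\langle h^0,\chi_u\rangle_{L^2(B)}=\widehat{\mu_{B'}}(u)-\widehat{\mu_B}(u)$, so for a heavy $u\in S$ we get $|\langle h^0,\chi_u\rangle_{L^2(B)}|\geq 0.98-\delta/100>0.9$. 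Now greedily choose heavy vectors $u^{(1)},u^{(2)},\dots\in S$ so that every heavy $u\in S$ satisfies $\min(|u+u^{(j)}|,n-|u+u^{(j)}|)<\tau$ for some $j$, while $u^{(j)}+u^{(j')}$ is heavy for all $j\neq j'$; then $\langle\chi_{u^{(j)}},\chi_{u^{(j')}}\rangle_{L^2(B)}=\widehat{\mu_B}(u^{(j)}+u^{(j')})$ has modulus $\leq\delta/100$, so $\chi_{u^{(1)}},\chi_{u^{(2)}},\dots$ are unit vectors in $L^2(B)$ forming a $(\delta/100)$-almost orthonormal system. The standard consequence $\sum_{j\leq M}|\langle h^0,\chi_{u^{(j)}}\rangle|^2\leq\|h^0\|^2\bigl(1+(M-1)\tfrac{\delta}{100}\bigr)$, together with $|\langle h^0,\chi_{u^{(j)}}\rangle|>0.9$ for each $j$, forces $0.81M\leq\tfrac1\delta+\tfrac{M}{100}$, hence $M=O(1/\delta)$.

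To conclude: by construction every heavy $u\in S$ lies within Hamming distance $\tau$ of one of the at most $2M=O(1/\delta)$ points $u^{(j)}$ or $u^{(j)}+\mathbf 1$, so the number of heavy vectors in $S$ is at most $O(\tfrac1\delta)\binom{n}{\leq\tau}=2^{O(\sqrt n(\log n)^2)}$; adding the light vectors gives $|S|\leq 2^{O(\sqrt n(\log n)^2)}\leq\exp(n^{2/3})$ for $n$ large, as required. I expect the only step needing genuine care to be the Fourier estimate of the second paragraph (and, more minorly, checking that ``$u+u'$ light'' behaves transitively enough, via the triangle inequality with a constant blow‑up of $\tau$, so the greedy clustering goes through); the conceptual point is simply that one replaces Parseval by the observation that $\chi_u$ and $\chi_{u'}$ are \emph{almost orthogonal on $B$} whenever $u+u'$ is heavy.
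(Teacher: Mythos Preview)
Your proof is correct and takes a genuinely different route from the paper's. The paper argues combinatorially: it first proves (via a somewhat delicate basis-selection lemma) that if $V\leq\F_2^n$ has dimension $t$ and every nonzero $v\in V$ has weight at least $n^{8/15}$, then at most a $(1.9)^{-(t-1)}$ fraction of $w\in B$ are orthogonal to all of $V$; it then shows that if the set $Q=\{u:\widehat{\mu_{B'}}(u)\geq 0.98\}$ had size exceeding $\exp(n^{2/3})$ one could extract $u_1,\dots,u_r\in Q$ with each $u_j$ at Hamming distance $\geq n^{8/15}$ from $\mathrm{span}(u_1,\dots,u_{j-1})$, and derives a contradiction by a pigeonhole over $t$-subsets of $\{u_1,\dots,u_r\}$.

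Your argument replaces all of this by a single functional-analytic observation: the characters $\chi_u$ restricted to the sphere $B$ form an \emph{almost-orthonormal} system in $L^2(B)$ whenever the pairwise sums $u+u'$ are not light, because $\langle\chi_u,\chi_{u'}\rangle_{L^2(B)}=\widehat{\mu_B}(u+u')$ and the sphere's Fourier transform is tiny away from $0$ and $\mathbf 1$. Bessel's inequality for almost-orthonormal systems then bounds the number of ``well-separated'' large coefficients by $O(1/\delta)$, and a Hamming-ball count finishes. This is cleaner and more conceptual than the paper's approach; it also gives a sharper implicit bound of $\exp\bigl(O(\sqrt n\,(\log n)^2)\bigr)$ rather than $\exp\bigl(\tilde O(n^{8/15})\bigr)$. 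The price you pay is that you invoke a Krawtchouk-type estimate for $\widehat{\mu_B}$ as a black box, whereas the paper's basis lemma proves the decorrelation it needs from scratch. Your worry about ``transitivity'' is unnecessary: the greedy packing (add $u^{(j+1)}$ only if $u^{(j+1)}+u^{(i)}$ is heavy for all $i\leq j$; stop when impossible) directly yields both properties you want, with no blow-up of $\tau$ required.
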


\begin{definition}
	Given $B'\subset B$, we say $u\in A'$ is \emph{$B'$-compatible} if the number of $w\in B'$ with $|u+w|\leq |u|$ is at least $|B'|/3$.
\end{definition}

\begin{lemma} \label{compatible}
	Let $B'\subset B$ have $|B'|\geq\d|B|$. Then the number of those $u\in A'$ which are $B'$-compatible is at most $\exp(-n^{3/4})|A'|$.
\end{lemma}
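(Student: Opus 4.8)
The plan is to reduce the statement to a large‑deviation estimate for the overlap of a random low‑weight vector with a small family of elements of $B$, after using the density of $B'$ and a greedy peeling argument to produce such a family. Throughout, write $u\cap w$ for the set of coordinates at which both $u$ and $w$ equal $1$; since $|u+w|=|u|+|w|-2|u\cap w|$, the condition $|u+w|\le|u|$ means exactly $|u\cap w|\ge\tfrac12 n^{1/2}$, so $u$ is $B'$-compatible precisely when $|u\cap w|\ge\tfrac12 n^{1/2}$ for at least a third of the $w\in B'$.

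First I would record the basic tail bound. Fix $w_1,\dots,w_r\in B$ and set $\mathrm{mult}(j)=\#\{i:j\in\mathrm{supp}(w_i)\}$, so $\sum_j\mathrm{mult}(j)=rn^{1/2}$. If $u$ is uniform in $A'$ then, after conditioning on $|u|=k\le n/2-10^{15}n^{3/4}$, the quantity $Z=\sum_{i\le r}|u\cap w_i|=\sum_{j\in\mathrm{supp}(u)}\mathrm{mult}(j)$ is a sum of negatively associated indicators with mean $\tfrac{k}{n}\,rn^{1/2}\le\tfrac12 rn^{1/2}-10^{15}rn^{1/4}$, whereas the overlap conditions force $Z\ge\tfrac12 rn^{1/2}$. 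A Chernoff/Bernstein estimate then gives
\[
\#\bigl\{u\in A':\ |u\cap w_i|\ge\tfrac12 n^{1/2}\ \text{ for all }i\le r\bigr\}\ \le\ \exp\!\Bigl(-\Omega\bigl(r^{2}n^{1/2}\big/\textstyle\sum_j\mathrm{mult}(j)^{2}\bigr)\Bigr)\,|A'|,
\]
which reads $\exp(-\Omega(r))\,|A'|$ when the $w_i$ are pairwise disjoint (so that $\sum_j\mathrm{mult}(j)^{2}=rn^{1/2}$).

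Now let $\mathcal U\subseteq A'$ be the set of $B'$-compatible vectors and peel off witnesses greedily. Since every $u\in\mathcal U$ satisfies $|u\cap w|\ge\tfrac12 n^{1/2}$ for at least $|B'|/3$ of the $w\in B'$, at each step there is a not‑yet‑chosen $w_i\in B'$ with $|u\cap w_i|\ge\tfrac12 n^{1/2}$ for a constant fraction of the $u$ still under consideration (deleting the $O(r)$ previously chosen elements from $B'$, which has density $\ge\delta$, changes these proportions negligibly). The key extra requirement is that $w_i$ can be chosen with small overlap with $w_1\cup\dots\cup w_{i-1}$: this is where the hypothesis $|B'|\ge\delta|B|$ does the real work, since a set of $t$ coordinates is contained in the support of only about an $n^{-t/2}$ proportion of the elements of $B$, so $B'$ cannot be concentrated on a small coordinate set and enough good witnesses avoid the union built so far; the Fourier input (Lemma \ref{fourier}), which limits how much $\mu_{B'}$ can avoid a coordinate subcube, can be used to make this quantitative if necessary. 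Extracting in this way $w_1,\dots,w_r$ that are essentially pairwise disjoint, together with $\mathcal U_r\subseteq\mathcal U$ of size $\ge C^{-r}|\mathcal U|$ for an absolute constant $C$ on which all $r$ overlap conditions hold, the tail bound gives $|\mathcal U|\le C^{r}\exp(-\Omega(r))\,|A'|$; taking $r$ a suitable power of $n$ (as large as the near‑disjointness permits) yields $|\mathcal U|\le\exp(-n^{3/4})\,|A'|$. The main obstacle throughout is precisely controlling the overlap structure of the extracted witnesses — i.e.\ making the ``$B'$ is unconcentrated'' step quantitative enough that the exponent stays linear in $r$ for $r$ a genuine power of $n$; note also that if $\delta$ exceeds an absolute constant then $\mathcal U=\emptyset$, because no $u\in A'$ can then have $|u\cap w|\ge\tfrac12 n^{1/2}$ for even a $\delta$-fraction of $B$, so only small $\delta$ requires attention.
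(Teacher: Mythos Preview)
Your approach has a hard quantitative ceiling well below what the lemma requires. The Bernstein/Chernoff tail you write down is $\exp\bigl(-\Omega(r^{2}n^{1/2}/\sum_j\mathrm{mult}(j)^{2})\bigr)$, and by Cauchy--Schwarz $\sum_j\mathrm{mult}(j)^{2}\ge (\sum_j\mathrm{mult}(j))^{2}/n=r^{2}$, so this exponent is at most $O(n^{1/2})$ for \emph{every} choice of $w_1,\dots,w_r$ and every $r$. In the ideal pairwise-disjoint case $\sum_j\mathrm{mult}(j)^{2}=rn^{1/2}$ and the exponent is $\Theta(r)$, but disjointness forces $r\le n^{1/2}$, giving the same wall. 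This is not merely an artefact of the inequality: with $r\sim n^{1/2}/2$ disjoint witnesses the intersection event genuinely has probability $\exp(-\Theta(n^{1/2}))$, since each individual constraint $|u\cap w_i|\ge \tfrac12 n^{1/2}$ has probability $\exp(-\Theta(1))$ and the constraints, being supported on disjoint coordinate sets, are essentially independent. After the $C^{r}$ loss from the peeling you therefore cannot do better than $|\mathcal U|\le\exp(-O(n^{1/2}))\,|A'|$, which falls short of $\exp(-n^{3/4})$ and would not contradict the lower bound $\gamma\ge 0.01\exp(-n^{2/3})$ coming from Lemma~\ref{fourier} in the surrounding argument.

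Separately, the near-disjointness step is flagged as the main obstacle rather than carried out: at each of $r$ stages with $r$ a power of $n$ you must produce $w_i\in B'$ that is simultaneously good for a fixed fraction of the surviving set \emph{and} essentially disjoint from $w_1\cup\cdots\cup w_{i-1}$, and neither the density hypothesis nor Lemma~\ref{fourier} (which controls the large spectrum of $\mu_{B'}$, not support overlaps of elements of $B'$) supplies this. But even granting it, the $n^{1/2}$ barrier above stands.

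The paper sidesteps both problems by reversing the roles of $u$ and $w$. It fixes a \emph{constant} $t=t(\delta)$, draws $u_1,\dots,u_{6t}$ uniformly from $A'$, and shows: (i) for any $t$-tuple $T$ whose centred sum $s_T=\sum_i u_i-(t/2)\mathbf 1$ has $\|s_T\|_2^{2}<1000\,tn$, a single Chernoff bound over $w\in B$ gives $\#\{w\in B:\ |u_i+w|\le|u_i|\ \text{for all }i\}\le 100^{-t}|B|$; and (ii) a random $t$-tuple from $A'$ fails this $\ell_2$ condition with probability $o(\exp(-n^{7/8}))$. If the compatible set had density $\alpha$, then with probability $\alpha^{6t}$ all $6t$ draws would be compatible, a pigeonhole over $t$-subsets would produce one with at least $\tfrac{\delta}{100(6e)^t}|B|>100^{-t}|B|$ common witnesses in $B$, and hence that $t$-subset would have to be ``bad''; comparing with (ii) forces $\alpha^{6t}=o(\exp(-n^{7/8}))$, i.e.\ $\alpha=o(\exp(-n^{3/4}))$. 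The key point is that the $n^{3/4}$ comes from the concentration of $\|s_T\|_2$ over the randomness in $A'$, not from taking $t$ (or your $r$) to grow with $n$.
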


\smallskip

\noindent Let us see why these two lemmas are sufficient. Suppose that $C\subset A'$ is $(B',0.99)$-closed for some $B'\subset B$ with $|B'|\geq \d|B|$. Let $w\in B'$ be chosen at random. Then the expected number of $u\in C$ such that $u+w\in C$ is at least $0.99|C|$, so by considering all such pairs $\{u,u+w\}$ and noting that $(u+w)+w=u$, we see that there are on average at least $\frac{0.99}2|C|$ choices of $u\in C$ such that $|u+w|\leq|u|$. Therefore, if $u\in C$ is chosen at random, the average number of $w\in B'$ such that $|u+w|\leq|u|$ is at least $\frac{0.99}2|B'|$. It follows that for at least $|C|/10$ elements of $C$ the number of such $w$ is at least $|B'|/3$, so at least $|C|/10$ elements of $A'$ are $B'$-compatible. Lemma \ref{compatible} then implies that $C$ has density at most $10\exp(-n^{3/4})$ in $G$. Let us write $\g$ for this density.

On the other hand, since $C$ is $(B',0.99)$-closed, we have the inequality
\[\sum_{u\in G} \widehat{\mu_{B'}}(u)|\hC(u)|^2\geq 0.99\sum_{u\in G}|\hC(u)|^2,\]
which implies that 
\[\sum_{u\in G: \widehat{\mu_{B'}}(u)\geq 0.98} \widehat{\mu_{B'}}(u)|\hC(u)|^2\geq 0.01\sum_{u\in G}|\hC(u)|^2.\]
Using Lemma \ref{fourier}, together with the observations that $\widehat{\mu_{B'}}(u)\leq 1$ and $|\hC(u)|\leq \g$ for every $u\in G$ and that $\sum_{u\in G}|\hC(u)|^2=\g$, we deduce that $\exp(n^{2/3})\g^2\geq 0.01 \g$, so $\g\geq 0.01\exp(-n^{2/3})$. For sufficiently large $n$, this contradicts the upper bound for $\g$ that we obtained a few lines above. 
\medskip

It remains to prove the two lemmas. The next two results are preparation for the proof of Lemma \ref{fourier}.

\begin{lemma}\label{Claim2} Let $V$ be a subspace of $\mathbb{F}_2^n$ of dimension $d$ such that every $v\in V$ has $|v|\geq n^{8/15}$. Then $V$ has a basis $\{v_1,\dots,v_d\}$ such that for every $i$, the set $I_i=\{k\leq n: v_i(k)=1,v_j(k)=0 \text{ for all } j\neq i\}$ has size at least $n^{8/15}/2^{d-1}$. Here and below, the $k$th entry of a vector $v$ is denoted by $v(k)$.
\end{lemma}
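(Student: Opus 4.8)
The plan is to reformulate the statement in terms of the columns of the matrix whose rows form a basis of $V$, and then reduce to a one-line pigeonhole argument. First I would fix an arbitrary basis $w_1,\dots,w_d$ of $V$ and, for each coordinate $k\le n$, record the column vector $c_k=(w_1(k),\dots,w_d(k))\in\F_2^d$; for $c\in\F_2^d$ set $N(c)=\{k\le n:c_k=c\}$, so the sets $N(c)$ partition $[n]$. The point of this reformulation is twofold. On the one hand, a change of basis is nothing but an invertible linear substitution applied to all columns at once: if $P$ is an invertible $d\times d$ matrix over $\F_2$ and $w_i'=\sum_m P_{im}w_m$, then the column of the new matrix at $k$ is $Pc_k$. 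On the other hand, for any nonzero $a\in\F_2^d$ the element $\sum_{i:a_i=1}w_i\in V$ takes the value $\langle a,c_k\rangle$ at coordinate $k$; since it is nonzero, the hypothesis on $V$ gives
\[\sum_{c\in\F_2^d:\,\langle a,c\rangle=1}|N(c)|\ \ge\ n^{8/15}\qquad\text{for every }a\in\F_2^d\setminus\{0\}.\]

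Next I would prove the key claim: the set $T=\{c\in\F_2^d\setminus\{0\}:|N(c)|\ge n^{8/15}/2^{d-1}\}$ spans $\F_2^d$. If it did not, then $T$ would lie in some linear hyperplane $\{c:\langle a,c\rangle=0\}$ with $a\ne0$, so every $c$ with $\langle a,c\rangle=1$ would have $|N(c)|<n^{8/15}/2^{d-1}$; but the set of such $c$ is a coset of $\ker a$, hence has exactly $2^{d-1}$ elements, and summing their $|N(c)|$ would contradict the displayed inequality. Granted the claim, choose linearly independent $b_1,\dots,b_d\in T$, let $P$ be the unique invertible matrix with $Pb_i=e_i$ for all $i$, and set $v_i=\sum_m P_{im}w_m$. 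Then $\{v_1,\dots,v_d\}$ is a basis of $V$, and for every $k\in N(b_i)$ the new column at $k$ is $Pb_i=e_i$, i.e.\ $v_i(k)=1$ and $v_j(k)=0$ for all $j\ne i$; hence $I_i\supseteq N(b_i)$ and $|I_i|\ge|N(b_i)|\ge n^{8/15}/2^{d-1}$.

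I do not expect a genuine obstacle here: the content is entirely in the column reformulation, after which the argument is the pigeonhole observation that the $2^{d-1}$ column-types contributing to a given nonzero element of $V$ cannot all be light, dualised to say the heavy column-types are not contained in any linear hyperplane. The only points needing care are the bookkeeping in the change-of-basis step — verifying that left-multiplication of the column vectors by $P$ corresponds precisely to the substitution $v_i=\sum_m P_{im}w_m$ — and keeping the inequality defining $T$ strict enough that the pigeonhole yields a contradiction rather than an equality; both are routine.
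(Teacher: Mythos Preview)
Your argument is correct and is genuinely different from the paper's. The paper proceeds by induction on $d$: given a good basis $v_1,\dots,v_d$ of a $d$-dimensional subspace with private index sets $I_i$, it picks any $v$ outside and first replaces $v$ by $v-v_i$ as needed so that $v$ vanishes on at least half of each $I_i$, and then replaces each $v_i$ by $v_i-v$ as needed so that all the $v_i$ simultaneously vanish on a $2^{-d}$ fraction of the support of $v$. Each of these two rounds of adjustments halves the lower bound, which is where the factor $2^{d-1}$ comes from.

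Your approach short-circuits the induction entirely. By recording the column-type map $k\mapsto c_k\in\F_2^d$, the weight hypothesis becomes the statement that every affine hyperplane $\{c:\langle a,c\rangle=1\}$ carries total mass $\ge n^{8/15}$ under $|N(\cdot)|$; a single pigeonhole then forces the set of heavy nonzero column-types to escape every linear hyperplane, hence to span $\F_2^d$, and one change of basis finishes. This is cleaner and more conceptual: it isolates the combinatorial content as a duality between ``heavy column-types span'' and ``no element of $V$ is light''. The paper's iterative proof, by contrast, is more explicitly constructive and shows how the basis can be built up one vector at a time, which may be preferable if one later wants finer control over the intermediate steps; but for the lemma as stated your route is at least as good.
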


\begin{proof} We use induction on $d$. The case $d=1$ easily follows from the assumption on $V$. Let $V'$ have dimension $d+1$ and suppose that for a $d$-dimensional subspace $V\subset V'$, $v_1,\dots,v_d$ and $I_1,\dots,I_d$ have been chosen satisfying the requirements. Choose some $v\in V'\setminus V$. Replacing $v$ by $v-v_1$ if necessary, we may assume that $v(k)=0$ for at least $|I_1|/2$ choices $k\in I_1$. Similarly, we may assume that $v(k)=0$ for at least $|I_i|/2$ choices $k\in I_i$ for every $i\leq d$. Thus, there exist subsets $J_1,\dots,J_d$ of $\{1,\dots,n\}$ of size at least $n^{8/15}/2^d$ each such that for every $i\leq d$ and every $k\in J_i$ we have $v_i(k)=1$ but  $v_j(k)=0$ for all $j$ with $j\neq i,j\leq d$, and $v(k)=0$. Let $J=\{k\leq n: v(k)=1\}$. By the assumption on $V'$, we have $|J|\geq n^{8/15}$. Now it is easy to see that we can define $v_1'$ to be $v_1$ or $v_1-v$ and achieve that $v_1'(k)=0$ for at least $|J|/2$ choices of $k\in J$. Similarly, we can define $v_2',\dots,v_d'$ such that each $v_i'$ is $v_i$ or $v_i-v$ and $v_1'(k)=\dots=v_d'(k)=0$ for at least $|J|/2^d$ choices $k\in J$. Then for any $i,j\leq d$, we have $v_i'(k)=v_i(k)$ for every $k\in J_j$, and it follows that for any $i\leq d$ and $k\in J_i$, we have $v_i'(k)=1$ but $v_j'(k)=0$ for all $j\neq i$, and $v(k)=0$. Thus, the set $\{v_1',\dots,v_d',v\}$ is suitable so the lemma is proved.
\end{proof}

%\textbf{+++You asked whether the assertion below, which used to be Claim 1, required further justification. I think it did, so have provided the following.+++}

\begin{corollary}\label{Claim1} Let $t$ be a positive integer not depending on $n$ and let $V$ be a subspace of $\mathbb{F}_2^n$ of dimension $t$ such that every $v\in V$ has $|v|\geq n^{8/15}$. Then the density of those $w\in B$ with $w\cdot v=0$ for all $v\in V$ is less than $(1.9)^{-(t-1)}$.
\end{corollary}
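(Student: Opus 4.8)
The plan is to use Lemma~\ref{Claim2} to pass to a convenient basis of $V$, turn the condition ``$w\cdot v=0$ for all $v\in V$'' into a statement about the parities of the numbers $|\operatorname{supp}(w)\cap I_i|$, and then run a short Fourier (root-of-unity) computation on $\F_2^{t}$; the only genuinely analytic input will be a bound on a hypergeometric character sum. Throughout write $m=n^{1/2}$ and identify a vector of $\F_2^n$ with its support. \emph{Step 1 (reduction to parities).} Apply Lemma~\ref{Claim2} with $d=t=\dim V$ to obtain a basis $v_1,\dots,v_t$ of $V$ and pairwise disjoint sets $I_1,\dots,I_t\subseteq[n]$ with $I_i\subseteq\operatorname{supp}(v_i)$, $v_j(k)=0$ whenever $k\in I_i$ and $j\ne i$, and $|I_i|\ge s:=n^{8/15}/2^{t-1}$. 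These properties give $\operatorname{supp}(v_i)\cap\bigcup_jI_j=I_i$, so for any $w$ we have $w\cdot v_i\equiv|w\cap I_i|+c_i\pmod 2$, where $c_i:=|w\cap(\operatorname{supp}(v_i)\setminus I_i)|\bmod 2$ depends only on the restriction of $w$ to $J:=[n]\setminus\bigcup_jI_j$. Hence $w\in B$ is orthogonal to $V$ iff $|w\cap I_i|\equiv c_i\pmod 2$ for every $i$.

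\emph{Step 2 (conditioning and the root-of-unity filter).} Fix $w|_J$; this determines $L:=m-|w\cap J|=|w\cap\bigcup_jI_j|$ and the vector $c=(c_1,\dots,c_t)$, and then $w\cap\bigcup_jI_j$ is a uniformly random $L$-subset of $\bigsqcup_jI_j$, a set of size $N_0:=\sum_j|I_j|\le n$. For $S\subseteq[t]$ set $Z_S:=|w\cap\bigcup_{i\in S}I_i|$; conditionally on $w|_J$ it is hypergeometric with parameters $(N_0,N_S,L)$, $N_S:=\sum_{i\in S}|I_i|$. The root-of-unity filter on $\F_2^t$ gives
\[
\P\bigl[\,|w\cap I_i|\equiv c_i\ \forall i\ \big|\ w|_J\,\bigr]=2^{-t}\sum_{S\subseteq[t]}(-1)^{\sum_{i\in S}c_i}\,\E\bigl[(-1)^{Z_S}\bigr].
\]
The term $S=\emptyset$ equals $1$, the term $S=[t]$ equals $(-1)^{L+\sum_ic_i}$, and for each proper non-empty $S$ one has $\min(N_S,N_0-N_S)\ge s$. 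Hence this conditional probability is at most $2^{-(t-1)}+\max_{\emptyset\ne S\subsetneq[t]}|\E[(-1)^{Z_S}]|$.

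\emph{Step 3 (the character sum, and averaging over $w|_J$).} The key estimate is: if $Z$ is hypergeometric with parameters $(N,K,L)$ and $L\le N/2$, then, writing $\mu:=\min\bigl(LK/N,\,L(N-K)/N\bigr)$, one has $|\E[(-1)^Z]|\le P(L)\,e^{-c\mu}$ for an absolute $c>0$ and a fixed polynomial $P$. This follows from $\E[(-1)^Z]=\binom NL^{-1}[x^L](1-x)^{K}(1+x)^{N-K}$: estimate the coefficient by a contour integral over $|x|=\rho$ at the saddle value of $\rho$ and divide by the Stirling lower bound for $\binom NL$, the cancellation in $(1-x)^K(1+x)^{N-K}=(1-x^2)^{\min(K,N-K)}(1\pm x)^{|N-2K|}$ producing the factor $e^{-c\mu}$. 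Now $L=|w\cap\bigcup_jI_j|$ is itself hypergeometric with parameters $(n,N_0,m)$, so $\E L=mN_0/n\ge ms/n=n^{1/30}/2^{t-1}$, and since $\mathrm{Var}(L)\le\E L$, Chebyshev gives $\P[L<\tfrac12\E L]\le 4/\E L\to0$. If $L\ge\tfrac12\E L$, then for every proper non-empty $S$ (and with $L\le m\le N_0/2$ once $n$ is large)
\[
\min\!\Bigl(\tfrac{L N_S}{N_0},\ \tfrac{L(N_0-N_S)}{N_0}\Bigr)\ \ge\ \tfrac12\E L\cdot\tfrac{s}{N_0}\ =\ \tfrac{ms}{2n}\ =\ \tfrac{n^{1/30}}{2^{t}},
\]
so the estimate above yields $\max_S|\E[(-1)^{Z_S}]|\le\psi(n)$ with $\psi(n)\to0$. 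Averaging Step~2 over $w|_J$, and using the trivial bound $1$ on the rare event $L<\tfrac12\E L$,
\[
\P_{w\in B}[w\perp V]\ \le\ 2^{-(t-1)}+\psi(n)+\P\bigl[L<\tfrac12\E L\bigr]\ =\ 2^{-(t-1)}+o(1).
\]
Since $1.9<2$, for every fixed $t\ge 2$ we have $2^{-(t-1)}<1.9^{-(t-1)}$ with a positive gap, so for $n$ large (in terms of the constant $t$) the right side is below $1.9^{-(t-1)}$, as required. (For $t=1$ the claim only asserts that not every $w\in B$ is orthogonal to $v_1$, which is immediate unless $v_1$ is the all-ones vector and $m$ is even, a degenerate case one excludes by taking $n^{1/2}$ odd.)

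\emph{Main obstacle.} Everything except the character-sum bound of Step~3 is elementary — the reduction via Lemma~\ref{Claim2}, the root-of-unity filter, and the Chebyshev tail for $L$. The inequality $|\E[(-1)^Z]|\le P(L)e^{-c\mu}$ is a routine saddle-point/local-central-limit computation — equivalently a bound on the Fourier coefficient $\widehat{\mu_B}(v)$ for $v$ of weight bounded away from $0$ and $n$ — but it must be done with some care: the optimal contour radius depends on $\min(K,N-K)/N$, and a crude triangle-inequality bound discards precisely the cancellation one needs.
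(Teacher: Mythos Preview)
Your argument is correct and reaches the same conclusion as the paper, but the organisation is genuinely different. The paper proceeds sequentially: for each $i<t$ it bounds $\P[w\cdot v_i=0\mid w\cdot v_j=0\ \forall j<i]$ by conditioning on the pair $I_i\cup I_{i+1}$ (which is disjoint from $\operatorname{supp}(v_j)$ for $j<i$), noting that the number of ones of $w$ in $I_i\cup I_{i+1}$ is with high probability of order $n^{1/30}$, and then using that the parity of $|w\cap I_i|$ is almost uniform once that count is large. Multiplying the $t-1$ conditional bounds gives $(1.9)^{-(t-1)}$. You instead condition once on $w|_J$ and expand over $\F_2^t$ with the root-of-unity filter, reducing everything to the single estimate $|\E[(-1)^{Z_S}]|=o(1)$ for proper non-empty $S$; this yields the sharper $2^{-(t-1)}+o(1)$. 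The underlying analytic input is identical in both proofs---the near-uniformity of the parity of a hypergeometric variable when $L\cdot\min(K,N-K)/N\to\infty$---and both treatments are equally informal about it (the paper calls it ``standard''; you sketch a saddle-point argument). Your version has the advantage of isolating that estimate cleanly in one place rather than invoking it inside a chain of conditional probabilities, and of avoiding the mild awkwardness of conditioning on events of probability bounded below only by $2^{-(t-1)}$.

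One small point: your handling of $t=1$ is not quite right---you cannot choose $n^{1/2}$ to be odd, and the all-ones vector with even $n^{1/2}$ genuinely gives density $1$. However, the statement is vacuously fine for $t=1$ except in that single degenerate case, and in the paper's application (the proof of Lemma~\ref{fourier}) one always has $t\ge 2$, so this does not affect anything.
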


\begin{proof}
We shall be slightly sketchy about the some of the details when they are very standard. As always, we assume that $n$ is sufficiently large. Let $v_1,\dots,v_t$ be a basis given by Lemma \ref{Claim2} with $d=t$. Let $w$ be a random vector in $B$, let $i<t$, and let us consider the probability that $w.v_i=0$ given that $w.v_j=0$ for every $j<i$. 

The expected number of non-zero coordinates of $w$ in the union of the two intervals $I_i$ and $I_{i+1}$ is at least $n^{1/30}/2^{t-1}$, which tends to infinity, and the probability that it is at least half this number tends to 1 (very rapidly). If we condition further on this number, and if it is indeed at least $n^{1/30}/2^t$, then the probability that the number of non-zero coordinates of $w$ in $I_i$ is even is almost exactly 1/2. Therefore,  the probability that $w.v_i=0$ given that $w.v_j=0$ for every $j<i$ is less than $1/(1.9)$. 

Since this is true for every $i\leq t-1$, we obtain the result. 
\end{proof}

\begin{proof}[Proof of Lemma \ref{fourier}]
	Suppose that the result is not true. Let $r$ be a positive integer to be specified later and pick $R=\{u_1,\dots,u_r\}$ such that $\widehat{\mu_{B'}}(u_i)\geq 0.98$ for $i=1,2,\dots,r$. Then for each $i$, we have $u_i\cdot w=0$ for at least 99\% of all $w\in B'$. Therefore there is a subset $B''\subset B'$ with $|B''|\geq |B'|/2$ such that each $w\in B''$ has $u_i\cdot w=0$ for at least 98\% of the $u_i$. The number of subsets of $R$ of size $\frac{49}{50}r$ is at most ${r \choose 49r/50}={r \choose r/50}\leq (50e)^{r/50}\leq (1.8)^{49r/50}$. Let $t=49r/50$. Then there exists a subset $T$ of $R$ of size $t$ such that the number of $w\in B$ with $w\cdot u=0$ for all $u\in T$ is at least $\frac{|B''|}{(1.8)^t}\geq \frac{\delta}{2\cdot (1.8)^t}|B|$. Choose the smallest positive integer $t$ with $\frac{\delta}{2\cdot (1.8)^t}\geq (1.9)^{-(t-1)}$ (and with $r=50t/49$ an integer). Then the density of those $w\in B$ with $w\cdot u=0$ for all $u\in T$ is at least $(1.9)^{-(t-1)}$.
	
Now let $Q$ be the set of all $u\in \mathbb{F}_2^n$ with $\hat{\mu}_{B'}(u)\geq 0.98$ and assume that $|Q|\geq \exp(n^{2/3})$. Let $t$ and $r$ be as above and choose $u_1,\dots,u_r\in Q$ such that for every $j$, the (Hamming) distance of $u_j$ from $\text{span}(u_1,\dots,u_{j-1})$ is at least $n^{8/15}$. (This is possible because the number of $u\in \mathbb{F}_2^n$ with Hamming distance at most $n^{8/15}$ from an $r$-dimensional vector space is at most $2^r \exp(O(n^{8/15}\log n))<\exp(n^{2/3})$.) Applying Corollary \ref{Claim1} to $V=\text{span}(T)$, where $T$ is a subset of $\{u_1,\dots,u_r\}$ of size $t$, we find that the density of those $w\in B$ with $w\cdot u=0$ for all $u\in T$ is less than $(1.9)^{-(t-1)}$, which is a contradiction. 
\end{proof}

\begin{proof}[Proof of Lemma \ref{compatible}]
	In this proof, unless specified otherwise, we will view $\mathbb{F}_2^n$ as a subset of $\mathbb{R}^n$ and accordingly, the dot product is defined as $u\cdot w=\sum_i u(i)w(i)$ where the summation is in $\R$. Then $|u+w|\leq |u|$ is equivalent to $u\cdot w\geq |w|/2$. Hence $u$ is $B'$-compatible if $u\cdot w\geq |w|/2$ for at least $|B'|/3$ vectors $w\in B'$.
	
	Let $t$ be a fixed positive integer, not depending on $n$, to be specified later. For a multiset $T=\{u_1,\dots,u_t\}\subset A'$ write $s_T=\sum_{i=1}^tu_i-\frac{t}{2}q$ where $q$ is the vector in $\F_2^n$ consisting of ones. Let $a_k=s_T(k)$ and $\sigma_T^2=\sum_{k=1}^n a_k^2$. We say that $T$ is \emph{bad} if $\sigma_T^2\geq 1000tn$.
	
	\smallskip
	
	\noindent \emph{Claim 1.} If $T$ is not bad, then the number of $w\in B$ with $u_i\cdot w\geq |w|/2$ for all $i$ is at most $\frac{|B|}{100^t}$.
	
	\smallskip
	
	\noindent \emph{Proof of Claim 1.} If $u_i\cdot w\geq |w|/2$ for all $i$, then $s_T\cdot w\geq 0$. Note that $s_T\cdot w=\sum_{k\leq n} a_kw(k)$. We shall view $w$ as a random variable, chosen uniformly of all elements of $B$. What we need to prove is that $\P\lbrack \sum_{k\leq n} a_kw(k)\geq 0\rbrack \leq \frac{1}{100^t}$.
	
	Let $m=n^{1/2}$ and let $w_1,\dots,w_m$ be standard basis vectors of $\F_2^n$, chosen independently and uniformly at random. Note that the expected number of $i\neq j$ such that $w_i=w_j$ is at most 1, so almost surely this number is at most $\log n$. In particular, almost surely we have $n^{1/2}-2\log n\leq |w_1+\dots+w_m|\leq n^{1/2}$. Choose uniformly randomly an element $w\in B$ with minimal Hamming distance from $w_1+\dots+w_m\in \F_2^n$. This algorithm defines a uniformly random element of $w\in B$ such that almost surely we have $\sum_{k\leq n}|\sum_{i\leq m}w_i(k)-w(k)|\leq \sum_{k\leq n}|\sum_{i\leq m}w_i(k)-(\sum_{i\leq m}w_i)(k)|+\sum_{k\leq n}|(\sum_{i\leq m}w_i)(k)-w(k)| \leq 2\log n+2\log n= 4\log n$, where all the summations are taken in $\R$, except $\sum_{i\leq m} w_i$, which is taken in $\F_2^n$.

	At this point, we apply the following version of Chernoff's inequality, which appears as Theorem 3.4 in \cite{chunglu}.
	
	\smallskip
	
	Let $X_i$ ($1 \leq i \leq m$) be independent random variables satisfying $X_i \leq \E \lbrack X_i\rbrack + M$, for $1 \leq i \leq m$. We consider the sum $X=\sum_i X_i$
	with expectation $\E\lbrack X\rbrack = \sum_i \E\lbrack X_i\rbrack$ and variance $\text{Var}(X) = \sum_i \text{Var}(X_i)$. Then, we have $\P( X \geq \E\lbrack X\rbrack + \lambda) \leq \exp(-\frac{\lambda^2}{2(\text{Var}(X)+M\lambda/3)})$.
	
	\smallskip
	
	We now take $X_i=\sum_{k\leq n} a_kw_i(k)$ for $1\leq i\leq m$. Since $|a_k|\leq t$, the conditions of the theorem hold with $M=2t$. As $u_i\in A'$ for all $i$, we have $\sum_{k\leq n} a_k\leq t(n/2-10^{15}n^{3/4})-tn/2=-10^{15}tn^{3/4}$. Then $\E\lbrack X\rbrack=m\frac{\sum_{k\leq n} a_k}{n} \leq -\frac{1}{2}10^{15}tn^{1/4}$, and by the assumption that $T$ is not bad, $\text{Var}(X)\leq m \frac{\sum_{k\leq n} a_k^2}{n}\leq 1000tn^{1/2}$. Thus, taking $\lambda=10^{14}tn^{1/4}$ in the above theorem it follows that $$\P\lbrack \sum_{i\leq m}\sum_{k\leq n} a_kw_i(k)\geq -10^{14}tn^{1/4}\rbrack \leq \exp\bigg(-\frac{10^{28}t^2n^{1/2}}{2(1000tn^{1/2}+2t\cdot10^{14}tn^{1/4}/3)}\bigg)\leq \frac{1}{2\cdot 100^t}$$ But
	\begin{align*}
		\sum_{k\leq n}a_kw(k) &= \sum_{i\leq m,k\leq n}a_kw_i(k)+\sum_{k\leq n}a_k(w(k)-\sum_{i\leq m}w_i(k)) \\
		&\leq \sum_{i\leq m,k\leq n}a_kw_i(k)+t\sum_{k\leq n}|(w(k)-\sum_{i\leq m}w_i(k))|,
	\end{align*} and $\sum_{k\leq n}|(w(k)-\sum_{i\leq m}w_i(k))|\leq 4\log n$ almost surely, it follows that $\mathbb{P} \lbrack\sum_{k\leq n}a_kw(k)\geq 0 \rbrack\leq \frac{1}{100^t}$ and Claim 1 is proved.

	\smallskip
	
	\noindent \emph{Claim 2.} If $u_1,u_2,\dots,u_t$ are independently and uniformly randomly chosen elements of $A'$ then the probability that $T=\{u_1,\dots,u_t\}$ is bad is $o(\exp(-n^{7/8}))$.
	
	\smallskip
	
	\noindent \emph{Proof of Claim 2.} Recall that $T$ is bad if and only if $\sum_{k\leq n}(\sum_{i\leq t}u_i(k)-t/2)^2\geq 1000tn$. $u_1,\dots,u_t$ are randomly chosen from $A'$ but with probability $1-o(\exp(n^{-7/8}))$ all of them have $|u_i|\geq n/2-n^{99/100}$ so we may assume that $u_1,\dots,u_t$ are randomly chosen from the set $A''=\{v\in \F_2^n: n/2-n^{99/100}\leq |v|\leq n/2-10^{15}n^{3/4}\}$. It is not hard to see that we can write $u_i=x_i+y_i$ where $x_i$ and $y_i$ are random variables taking values in $\F_2^{n}$ and having the property that $x_i(k)$ are independent Bernoulli with parameter $1/2$ and $|y_i|\leq 2n^{99/100}$ with probability $1-o(\exp(-n^{7/8}))$. Then it suffices to prove that $$\P\lbrack\sum_{k\leq n}(\sum_{i\leq t}x_i(k)-t/2)^2\geq 500tn\rbrack=o(\exp(-n^{7/8}))$$
	Let $X_i=(\sum_{i\leq t} x_i(k)-t/2)^2$ for $1\leq k\leq n$. Then the $X_i$ are iid random variables with $\E \lbrack X_i\rbrack=t/4$ and $\text{Var}(X_i)=O(1)$. Thus, by Theorem 3.4 from \cite{chunglu} (which is the theorem stated above), taking $\lambda=100tn$ and $M=t^2$, it follows that $$\P\lbrack\sum_{k\leq n}(\sum_{i\leq t}x_i(k)-t/2)^2\geq 500tn\rbrack\leq \exp\bigg(-\frac{(100tn)^2}{2(nO(1)+t^2\cdot100tn/3)}\bigg)= o(\exp(-n^{7/8})),$$ finishing the proof of Claim 2.
	
	\smallskip
	
	We are now in a position to complete the proof of the lemma. Let $t$ be the smallest positive integer with $\frac{1}{100^t}<\frac{\d}{100(6e)^t}$. Let the density of $B'$-compatible elements in $A'$ be $\alpha$. Pick $v_1,\dots,v_{6t}$ independently and uniformly randomly from $A'$. Then with probability $\alpha^{6t}$, every $v_i$ is $B'$-compatible. If that is the case, then for every $i$, there are at least $|B'|/3$ vectors $w\in B'$ with $v_i\cdot w\geq |w|/2$. It follows that there is some $B''\subset B'$ with $|B''|\geq |B'|/100$ such that for every $w\in B''$ we have $v_i\cdot w\geq |w|/2$ for at least $t$ choices of $i$. The number of $t$-sets in $\{v_1,\dots,v_{6t}\}$ is at most $(6e)^{t}$ so there must exist a $t$-set $T=\{u_1,\dots,u_t\}\subset \{v_1,\dots,v_{6t}\}$ (multisets are allowed) such that the number of $w\in B$ with $u_i\cdot w\geq |w|/2$ for each $i$ is at least $|B''|/(6e)^{t}\geq\frac{\delta|B|}{100(6e)^{t}}> \frac{|B|}{100^t}$. By Claim 1, it follows that $T$ is bad. Thus, the probability that $T=\{u_1,\dots,u_t\}$ is bad when $u_1,\dots,u_t$ are independently and uniformly randomly chosen from $A'$, is at least $\frac{\a^{6t}}{{6t \choose t}}$. Hence, by Claim 2, we have $\frac{\a^{6t}}{{6t \choose t}}=o(\exp(-n^{7/8}))$, and we get $\alpha=o(\exp(-n^{3/4}))$.
\end{proof}

\thebibliography{99}

\bibitem{barak} B. Barak, P. K. Kothari and D. Steurer, \emph{Small-Set Expansion in Shortcode Graph and the 2-to-2 Conjecture}, arXiv preprint, \href{https://arxiv.org/abs/1804.08662}{arXiv:1804.08662}

\bibitem{chunglu} F. Chung and L. Lu, \emph{Concentration inequalities and martingale inequalities: a survey}, Internet Mathematics, 3(1), pp.79-127. (2006)

\bibitem{green} B. J. Green, \emph{The asymmetric Balog-Szemer\'edi-Gowers theorem}, \href{http://people.maths.ox.ac.uk/greenbj/papers/asym-BSG.pdf}{unpublished lecture notes}

\bibitem{janzer} O. Janzer, \emph{Low analytic rank implies low partition rank for tensors}, arXiv preprint

\bibitem{khot} S. Khot, \emph{On the power of unique 2-prover 1-round games}, STOC '02 Proceedings of the thirty-fourth annual ACM symposium on theory of computing (2002), pp. 767-775.

\bibitem{KMS} S. Khot, D. Minzer and M. Safra, \emph{Pseudorandom sets in Grassman graph have near-perfect expansion}, Electronic Colloquium on Computational Complexity, Report no. 6 (2018).

\bibitem{lovett} S. Lovett, \emph{The analytic rank of tensors and its applications}, \href{https://arxiv.org/abs/1806.09179}{arXiv:1806.09179}

\bibitem{taovu} T. C. Tao and V. Vu, \emph{Additive combinatorics}, Cambridge studies in advanced
mathematics \textbf{105}, CUP 2006.

\end{document}